\newcommand{\p}{\partial}
\def\d{\ensuremath{\mathrm{d}}}
\newcommand{\dps}{\displaystyle}
\newcommand{\pd}[2]{\dfrac{\partial #1}{\partial #2}}
\newcommand{\norm}[1]{\left\|#1\right\|}
\newcommand{\xx}{{\mathbf{x}}}
\newcommand{\yy}{{\mathbf{y}}}
\newcommand{\zz}{{\mathbf{z}}}
\newcommand{\nn}{{\mathbf{n}}}
\newcommand{\RR}{{\mathbb{R}}}
\newcommand{\Rd}{{R_{\delta}(\xx,\yy)}}
\newcommand{\bRd}{{\bar{R}_{\delta}(\xx,\yy)}}
\newcommand{\bbRd}{{\bar{\bar{R}}_{\delta}(\xx,\yy)}}
\newcommand{\wdd}{w_\delta}
\newcommand{\bwd}{\bar{w}_\delta}
\newcommand{\ONL}{{\Omega_{NL}}}
\newcommand{\OL}{{\Omega_{L}}}
\newcommand{\uNL}{u_{NL}}
\newcommand{\uL}{u_{L}}
\newcommand{\uG}{u_{\Gamma}}
\newcommand{\vNL}{v_{NL}}
\newcommand{\vL}{v_{L}}
\newcommand{\eG}{e_{\Gamma}}
\newcommand{\eNL}{e_{NL}}
\newcommand{\eL}{e_{L}}
\newcommand{\bbuNL}{\bar{\bar{u}}_{NL}}
\newcommand{\buNL}{\bar{u}_{NL}}
\newcommand{\bbeNL}{\bar{\bar{e}}_{NL}}
\newcommand{\vG}{v_{\Gamma}}
\newenvironment{equationa*}{\begin{equation*}\begin{aligned}} {\end{aligned}\end{equation*}}
\begin{document}

\title{A seamless local-nonlocal coupling diffusion model with $H^1$ vanishing nonlocality convergence \thanks{This work was supported by National Natural Science Foundation of China (NSFC) 12071244, 92370125.}}


\author{
Yanzun Meng
\thanks{Department of Mathematical Sciences, Tsinghua University 
Beijing, China, 100084. \textit{Email: myz21@mails.tsinghua.edu.cn}
}
\and
Zuoqiang Shi
\thanks{Corresponding Author, Yau Mathematical Sciences Center, Tsinghua University 
Beijing, China, 100084. \&
Yanqi Lake Beijing Institute of Mathematical Sciences and Applications
 Beijing, China, 101408. \textit{Email: zqshi@tsinghua.edu.cn}}
}

\maketitle

\begin{abstract}
	Based on the development in dealing with nonlocal boundary conditions, 
	we propose a seamless local-nonlocal coupling diffusion model in this paper.  
	In our model, a finite constant interaction horizon is equipped in the nonlocal part and transmission conditions are imposed on a co-dimension one interface. 
	To achieve a seamless coupling, we introduce an auxiliary function to merge the nonlocal model with the local part and design a proper coupling transmission condition to ensure the stability and convergence. In addition, by introducing bilinear form, well-posedness of the proposed model can be proved and convergence to a standard elliptic transmission model with first order in $H^1$ norm can be derived. 
\end{abstract}

\begin{keywords}
  Local-nonlocal coupling model; Elliptic transmission problem; Well-posedness; Vanishing nonlocality convergence.
\end{keywords}

\begin{AMS}
	45A05, 35J25, 45P05, 46E35.
\end{AMS}

\section{Introduction}
Nonlocal models are widely used in a number of scientific and engineering fields. Compared with conventional local models, which use differential operators 
to describe some mechanisms under rigorous regularity assumptions, nonlocal models introduce integral operators to characterize more singular phenomenons. For instance, 
in peridynamics \cite{askari2008peridynamics,oterkus2012peridynamic,silling2010crack}, nonlocal models works effectively when there are fracture, mixture or defect in the materials. Additionally, in the context of diffusion \cite{bucur2016nonlocal,vazquez2012nonlinear}, nonlocal models can also describe some anomalous condition. 
Beyond modeling physical system, nonlocal models also attract attentions in some emerging field, such as semi-supervised learning \cite{shi2017weighted,tao2018nonlocal,wang2018non} and imaging process \cite{osher2017low}.

While nonlocal models show their advantages in characterizing complicated mechanisms and improving accuracy in some tasks, the computational cost of solving nonlocal problem is much higher than solving its local counterpart. Nevertheless, the singularity part, which have to be handled with nonlocal models, can often be confined in a small patch that can be identified from the regular part. Therefore, it is natural to use nonlocal models only in 
the singular subdomain and leave the remaining part being described by local model, which is usually partial differential equations. Based on this idea, we can expect a local-nonlocal coupling model to combine accuracy and computational efficiency.

However, it is definitely not simple to couple the distinctly different local and nonlocal descriptions. In fact, compared with classical partial differential equations, it is inconvenient to impose boundary conditions in nonlocal models. In general, a parameter in nonlocal models, which is usually called interaction horizon, should be properly selected according to the specific problem. 
Some information in the interaction horizon may be missing near the boundary. Unintended error will be introduced without proper boundary condition \cite{du2015integral}. 
Therefore, some elaborate designs should be proposed when merging the local and nonlocal models.

A number of attempts have achieved success in the past decades.  In essence, coupling methods are based on the approaches to impose boundary conditions in nonlocal models. 
One popular way prescribes a nonlocal analogue of boundary condition, which is usually called volumetric constraints \cite{du2012analysis}. More information on a collar surrounding the domain is equipped in nonlocal model. 
Thus, in the context of local-nonlocal coupling, the transmission conditions are imposed in a transition region rather than on the interface. 
In \cite{d2021formulation,d2016coupling}, an optimization-based coupling strategy is proposed. 
This method preset the boundary data in the transition region as the variables to be optimized. 
With these information, both nonlocal and local problem can be solved independently with the existing methods.
The coupling system is ultimately achieved by minimize the error in the overlap region of local and nonlocal part via selecting the optimal preset data. 
The well-posedness and convergence analysis of this optimization-based method are provided in \cite{d2021formulation}, but the convergence depends on the thickness of the transition region. 
Following the success in coupling two nonlocal models with different interaction horizons \cite{li2017quasi}, a novel quasi-nonlocal coupling method are proposed in \cite{du2018quasi} to merge a nonlocal diffusion model with its local counterpart. 
The prior work \cite{li2017quasi} adopts the idea of geometric reconstruction \cite{weinan2006uniform} in the subregion dominated by a smaller horizon. Since the local model can be seen as its nonlocal counterpart with zero interaction horizon, \cite{du2018quasi} extends the geometric reconstruction in local part to make the models consistent. 
Nevertheless, the rigorous analysis is limited to one dimension in \cite{du2018quasi}. 
To avoid the transmission via transition region, \cite{you2020asymptoticallycoupling} gives a partitioned coupling framework in dynamic diffusion problem and shows the convergence numerically. 
Although the boundary data is transmitted only at the interface, solving the nonlocal part still need volumetric constraints in \cite{you2020asymptoticallycoupling}. In order to achieve the intrinsic seamless coupling, spatially varying horizon is introduced. If the nonlocal model is equipped with a shrinking horizon as the points approach the boundary, a trace theorem for the corresponding nonlocal space can be established in \cite{tian2017trace}. 
Once the trace is well-defined, the subsequent coupling method \cite{tao2019nonlocal} naturally follows.  In addition, besides the success in nonlocal diffusion as mentioned above, similar coupling strategies can also be applied in nonlocal mechanics \cite{bobaru2016handbook,yu2018partitioned,han2012coupling,wang2019concurrent,lubineau2012morphing,han2016morphing,seleson2013force,seleson2015concurrent}.

In this paper, we illustrate our local-nonlocal coupling diffusion model via approximating elliptic transmission problem. The transmission conditions of this problem include Neumann and Dirichlet constraints on the interface.
In fact, besides volumetric constraints and shrinking horizon, boundary conditions can be imposed via modifying the original nonlocal operators.  
For nonlocal diffusion model, point integral method \cite{li2017point,shi2017convergence} uses the following equation 
\begin{align*}
  \dfrac{2}{\delta^2}\int_{\Omega} \Rd (u(\xx)-u(\yy))\d\xx-\int_{\partial\Omega}\bRd \pd{u}{\nn}(\yy)\d S_\yy=\int_{\Omega}\bRd f(\yy)\d\yy
\end{align*}
to approximate the Poisson equation with Neumann boundary condition. Where $R_\delta$, $\bar{R}_\delta$ are integral kernels defined in Section \ref{sec:Local-nonlocal coupling model and results} and $f$ is the source term in Poisson equation. 
Based on this method, we introduce an auxiliary function to cover the Neumann data and design an additional constraint on the interface to force the system satisfying the Dirichlet continuity constraint.

The rest of this paper is organized as following. In Section \ref{sec:Elliptic transmission problem}, we recall the configuration of elliptic transmission problem. 
Some notations about our nonlocal model are introduced in Section \ref{sec:Local-nonlocal coupling model and results}. More importantly, we derive our local-nonlocal model and give the main results in this section.
The well-posedness of our model is proved in Section \ref{sec:well-posedness} and the convergence analysis is provided in Section \ref{sec:convergence proof}.
\section{Elliptic transmission problem}
\label{sec:Elliptic transmission problem}
In this paper, we assume $\Omega \subset \RR^n$ is an open, bounded domain, and $\p \Omega$ is smooth enough. 
As shown in Fig \ref{Region}, there is a closed $(n-1)$-dimensional smooth manifold $\Gamma\subset \Omega$ splitting $\Omega$ into two regions. 
The region enclosed by $\Gamma$ is denoted as $\ONL$ and another one is denoted as $\OL$.  
\begin{figure}[h]
\centering
\tikzset{every picture/.style={line width=0.75pt}} 

\begin{tikzpicture}[x=0.75pt,y=0.75pt,yscale=-1,xscale=1]

\draw   (162,79.4) .. controls (162,53.77) and (182.77,33) .. (208.4,33) -- (464.6,33) .. controls (490.23,33) and (511,53.77) .. (511,79.4) -- (511,218.6) .. controls (511,244.23) and (490.23,265) .. (464.6,265) -- (208.4,265) .. controls (182.77,265) and (162,244.23) .. (162,218.6) -- cycle ;
\draw   (259,96) .. controls (291,74) and (377,65) .. (407,91) .. controls (437,117) and (456,165) .. (429,197) .. controls (402,229) and (300,229) .. (257,202) .. controls (214,175) and (227,118) .. (259,96) -- cycle ;

\draw (195,63.4) node [anchor=north west][inner sep=0.75pt]    {$\Omega _{L}$};
\draw (422,200.4) node [anchor=north west][inner sep=0.75pt]    {$\Gamma $};
\draw (325,140.4) node [anchor=north west][inner sep=0.75pt]    {$\Omega _{NL}$};
\draw (515,138.4) node [anchor=north west][inner sep=0.75pt]    {$\partial \Omega $};	
\end{tikzpicture}
\caption{A two-dimensional example for our problem. Nonlocal and local model will be applied in $\ONL$ and $\OL$ respectively in our coupling model. The transmission interface is denoted as $\Gamma$. 
Homogeneous Neumann boundary condition is imposed on $\p \Omega$ to simplify the system.}
\label{Region}
\end{figure}
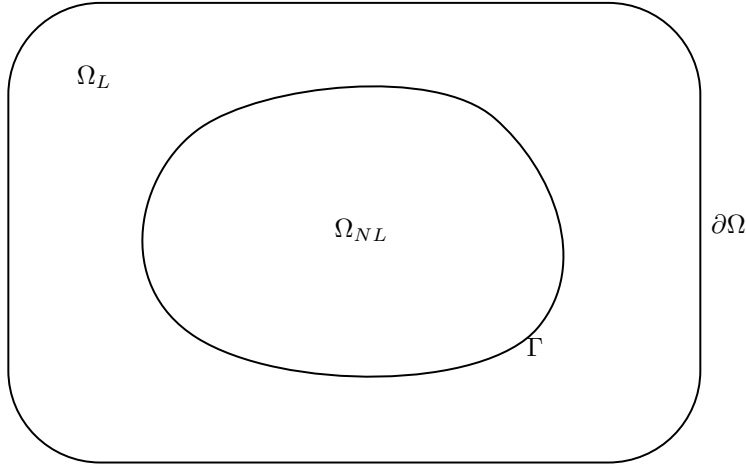 
The configuration of an elliptic transmission problem is in $\Omega=\OL\cup\Gamma\cup \ONL$. In detail, two Poisson equations with different coefficients are imposed in $\OL$ and $\ONL$ respectively. On the interface $\Gamma$, transmission conditions are given to determine the system. Mathematically, the system can be depicted as 
\begin{equation}
\label{original system}
\left\{
\begin{aligned}
  &-\lambda_1\Delta u(\xx)=f(\xx),\quad\xx\in \OL;\\
  &-\lambda_2\Delta u(\xx)=f(\xx),\quad\xx\in \ONL;\\
  &u^+(\xx)=u^{-}(\xx),\ \lambda_1{\pd{u}{\nn}}^+(\xx)=\lambda_2{\pd{u}{\nn}}^-(\xx),\quad \xx\in\Gamma;\\
  &\pd{u}{\nu}(\xx)=0;\quad\xx\in \p \Omega;\\
  &\int_{\Omega}u(\xx)\d\xx=0.
\end{aligned}
\right.
\end{equation}
In (\ref{original system}), $\lambda_1,\lambda_2$ are positive. $\nn(\xx)$ is the unit normal at $\xx\in \Gamma$. 
The direction of $\nn$ is specified from $\ONL$ to $\OL$ consistently. 
In other words, the vector field $\nn$ is the outward unit normal field of $\ONL$ but part of the inner normal field of $\OL$. 
In order to be compatible with the direction of $\nn$, the superscript of $u^+$ and $u^-$  means the limit to $\Gamma$ taken from $\OL$ and $\ONL$ respectively. The same meaning is also applicable to ${\frac{\p u}{\p \nn}}^+$ and ${\frac{\p u}{\p \nn}}^-$.
Additionally, to make the system solvable, a compatibility condition 
\begin{equation}
  \int_\Omega f(\xx)\d\xx=0
\end{equation}
is implied by the system.

This elliptic transmission problem has a weak form. That is to find $u\in H^1(\Omega)$ such that  
\begin{equation*}
  \lambda_1\int_{\OL}\nabla u(\xx)\cdot\nabla v(\xx)\d\xx+\lambda_2\int_{\ONL}\nabla u(\xx)\cdot\nabla v(\xx)\d\xx=\int_{\Omega}f(\xx)v(\xx)\d\xx, \quad\forall v\in H^1(\Omega).
\end{equation*}
Classical partial differential equation theory tells us $f\in H^m(\Omega)$ means there exists unique $u\in H^1(\Omega)\cap H^{m+2}(\OL)\cap H^{m+2}(\ONL)$ satisfying above weak form for $m\geq 0$. 

\section{Local-nonlocal coupling model and results} 
\label{sec:Local-nonlocal coupling model and results}
This section will start from some basic configuration in our nonlocal model. Then a local-nonlocal coupling model will be derived based on point integral method. The main results about this model will also be stated in this section. 
\subsection{Nonlocal kernels}
In this paper, the following assumptions are imposed to a function $R(r)$, which is used to define our nonlocal model.
\begin{itemize}
  \item (smoothness and nonnegativity) $R\in C^1([0,+\infty))$ and $R(r)\geq 0$;
  \item (compact support) $R(r)=0$, for $r\geq 1$;
  \item (nondegeneracy) $\exists \gamma_0>0$ such that $R(r)\geq \gamma_0$ for $0\leq r\leq \frac{1}{2}$.
\end{itemize}
Two functions derived from $R$ are defined as 
\begin{equation*}
  \bar{R}(r)=\int_{r}^{+\infty}R(s)\d s\ \text{ and }\  \bar{\bar{R}}(r)=\int_{r}^{+\infty}\bar{R}(s)\d s.
\end{equation*}
It is simple to verify these two functions also satisfy above assumptions. 
 
Let constant $\alpha_n$ give the normalization
\begin{equation}
  \label{kernel normalization}
  \int_{\RR^n}\alpha_n\delta^{-n}\bar{R}\left(\dfrac{\left|\xx-\yy\right|^2}{4\delta^2}\right)\d\yy=\alpha_n S_n\int_0^2\bar{R}(r^2/4)r^{n-1}\d r=1,
\end{equation}
where $S_n$ is the area of unit ball in $\RR^n$. With this constant, the integral kernels in our nonlocal model are defined as
\begin{equation*}
  \tilde{R}_{\delta}(\xx,\yy)=\alpha_n\delta^{-n}\tilde{R}\left(\dfrac{\left|\xx-\yy\right|^2}{4\delta^2}\right),
\end{equation*}
where $\tilde{R}$ refers to $R$, $\bar{R}$ or $\bar{\bar{R}}$. We list some basic estimations about these kernels, which are heavily used in the following analysis.
\begin{proposition}
  If $\tilde{R}$ refers to $R$, $\bar{R}$ or $\bar{\bar{R}}$, and a $C^2$ domain $U\subset\RR^n$ is open and bounded. We have the following estimations
  \begin{align*}
    C_1\leq \int_U\tilde{R}(\xx,\yy)\d\yy&\leq C_2,\quad \forall\xx\in \bar{U};\\
    \int_{\p U}\tilde{R}(\xx,\yy)\d S_\yy&\leq \frac{C_3}{\delta},\quad \forall\xx\in \bar{U};\\
    \int_{\p U}\tilde{R}(\xx,\yy)\d S_\yy&\geq \frac{C_4}{\delta},\quad \text{when } d(\xx,\p U)<\frac{\sqrt{2}}{2}\delta. 
  \end{align*}
  with $C_i, i=1,2,3,4$ are constants independed of $\delta$.
\end{proposition}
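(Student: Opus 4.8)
The plan is to prove the three estimates by reducing each one to a computation in normal (or local) coordinates and then exploiting the compact support of $\tilde R$. Throughout, recall that all three of $R$, $\bar R$, $\bar{\bar R}$ satisfy the smoothness, compact support, and nondegeneracy assumptions, and that the kernel $\tilde R_\delta(\xx,\yy)=\alpha_n\delta^{-n}\tilde R(|\xx-\yy|^2/(4\delta^2))$ vanishes once $|\xx-\yy|\ge 2\delta$. Hence every integral below is really over the intersection of $U$ (or $\p U$) with the ball $B(\xx,2\delta)$.

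\textbf{The volume estimate.} For the upper bound, extend the integral from $U$ to all of $\RR^n$ and use the normalization \eqref{kernel normalization}: since $\tilde R\le C\bar R$ pointwise for a suitable constant (because $\bar R\ge\bar{\bar R}$ near $0$ by nondegeneracy of $\bar R$, and $R$, $\bar R$ are bounded while $\bar R$ is bounded below on $[0,1/4]$), one gets $\int_U\tilde R_\delta(\xx,\yy)\d\yy\le C\int_{\RR^n}\bar R_\delta(\xx,\yy)\d\yy=C=:C_2$. For the lower bound, one needs that $U$ occupies a definite fraction of the ball $B(\xx,\delta/2)$ uniformly in $\xx\in\bar U$; this is where the $C^2$ (interior and exterior ball/cone) regularity of $\p U$ enters. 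Using the nondegeneracy $\tilde R(r)\ge\gamma_0$ for $0\le r\le 1/2$, i.e. $\tilde R_\delta(\xx,\yy)\ge\alpha_n\delta^{-n}\gamma_0$ for $|\xx-\yy|\le\sqrt{2}\,\delta$, one bounds $\int_U\tilde R_\delta(\xx,\yy)\d\yy\ge\alpha_n\delta^{-n}\gamma_0\,|U\cap B(\xx,\sqrt2\,\delta)|\ge\alpha_n\gamma_0\,c_0\,\omega_n(\sqrt2)^n=:C_1>0$, where $c_0>0$ is the uniform interior density constant coming from the uniform interior cone condition of a $C^2$ domain.

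\textbf{The surface estimates.} Fix $\xx\in\bar U$. If $d(\xx,\p U)\ge 2\delta$ the surface integral is zero and the upper bound is trivial, so assume $d(\xx,\p U)<2\delta$. Because $\p U$ is $C^2$ and compact, there is $r_0>0$ such that on the tubular neighborhood $\{d(\cdot,\p U)<r_0\}$ one has a diffeomorphism onto $\p U\times(-r_0,r_0)$ with Jacobian bounded above and below; taking $\delta$ small enough that $2\delta<r_0$, parametrize $\p U\cap B(\xx,2\delta)$ by a bounded number of $C^2$ graph patches over tangent hyperplanes. In each patch the surface measure is comparable to $(n-1)$-dimensional Lebesgue measure, so $\int_{\p U}\tilde R_\delta(\xx,\yy)\d S_\yy\le C\alpha_n\delta^{-n}\int_{|\yy'-\xx'|\le 2\delta}\d\yy'\le C\alpha_n\delta^{-n}\cdot C'\delta^{n-1}=C_3/\delta$, giving the third displayed inequality. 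For the lower bound, suppose $d(\xx,\p U)<\frac{\sqrt2}{2}\delta$ and let $\xx^*\in\p U$ be a nearest point. Then for all $\yy\in\p U$ with $|\yy-\xx^*|\le\delta/2$ we have $|\yy-\xx|\le|\yy-\xx^*|+|\xx^*-\xx|<\delta/2+\sqrt2\delta/2=(1+\sqrt2)\delta/2<\sqrt2\,\delta$ — wait, one wants $|\yy-\xx|\le\sqrt2\,\delta$ to invoke nondegeneracy, and indeed $(1+\sqrt2)/2<\sqrt2$, so this holds. On that geodesic disk the surface area is $\ge c_1\delta^{n-1}$ uniformly (again by $C^2$ regularity), and there $\tilde R_\delta(\xx,\yy)\ge\alpha_n\delta^{-n}\gamma_0$, so $\int_{\p U}\tilde R_\delta(\xx,\yy)\d S_\yy\ge\alpha_n\delta^{-n}\gamma_0\,c_1\delta^{n-1}=C_4/\delta$.

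\textbf{Main obstacle.} The only genuinely nontrivial point is making the geometric constants uniform in $\xx$: one must show that a $C^2$ bounded domain satisfies a uniform interior cone (or ball) condition so that $|U\cap B(\xx,\sqrt2\,\delta)|\ge c_0\delta^n$ for every $\xx\in\bar U$, and that the piece of $\p U$ inside a ball of radius $\mathcal O(\delta)$ centered near $\p U$ has surface measure comparable to $\delta^{n-1}$ from both sides, with constants depending only on $U$ and not on $\delta$ or the base point. This is a standard consequence of compactness of $\p U$ together with its $C^2$ regularity (finite atlas of graph patches with uniformly controlled $C^2$ norms), but it is the step that requires care; everything else is a change of variables plus the support and nondegeneracy properties of $\tilde R$. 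I would state the uniform cone/ball property as a preliminary geometric lemma and then the three estimates follow in a few lines each.
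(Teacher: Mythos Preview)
Your proposal is correct and follows the standard approach. Note, however, that the paper itself does not supply a proof of this proposition: it is stated as a list of ``basic estimations'' and then used freely throughout, so there is no paper proof to compare against. Your sketch---extend to $\RR^n$ for the volume upper bound, invoke nondegeneracy plus a uniform interior-cone/density property for the volume lower bound, and flatten $\p U$ in local graph coordinates to get the two surface estimates---is exactly the routine argument one would expect, and your identification of the only nontrivial point (uniformity of the geometric constants via compactness of $\p U$ and a finite $C^2$ atlas) is accurate. One minor cleanup: for the volume upper bound you do not need the comparison $\tilde R\le C\bar R$; a direct change of variables $\zz=(\yy-\xx)/\delta$ gives $\int_{\RR^n}\tilde R_\delta(\xx,\yy)\d\yy=\alpha_n\int_{\RR^n}\tilde R(|\zz|^2/4)\d\zz$, which is a finite constant for each of $R,\bar R,\bar{\bar R}$.
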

\subsection{Local-nonlocal coupling model}
Point integral method \cite{li2017point,shi2017convergence} provides an integral equation about $u$, ${\frac{\partial u}{\partial \nn}}^+$ and $f$ to approximate Poisson equation. In $\ONL$, if $-\lambda_2\Delta u=f$, we can get  
\begin{align}
  \label{PIM}
  \dfrac{\lambda_2}{\delta^2}\int_{\ONL}\Rd (u(\xx)-u(\yy))\d\xx-2\lambda_2\int_{\Gamma}\bRd {\pd{u}{\nn}}^+(\yy)\d S_\yy\notag\\
  =\int_{\Omega_{NL}}\bRd f(\yy)\d\yy+r_{NL,1}(\xx), \ \xx\in\ONL,
\end{align}
where $r_{NL,1}(\xx)$ is the truncation error. In (\ref{PIM}), the normal derivative on the interface $\Gamma$ should be given, but it is missing in our problem. 
To deal with this issue, we introduce an auxiliary function $u_\Gamma$ defined on $\Gamma$. By replacing  ${\frac{\p u}{\p \nn}}^+$ with $u_\Gamma$ and ignoring the truncation error in (\ref{PIM}),
we can get an integral equation about a function $u_{NL}$ defined in $\ONL$, that is 
\begin{align}
  \dfrac{\lambda_2}{\delta^2}\int_{\ONL}\Rd (\uNL(\xx)-\uNL(\yy))\d\xx-2\lambda_2\int_{\Gamma}\bRd \uG(\yy)\d S_\yy\nonumber\\
  =\int_{\Omega}\bRd f(\yy)\d\yy, \quad \xx\in\ONL.\label{NLequ1}
\end{align}
Naturally, according to the transmission condition $\lambda_1{\frac{\p u}{\p \nn}}^+=\lambda_2{\frac{\p u}{\p \nn}}^-$, 
a Poisson equation with Neumann boundary condition 
\begin{equation}
\label{Leq}
\left\{\begin{aligned}
  -\lambda_1\Delta \uL(\xx)&=f(\xx),\quad\xx\in\OL;\\
  \lambda_1 \pd{\uL}{\nn}(\xx)&=\lambda_2 \uG(\xx),\quad\xx\in\Gamma,
\end{aligned}\right.
\end{equation}
should be imposed in $\OL$. Here $\frac{\p \uL}{\p \nn}$ is in fact the inward normal derivative for $\OL$.

For the additional introduced auxiliary function $\uG$, one extra equation on the interface is necessary. 
The idea is to reapply point integral method for $\xx\in \Gamma$ with different kernels, i.e. 
\begin{align*}
  \lambda_2\int_{\ONL}\bRd (u(\xx)-u(\yy))\d\xx-2\lambda_2\delta^2\int_{\Gamma}\bbRd {\pd{u}{\nn}}^+(\yy)\d S_\yy\notag\\
  =\delta^2\int_{\Omega_{NL}}\bbRd f(\yy)\d\yy+\delta^2\tilde{r}_{NL,1}(\xx), \ \xx\in\Gamma,
\end{align*}
It is reasonable to cut the right-hand side to be $0$ in above equation as $\delta$ is small. Additionally, in the left-hand side,
to derive the coercivity in later analysis, we move the normal derivative out of the integral in the second term to get the equation about $\uNL$ and $\uG$,
\begin{align}
  \lambda_2\int_{\ONL}\bRd (u_{NL}(\xx)-u_{NL}(\yy))\d\yy-2\lambda_2\delta^2\uG(\xx)\int_{\Gamma}\bbRd\d S_\yy=0,\quad \xx\in\Gamma,\label{interfaceequ1}
\end{align}
 Furthermore, to couple the local part and the nonlocal part, using the interface condition $u^+(\xx)=u^-(\xx),\;\xx\in \Gamma$, we get the equation
\begin{align}
  \lambda_2\int_{\ONL}\bRd (\uL(\xx)-\uNL(\yy))\d\yy-2\lambda_2\delta^2\uG(\xx)\int_{\Gamma}\bbRd\d S_\yy=0,\quad \xx\in \Gamma.\label{interfaceequ2}
\end{align}
It is notable that we will find the solution $\uL$ in $H^1(\OL)$ in the following sections, hence the interface value $\uL(\xx)$ makes sense, at least in the sense of trace.

In summary, our local-nonlocal coupling model is an integral equation system about three functions $u_L$, $u_{NL}$ and $u_\Gamma$. If we denote 
\begin{equation}
\label{notation1}
\begin{aligned}
  &{w}_\delta(\xx)=\int_{\ONL}\Rd\d\yy,\qquad \bar{u}_{NL}(\xx)=\dfrac{1}{{w}_\delta(\xx)}\int_{\ONL}\Rd\uNL(\yy)\d\yy,\\
  &\bar{w}_\delta(\xx)=\int_{\ONL}\bRd\d\yy,\qquad \bbuNL(\xx)=\dfrac{1}{\bar{w}_\delta(\xx)}\int_{\ONL}\bRd\uNL(\yy)\d\yy,
\end{aligned}
\end{equation}
and 
\begin{equation}
	\label{right}
  \begin{aligned}
	&f_L(\xx)=f(\xx),\quad\quad f_{NL}(\xx)=\int_{\Omega}\bRd f(\yy)\d\yy+\bar{f},\\
  &\zeta_\delta(\xx)=\dfrac{2\delta^2}{\bar{w}_\delta(\xx)}\int_{\Gamma}\bbRd\d S_\yy,
  \end{aligned}
\end{equation}
our model can be written as 
\begin{equation}
\label{coupling system}
\left\{\begin{aligned}
&-\lambda_1 \Delta u_L(\xx)=f_L(\xx),\quad\xx\in \OL;\\
&\dfrac{\lambda_2}{\delta^2}\int_{\ONL}\Rd (\uNL(\xx)-\uNL(\yy))\d\yy-\lambda_2\int_{\Gamma}\bRd \dfrac{u_\Gamma(\yy)}{\bar{w}_\delta(\yy)}\d S_\yy
=f_{NL}(\xx),\quad\xx\in\ONL;\\
&\lambda_1\pd{\uL}{\nn}(\xx)=\lambda_2\uG(\xx),\quad\xx\in\Gamma;\\
&-\lambda_2\uL(\xx)+\dfrac{\lambda_2}{\bar{w}_\delta(\xx)}\int_{\ONL}\bRd\uNL(\yy)\d\yy+\lambda_2\zeta_\delta(\xx)u_\Gamma(\xx)=0,\quad \xx\in\Gamma.
\end{aligned}\right.
\end{equation}
Notice that we modified some terms compared with (\ref{NLequ1}) and (\ref{interfaceequ2}). 
Firstly, we divided $-\bar{w}_\delta(\xx)$ in (\ref{interfaceequ2}) and replaced the constant coefficient $2\lambda_2$ by a weight function $\frac{\lambda_2}{\bwd(\yy)}$ in (\ref{NLequ1}). 
These two modifications will help us eliminate the cross terms in the bilinear form which will be presented in Section \ref{sec:well-posedness}.
Moreover, the system (\ref{coupling system}) also implies a compatibility condition 
\begin{equation}
\label{compatibility condition}
\int_{\OL} f_{L}(\xx)\d\xx+\int_{\ONL} f_{NL}(\xx)\d\xx=0.
\end{equation}
This equation is achieved by adding a constant $\bar{f}$ in the right-hand side of (\ref{NLequ1}). The estimation of $\left|\bar{f}\right|$ is crucial in the following analysis of well-posedness and convergence. We state the results about this constant here.
\begin{lemma}
\label{LEMMA:F}
Let the auxiliary constant $\bar{f}$ satisfy 
\begin{align*}
  \int_{\OL}f(\xx)\d\xx+\int_{\ONL}\left(\int_{\ONL}\bRd f(\yy)\d\yy+\bar{f}\right)\d\xx=0.
\end{align*}
(1) If $f\in L^2(\Omega)$, we have $\left|\bar{f}\right|\leq C\norm{f}_{L^2(\Omega)}$.\\
(2) If $f\in H^1(\Omega)$, we have $\left|\bar{f}\right|\leq C\delta\norm{f}_{H^1(\Omega)}$.\\
Here $C$ is a constant independent of $\delta$.
\end{lemma}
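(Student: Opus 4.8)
The plan is to solve the relation defining $\bar f$ in closed form, recognize the outcome as an integral of $f$ against the thin layer near $\Gm$ on which the nonlocal weight $\bwd$ drops below its full‑space value $1$, and then estimate that layer integral --- crudely for~(1), and with one full power of $\delta$ for~(2). First I would simplify the defining equation: by Fubini and the symmetry $\bRd=\bar R_\delta(\yy,\xx)$ of the kernel, $\int_{\ONL}\bigl(\int_{\ONL}\bRd f(\yy)\,\d\yy\bigr)\d\xx=\int_{\ONL}\bwd(\yy)\,f(\yy)\,\d\yy$, so the relation reads $\int_{\OL}f+\int_{\ONL}\bwd f+|\ONL|\bar f=0$; invoking the compatibility condition $\into f(\xx)\,\d\xx=0$, i.e. $\int_{\OL}f=-\int_{\ONL}f$, this collapses to
\begin{equation*}
|\ONL|\,\bar f=\int_{\ONL}\bigl(1-\bwd(\yy)\bigr)f(\yy)\,\d\yy .
\end{equation*}

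Next I would record two elementary facts about the weight. From $\bRd\ge0$ and the normalization $\int_{\RRn}\bRd\,\d\xx=1$ one has $0\le\bwd\le1$ on $\ONL$; and since $\bar R$ vanishes on $[1,\infty)$, the kernel $\bRd$ is supported in $\{|\xx-\yy|<2\delta\}$, so $\bwd(\yy)=1$ whenever $d(\yy,\Gm)\ge 2\delta$ --- provided $\delta$ is small enough that a $2\delta$‑ball about such a $\yy$ still lies inside $\ONL$, which is possible because $\mathrm{dist}(\Gm,\p\Omega)>0$. Hence $1-\bwd$ is a nonnegative bump, bounded by $1$ and supported in the collar $U_\delta:=\{\yy\in\ONL:d(\yy,\Gm)<2\delta\}$, so that $|\ONL|\,|\bar f|\le\int_{U_\delta}|f(\yy)|\,\d\yy$. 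Part~(1) is then immediate: $\int_{U_\delta}|f|\le\int_{\ONL}|f|\le|\ONL|^{1/2}\norm{f}_{L^2(\Omega)}$, which even goes through without the compatibility condition.

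For part~(2) the core is the layer estimate $\int_{U_\delta}|f|\le C\delta\,\norm{f}_{H^1(\Omega)}$. I would parametrize $U_\delta$ near $\Gm$ by tubular coordinates $\yy=\mathbf{s}+t\,\nn(\mathbf{s})$ with $\mathbf{s}\in\Gm$ and $t\in(-2\delta,0)$, a diffeomorphism with Jacobian $1+O(\delta)$ once $\delta$ is below the reach of $\Gm$. For a.e.\ $\mathbf{s}$, writing $f(\mathbf{s}+t\,\nn)=f(\mathbf{s})+\int_0^t\p_\tau f(\mathbf{s}+\tau\,\nn)\,\d\tau$, one integrates $|f|$ \emph{first} in $t$ over an interval of length $2\delta$ --- this is where the factor $\delta$ is produced --- and then applies Cauchy--Schwarz in $\tau$ and afterwards in $\mathbf{s}$; the outcome is $\int_{U_\delta}|f|\le C\delta\,\norm{f}_{L^2(\Gm)}+C\delta^{3/2}\,\norm{\nabla f}_{L^2(\ONL)}$, and the trace inequality $\norm{f}_{L^2(\Gm)}\le C\norm{f}_{H^1(\ONL)}$ together with $\delta\le 1$ closes~(2).

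The routine parts are the first step (Fubini, kernel symmetry) and the normalization/support bookkeeping for the weight. \textbf{The one delicate point} is the layer estimate for~(2): the task is to extract the \emph{full} power $\delta$ rather than merely $\delta^{1/2}$. A naive Cauchy--Schwarz over the $n$‑dimensional collar would only give $|U_\delta|^{1/2}\norm{f}_{L^2(\Omega)}\sim\delta^{1/2}\norm{f}_{L^2(\Omega)}$; the improvement comes from integrating across the collar in the normal direction first, turning its $O(\delta)$ width into an honest factor $\delta$, and then controlling the trace of $f$ on $\Gm$. One must also keep every constant independent of $\delta$, which is precisely why $\delta$ has to be taken small relative to $\mathrm{dist}(\Gm,\p\Omega)$ and to the reach of $\Gm$.
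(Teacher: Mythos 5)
Your proposal is correct and follows essentially the same route as the paper: both use the compatibility condition $\int_\Omega f=0$ to rewrite $\bar f$ as (a constant times) $\int_{\ONL}(1-\bwd)f$, observe that $1-\bwd$ vanishes outside the $2\delta$-collar of $\Gamma$, and then obtain the key layer bound $\int_{\Gamma_{2\delta}}|f|\leq C\delta\,\norm{f}_{H^1}$ by integrating in the normal direction first (FTC plus Cauchy--Schwarz plus trace). The only cosmetic differences are that you present the reduction via Fubini and kernel symmetry while the paper changes variables directly, and that you parametrize the collar by tubular coordinates while the paper uses a finite ball cover with local boundary straightening (plus a density argument from $C^1(\overline{\ONL})$ to $H^1$); these are equivalent in substance and neither buys more generality or sharpness than the other.
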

The proof of above results can be found in Appendix \ref{appendix:proof1}.
\newpage
\subsection{Main results}
We firstly define a space
\begin{align*}
  \hat{H}=\biggl\{(\uL,\uNL,\uG):\uL\in H^1(\Omega_L),\uNL\in L^2(\Omega_{NL}),\uG\in L^2(\Gamma),\\
  \int_{\OL}u_L(\xx)\d\xx+\int_{\ONL}\uNL(\xx)\d\xx=0\biggr\}.
\end{align*}
Now we can give the well-posedness and convergence results about our local-nonlocal coupling model.
\begin{theorem}
  \label{THEOREM:WELL-POSEDNESS}
  Let $f\in L^2(\Omega)$, then our local-nonlocal coupling system (\ref{coupling system}) has a unique solution $(\uL,\uNL,\uG)\in \hat{H}$. Here we say $\uL$ solves (\ref{coupling system}) in 
  the sense of weak solution. Moreover, we have $\uNL\in H^1(\ONL)$ with the following estimation
  \begin{equation}
    \label{well-posedness estimation}
    \norm{\uL}_{H^1(\OL)}^2+\norm{\uNL}_{H^1(\ONL)}^2+\delta\norm{\uG}_{L^2(\Gamma)}^2\leq C\norm{f}_{L^2(\Omega)}^2,
  \end{equation}
  where the constant $C$ is independent of $\delta$.
\end{theorem}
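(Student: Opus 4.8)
The plan is to recast (\ref{coupling system}) as a variational problem on $\hat H$, solve it by combining an energy estimate with a compactness (Fredholm) argument, and then bootstrap the regularity of $\uNL$.

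First I would derive the weak formulation. Testing the first equation of (\ref{coupling system}) with $v_L\in H^1(\OL)$ and integrating by parts — using that the outward normal of $\OL$ along $\Gamma$ is $-\nn$, the homogeneous Neumann datum on $\p\Omega$, and the third equation $\lambda_1\pd{\uL}{\nn}=\lambda_2\uG$ — turns the boundary integral into $\lambda_2\intG\uG v_L\,\d S$. Testing the second equation with $v_{NL}\in L^2(\ONL)$ after symmetrising the double integral, and the fourth with $v_\Gamma\in L^2(\Gamma)$, then summing the three identities, produces a bilinear form $a$ and the functional $F(V)=\int_{\OL}f_L v_L+\int_{\ONL}f_{NL}v_{NL}$ on $\hat H$; the coupling part of $a$ is a $\uL$--$\uG$ block $\lambda_2\intG(\uG v_L-\uL v_\Gamma)\,\d S$ and a $\uNL$--$\uG$ block $\lambda_2\intG\frac{v_\Gamma(\xx)}{\bwd(\xx)}\int_{\ONL}\bRd\uNL(\yy)\,\d\yy\,\d S_\xx-\lambda_2\int_{\ONL}v_{NL}(\xx)\intG\bRd\frac{\uG(\yy)}{\bwd(\yy)}\,\d S_\yy\,\d\xx$. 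By the symmetry of $\bRd$, a relabelling $\xx\leftrightarrow\yy$, and the identity $\int_{\ONL}\bRd(\xx,\yy)\,\d\xx=\bwd(\yy)$ — which is exactly what the modifications of (\ref{NLequ1}) and (\ref{interfaceequ2}) were engineered to produce — both blocks are \emph{antisymmetric}, hence vanish on the diagonal:
\begin{equation*}
a(U,U)=\lambda_1\norm{\gd\uL}_{L^2(\OL)}^2+\frac{\lambda_2}{2\delta^2}\int_{\ONL}\int_{\ONL}\Rd\big(\uNL(\xx)-\uNL(\yy)\big)^2\,\d\xx\,\d\yy+\lambda_2\intG\zeta_\delta\,\uG^2\ \ge\ 0 .
\end{equation*}

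Next I would equip $\hat H$ with $\norm{U}^2=\norm{\uL}_{H^1(\OL)}^2+[\uNL]_\delta^2+\norm{\uNL}_{L^2(\ONL)}^2+\delta\Ltnorm{\uG}^2$, where $[\uNL]_\delta^2$ is the nonlocal seminorm in $a(U,U)$. Boundedness of $a$ and $F$ follows from the kernel estimates ($C_1\le\bwd\le C_2$, $\int_{\p\ONL}\bRd\le C_3/\delta$, hence $\zeta_\delta\le C\delta$), the trace inequality $\Ltnorm{v_L}\le C\norm{v_L}_{H^1(\OL)}$, and Lemma \ref{LEMMA:F} for the constant $\bar f$ in $f_{NL}$. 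Taking $V=U$, and using the lower bound $\zeta_\delta\ge c\delta$ on $\Gamma$ (the third kernel estimate, valid since $d(\xx,\Gamma)=0$), the Poincaré inequality on $\OL$, and the nonlocal Poincaré inequality $[\uNL]_\delta^2\ge c\,\norm{\uNL-m_{NL}}_{L^2(\ONL)}^2$ on the connected domain $\ONL$ ($m_{NL}=|\ONL|^{-1}\int_{\ONL}\uNL$, $c$ independent of $\delta$ for $\delta$ small), one controls $\norm{\gd\uL}_{L^2(\OL)}$, $[\uNL]_\delta$, $\delta^{1/2}\Ltnorm{\uG}$ and $\norm{\uNL-m_{NL}}_{L^2(\ONL)}$ by $\norm{f}_{L^2(\Omega)}$ plus one residual term $\norm{f}_{L^2(\Omega)}|m_{NL}|$ (it appears when $F(U)$ is split over the means, and the compatibility condition (\ref{compatibility condition}) together with the mean-zero constraint is used to recombine $\bar u_{L,\OL}\int_{\OL}f_L+m_{NL}\int_{\ONL}f_{NL}$). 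So everything comes down to bounding $|m_{NL}|$.

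I expect this last step to be the main obstacle. Averaging the pointwise fourth equation of (\ref{coupling system}) over $\Gamma$ expresses $|\Gamma|(\bar u_{L,\OL}-m_{NL})$ — which equals $-|\Gamma|\tfrac{|\Omega|}{|\OL|}m_{NL}$ by the constraint — through $\intG(\uL-\bar u_{L,\OL})$, $\intG(\bbuNL-m_{NL})$ and $\intG\zeta_\delta\uG$, so $|m_{NL}|\lesssim\norm{\uL-\bar u_{L,\OL}}_{L^2(\Gamma)}+\norm{\bbuNL-m_{NL}}_{L^2(\Gamma)}+\delta\Ltnorm{\uG}$. The first term is $\lesssim\norm{\gd\uL}_{L^2(\OL)}$ by trace and Poincaré and the third is already controlled, but a crude bound of the middle term only gives $\delta^{-1/2}\norm{\uNL-m_{NL}}_{L^2(\ONL)}$, which is too weak for a $\delta$-uniform estimate; removing this spurious $\delta^{-1/2}$ is the crux. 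I would do so by first proving $\delta$-uniform $H^1$-regularity of $\uNL$: the second equation can be rewritten as $\uNL=\buNL+\tfrac{\delta^2}{\lambda_2}\wdd(\xx)^{-1}\big(\lambda_2\intG\bRd\,\bwd(\yy)^{-1}\uG(\yy)\,\d S_\yy+f_{NL}\big)$ (with $\buNL,\bbuNL$ as in (\ref{notation1})), and each term lies in $H^1(\ONL)$ because $R\in C^1$, $\wdd,\bwd\in C^1$ are bounded below by $C_1>0$, and $f_{NL}$ is a convolution of an $L^2$ function with the $C^1$ kernel $\bRd$; the standard point-integral-method gradient analysis — using the odd-symmetry cancellation of $\int R'(\cdot)(\xx-\yy)\,\d\yy$ in the interior and handling the $O(\delta)$ boundary layer near $\Gamma$ separately — then gives $\norm{\gd\uNL}_{L^2(\ONL)}\le C\big([\uNL]_\delta+\norm{f}_{L^2(\Omega)}\big)$ with $C$ independent of $\delta$. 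Once $\uNL\in H^1(\ONL)$ has a trace on $\Gamma$, $\norm{\bbuNL-m_{NL}}_{L^2(\Gamma)}\lesssim\norm{\uNL-m_{NL}}_{L^2(\Gamma)}+\delta\norm{\gd\uNL}_{L^2(\ONL)}\lesssim\norm{\uNL-m_{NL}}_{H^1(\ONL)}\lesssim[\uNL]_\delta+\norm{f}_{L^2(\Omega)}$, with no negative power of $\delta$; substituting back yields $|m_{NL}|\lesssim\norm{f}_{L^2(\Omega)}$, and the energy estimate closes to (\ref{well-posedness estimation}). Existence then follows from this a priori bound by a standard argument — Galerkin, or the Fredholm alternative, since for fixed $\delta$ the coupling blocks and the trace term $\intG\uG v_L$ are compact perturbations of the coercive diagonal part — while uniqueness is immediate: $F\equiv0$ gives $a(U,U)=0$, hence $\gd\uL=0$, $\uNL\equiv m_{NL}$ and $\uG\equiv0$; the (weak) fourth equation then forces $\bar u_{L,\OL}=m_{NL}$, so $\uL\equiv m_{NL}$, and $|\Omega|m_{NL}=0$ gives $U=0$.
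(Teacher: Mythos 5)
Your energy identity on $\hat H$ is correct and coincides with the paper's $\hat B[U,U]$, but the overall architecture is different from the paper's, and the point where you expected trouble is exactly where your argument leaves a real gap. The paper's key move is to \emph{eliminate} $u_\Gamma$ via the fourth equation before setting up the variational problem, working on the two-variable space $\tilde H$ with the bilinear form $B$; after this substitution the boundary term becomes $\int_\Gamma\zeta_\delta^{-1}(\uL-\bbuNL)^2\geq \frac{c}{\delta}\norm{\uL-\bbuNL}_{L^2(\Gamma)}^2$, which is fed into a joint Poincar\'e inequality (Lemma~\ref{lemma-2}) for the pair $(\uL,\bbuNL)$ that ties their boundary traces together, and this is precisely what makes $B$ $\delta$-uniformly coercive on $\tilde H$ so that plain Lax--Milgram applies. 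The paper even warns in Remark~\ref{remark: bilinear form} that the three-variable form $\hat B$ cannot be used for existence because the coupling between $\uG$ and $(\uL,\uNL)$ is lost when the three fields are treated as independent; this is the approach you are taking.

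Concretely, your existence argument does not close. You assert that the ``diagonal part'' of $a$ is coercive and the coupling blocks are a compact perturbation, but the diagonal $\lambda_1\norm{\gd\uL}^2+\tfrac{\lambda_2}{2\delta^2}[\uNL]_\delta^2+\lambda_2\int_\Gamma\zeta_\delta\uG^2$ has a one-dimensional kernel on $\hat H$: take $\uL\equiv c$, $\uNL\equiv -\tfrac{|\OL|}{|\ONL|}c$, $\uG\equiv 0$ — the joint mean-zero constraint permits this — and all three diagonal terms vanish while $\norm{U}\neq0$. So the diagonal is only positive semidefinite and the Fredholm alternative does not apply as stated; one must add and subtract a finite-rank stabilisation (e.g.\ $(\int_{\OL}\uL)(\int_{\OL}\vL)$) and verify the modified decomposition, which you don't do. If instead you go the Galerkin route, the uniform bound for the approximants would need exactly the a priori estimate you derive, but your bound on $|m_{NL}|$ requires the pointwise rewrite of the second equation (to get $\norm{\gd\uNL}_{L^2(\ONL)}$ under control and thereby avoid the spurious $\delta^{-1/2}$ in $\norm{\bbuNL-m_{NL}}_{L^2(\Gamma)}$), and Galerkin approximants do not satisfy the pointwise equations, only their tested versions. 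Your uniqueness argument, on the other hand, is correct and matches the paper's in spirit. The clean fix is the paper's: substitute $\uG=\zeta_\delta^{-1}(\uL-\bbuNL)$ first, prove coercivity of $B$ on $\tilde H$ via Lemmas~\ref{lemma-1}--\ref{lemma-3}, obtain $(\uL,\uNL)$ by Lax--Milgram, recover $\uG$, and only then run the $H^1(\ONL)$ bootstrap through the pointwise rewrite to finish (\ref{well-posedness estimation}).
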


We can further prove the solution $(\uL,\uNL,\uG)$ converges to the solution of elliptic transmission problem as $\delta\rightarrow 0$ and give the convergence rate.
\begin{theorem}
\label{THEOREM:CONVERGENCE}
If $f\in H^1(\Omega)$, which ensures a $u\in H^3(\OL)\cap H^3(\ONL)$ solves (\ref{original system}), then the solution of (\ref{coupling system}) converges to $u$ with 
\begin{equation}
  \norm{u-\uL}_{H^1(\OL)}^2+\norm{u-\uNL}_{H^1(\ONL)}^2+\delta\norm{{\pd{u}{\nn}}^+-\uG}_{L^2(\Gamma)}^2\leq C\delta^2\norm{f}_{H^1(\Omega)}^2,
\end{equation}
where the constant $C$ is independent of $\delta$.
\end{theorem}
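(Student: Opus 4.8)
The plan is the classical consistency--stability route, adapted to the nonlocal coupling. Throughout, $u\in H^3(\OL)\cap H^3(\ONL)$ denotes the solution of (\ref{original system}) guaranteed by $f\in H^1(\Omega)$, and I set $\eL := u-\uL$ on $\OL$, $\eNL := u-\uNL$ on $\ONL$, and $\eG := {\pd{u}{\nn}}^{+}-\uG$ on $\Gamma$. The first step is to insert $u$, together with the interface normal-derivative datum it induces, into (\ref{coupling system}) and record the residuals. The first equation holds \emph{exactly}, since $f_L=f$ and $-\lambda_1\Delta u=f$ in $\OL$. The third (flux) equation merely fixes the correspondence between $\uG$ and the interface normal derivative of $u$ through the jump relation $\lambda_1{\pd{u}{\nn}}^{+}=\lambda_2{\pd{u}{\nn}}^{-}$, and is satisfied exactly for that choice. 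The second equation holds up to a residual $r_{NL}$ on $\ONL$ assembled from the point integral truncation error $r_{NL,1}$ in (\ref{NLequ1}), the replacement of the constant $2$ by the weight $1/\bwd$ (costing only $|2-1/\bwd|=O(\delta)$ on $\Gamma$, a curvature correction), the restriction of the source from $\Omega$ to $\ONL$, and the constant $\bar f$; the fourth equation holds up to a residual $r_{\Gamma}$ on $\Gamma$ built from the truncation error in (\ref{interfaceequ1}), the truncation of the interface right-hand side to zero, and the replacement of $\int_{\Gamma}\bbRd\,{\pd{u}{\nn}}^{+}(\yy)\,\d S_\yy$ by ${\pd{u}{\nn}}^{+}(\xx)\int_{\Gamma}\bbRd\,\d S_\yy$. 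Finally $(\eL,\eNL,\eG)\in\hat H$: the regularity is immediate, and the mean-zero constraint holds because $\int_\Omega u=0$ while $\int_{\OL}\uL+\int_{\ONL}\uNL=0$.

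The second step is to use the bilinear form $a(\cdot,\cdot)$ on $\hat H$ constructed in Section \ref{sec:well-posedness}, for which (\ref{coupling system}) is the weak formulation and which is coercive with a $\delta$-\emph{independent} constant for the natural energy norm $\mathcal E_\delta(v_L,v_{NL},v_\Gamma)$ whose square is $\norm{v_L}_{H^1(\OL)}^2$ plus the nonlocal Dirichlet energy of $v_{NL}$ over $\ONL$ plus $\delta\Ltnorm{v_\Gamma}^2$; this is precisely the mechanism producing (\ref{well-posedness estimation}). Subtracting the two weak formulations yields $a((\eL,\eNL,\eG),(v_L,v_{NL},v_\Gamma))=\langle\mathcal R_\delta,(v_L,v_{NL},v_\Gamma)\rangle$ for all test triples, where $\mathcal R_\delta$ collects the residuals above. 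Testing with $(\eL,\eNL,\eG)$ and invoking coercivity reduces the theorem to two tasks: (a) the consistency bound $|\langle\mathcal R_\delta,(v_L,v_{NL},v_\Gamma)\rangle|\leq C\delta\norm{f}_{H^1(\Omega)}\,\mathcal E_\delta(v_L,v_{NL},v_\Gamma)$ for every test triple; and (b) upgrading the resulting control of the nonlocal Dirichlet energy of $\eNL$ to the full $\norm{\eNL}_{H^1(\ONL)}$, which, applied to the system itself, simultaneously yields the regularity $\uNL\in H^1(\ONL)$ claimed in Theorem \ref{THEOREM:WELL-POSEDNESS}.

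For task (a), the analytic core, I would Taylor-expand each residual about the diagonal and split it into an interior part supported at distance $>2\delta$ from $\Gamma$ and $\partial\Omega$, and a boundary-layer part supported in an $O(\delta)$-collar. With $u\in H^3$ the interior part is $O(\delta^2)$ pointwise, hence $O(\delta^2)$ in $L^2$; its divergence-structure terms are transferred onto the smooth test function via the kernel symmetry $\Rd=R_\delta(\yy,\xx)$, again at the price of a power of $\delta$. The boundary-layer part is only $O(1)$ pointwise (the point integral approximation is first order near $\Gamma$, and the weight discrepancy $2-1/\bwd$ is $O(\delta)$ against a surface kernel of size $O(\delta^{-1})$), but it lives on a set of measure $O(\delta)$ and so is $O(\delta^{1/2})$ in $L^2$; paired against $\eNL$ one uses the collar estimate $\norm{\eNL}_{L^2(O(\delta)\text{-collar of }\Gamma)}\leq C\delta^{1/2}\norm{\eNL}_{H^1(\ONL)}$, so this contributes $O(\delta^{1/2})\cdot O(\delta^{1/2})\,\mathcal E_\delta=O(\delta)\,\mathcal E_\delta$. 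The $\bar f$ term is $O(\delta\norm{f}_{H^1})$ by Lemma \ref{LEMMA:F}(2), and the interface residual $r_\Gamma$ — truncated right-hand side $O(\delta^2)$; the surface-integral replacement controlled via ${\pd{u}{\nn}}^{+}\in H^{3/2}(\Gamma)$ and the $O(\delta)$ concentration of $\bbRd$ along $\Gamma$ — pairs against $\delta$-weighted quantities and again yields $O(\delta\norm{f}_{H^1})$. Collecting these and combining with (b) gives $\mathcal E_\delta(\eL,\eNL,\eG)\leq C\delta\norm{f}_{H^1(\Omega)}$, i.e.\ the claimed bound.

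The main obstacle is the boundary layer near $\Gamma$. One must verify that the \emph{only} genuinely first-order (rather than second-order) truncation contributions are those supported in the $O(\delta)$-collar, and that they couple to the error \emph{solely} through quantities already controlled by the coercivity norm — this is the delicate $\delta$-accounting that sharpens a crude $O(\delta^{1/2})$ residual bound into the optimal $O(\delta)$ in $H^1$. The second delicate point is task (b): at horizon $\delta$ the nonlocal Dirichlet energy controls $\norm{\nabla\,\cdot\,}_{L^2}$ only up to an $O(\delta^2)\norm{\cdot}_{H^2}$ defect, so $\norm{\eNL}_{H^1(\ONL)}$ cannot be read off directly; I would compare $\uNL$ with the $R_\delta$-mollification of $u$ (the difference having controlled nonlocal energy, hence controlled $L^2$ norm) and then differentiate the nonlocal equation to bootstrap the gradient estimate, using the $H^1$ regularity of the nonlocal solution already established in Theorem \ref{THEOREM:WELL-POSEDNESS} for the error data. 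Once these two points are handled, the remaining estimates are routine.
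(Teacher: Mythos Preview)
Your plan is the paper's own route: set up the error system, test against $(\eL,\eNL,\eG)$ in the bilinear form $\hat B$ from Section \ref{sec:well-posedness}, bound the residuals, and upgrade the nonlocal energy of $\eNL$ to full $H^1$ by differentiating the integral equation. Two points need sharpening.

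First, the coercivity you invoke is \emph{not} available for a general triple in $\hat H$. As Remark \ref{remark: bilinear form} warns, $\hat B[\cdot;\cdot]$ controls $\norm{\eL-\bbeNL}_{L^2(\Gamma)}$ (the ingredient feeding Lemma \ref{lemma-2}) only through the fourth equation of (\ref{coupling system}); for the error triple that equation carries the residual $r_\Gamma$, so $\eG\neq \zeta_\delta^{-1}(\eL-\bbeNL)$. The paper therefore proves the corrected estimate (\ref{new coercivity}), with an extra $\frac{C}{\delta}\norm{r_\Gamma}_{L^2(\Gamma)}^2$ on the right. This is harmless because Proposition \ref{PROP:TRUNCATION ERROR INTERFACE} gives $\norm{r_\Gamma}_{L^2(\Gamma)}=O(\delta^2)$, but you cannot simply cite coercivity.

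Second, your collar-estimate treatment of the boundary-layer residuals is a legitimate variant of what the paper actually does. Rather than the generic $O(\delta^{1/2})\times O(\delta^{1/2})$ pairing, the paper exploits the explicit structure: for $r_{bl}$ it records the exact surface-integral form and proves $\int_{\ONL} r_{bl}\,h\leq C\delta\norm{u}_{H^3}\norm{h}_{H^1}$ directly (Proposition \ref{prop: truncation error nonlocal}); for $r_{NL,2}$ it swaps the order of integration to rewrite $\int_{\ONL} r_{NL,2}\,\eNL$ as a $\Gamma$-integral against $\bbeNL$, then uses $|2\bwd-1|\leq C\delta$ (Lemma \ref{INTEGRAL ERROR}) and a trace bound on $\bbeNL$ controlled by the nonlocal energy via Lemma \ref{lemma-1}. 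Both routes land on the same $O(\delta)$ pairing, so the endgame is identical.
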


\section{Proof of well-posedness (Theorem \ref{THEOREM:WELL-POSEDNESS})}
\label{sec:well-posedness}
In this section, we present the proof of Theorem \ref{THEOREM:WELL-POSEDNESS}, i.e. the well-posedness of our local-nonlocal coupling method. 
The existence and uniqueness can be proved by verifying a bilinear form, which will be constructed later, satisfies Lax-Milgram theorem. 
Based on the coercivity of the bilinear form, the estimation (\ref{well-posedness estimation}) can be derived.

We first solve $u_\Gamma$ from the last equation of (\ref{coupling system}) and express $\uG$ with $\uL$ and $\uNL$, that is 
\begin{equation}
  \label{uGamma}
  \uG(\uL,\uNL)(\xx)=\dfrac{1}{\zeta_\delta(\xx)}(\uL(\xx)-\bbuNL(\xx)).
\end{equation} 
Now we can define a bilinear form $B: \tilde{H}\times\tilde{H}\rightarrow \RR$, with 
\begin{align}
  &B[\uL,\uNL;\vL,\vNL]\nonumber\\
	=&\lambda_1\int_{\OL}\nabla \uL(\xx)\cdot\nabla \vL(\xx)\d\xx+\lambda_2\int_{\Gamma}\uG(\uL,\uNL)(\xx)\vL(\xx)\d S_\xx\nonumber\\
	&+\dfrac{\lambda_2}{2\delta^2}\int_{\ONL}\int_{\ONL}\Rd (\uNL(\xx)-\uNL(\yy))(\vNL(\xx)-\vNL(\yy))\d\xx\d\yy\nonumber\\
	&\qquad\hspace{2.5cm} -\lambda_2\int_{\ONL}\vNL(\xx)\int_{\Gamma}\bRd \dfrac{\uG(\uL,\uNL)(\yy)}{\bar{w}_\delta(\yy)}\d S_\yy\d\xx,\label{bilinear form}
\end{align}
where 
\begin{equation*}
  \tilde{H}=\left\{(u_1,u_2):u_1\in H^1(\OL), u_2\in L^2(\ONL),\int_{\OL}u_1(\xx)\d\xx+\int_{\ONL}u_2(\xx)\d\xx=0\right\}
\end{equation*}
with norm 
\begin{equation*}
  \norm{(u_1,u_2)}^2_{\tilde{H}}=\norm{u_1}^2_{H^1(\OL)}+\norm{u_2}^2_{L^2(\ONL)}.
\end{equation*}

Notice that bilinear form $B$ is the summation of two parts. The first part is obtained by multiplying the left-hand side of the Poisson equation in (\ref{coupling system}) by $\vL$ and integrating by parts with the inner normal derivative $\uG(\uL,\uNL)$.
The another one is the $L^2$ inner product of $\vNL$ and the left-hand side of the second equation in (\ref{coupling system}).

In order to get the existence and uniqueness of the solution to our coupling model, we should first state the following theorem.
\begin{theorem}
  \label{theorem: weak existence and uniqueness}
  For $f_1(\xx)\in L^2(\OL)$ and $f_2\in L^2(\ONL)$, there exists a uniqueness pair $(\uL,\uNL)\in \tilde{H}$ such that 
  \[B[\uL,\uNL;\vL,\vNL]=(f_1,f_2;\vL,\vNL),\quad\forall (\vL,\vNL)\in \tilde{H}.\]
  Here 
  $$(f_1,f_2;\vL,\vNL)=\int_{\OL}f_1(\xx)\vL(\xx)\d\xx+\int_{\ONL}f_2(\xx)\vNL(\xx)\d\xx.$$
\end{theorem}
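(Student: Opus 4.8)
The plan is to verify the hypotheses of the Lax--Milgram theorem for the bilinear form $B$ on the Hilbert space $\tilde H$. First I would check that $\tilde H$, equipped with the stated norm, is indeed a Hilbert space: it is a closed subspace of $H^1(\OL)\times L^2(\ONL)$ cut out by the single linear constraint $\int_{\OL}u_1+\int_{\ONL}u_2=0$, and the right-hand side functional $(f_1,f_2;\cdot,\cdot)$ is clearly bounded on it. The substance of the proof is therefore the continuity and, above all, the coercivity of $B$.

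For \emph{continuity}, I would estimate each of the four terms in \eqref{bilinear form} separately. The first term is bounded by $\lambda_1\|\uL\|_{H^1(\OL)}\|\vL\|_{H^1(\OL)}$ by Cauchy--Schwarz. For the terms involving $\uG(\uL,\uNL) = \zeta_\delta^{-1}(\uL - \bbuNL)$ I would use Proposition~1 to bound $\zeta_\delta$, $\bwd$, and the boundary integrals of the kernels from above and below (all comparable to $\delta$ up to $\delta$-independent constants), together with the trace theorem on $\OL$ to control $\|\uL\|_{L^2(\Gamma)}$ and a standard averaging estimate to control $\|\bbuNL\|_{L^2(\Gamma)}$ by $\|\uNL\|_{L^2(\ONL)}$; the double-integral (nonlocal Dirichlet) term is bounded by $C\delta^{-2}\|\uNL\|_{L^2}\|\vNL\|_{L^2}$, which is finite for fixed $\delta$, so $B$ is bounded (with a $\delta$-dependent constant, which suffices for existence and uniqueness).

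The main obstacle is \emph{coercivity}. The key point is that the two modifications described after \eqref{coupling system} — dividing by $-\bwd$ and inserting the weight $\lambda_2/\bwd$ — were designed precisely so that when one substitutes $\vL=\uL$, $\vNL=\uNL$ the two cross terms (the $\Gamma$-integral against $\vL=\uL$ and the $\ONL$-integral against $\vNL=\uNL$) recombine with the $\uG$ expression \eqref{uGamma} into a single manifestly nonnegative quantity. Concretely I expect $B[\uL,\uNL;\uL,\uNL]$ to equal
\begin{equation*}
  \lambda_1\int_{\OL}|\nabla\uL|^2\,\d\xx
  +\frac{\lambda_2}{2\delta^2}\int_{\ONL}\!\int_{\ONL}\Rd\,(\uNL(\xx)-\uNL(\yy))^2\,\d\xx\,\d\yy
  +\lambda_2\int_{\Gamma}\frac{(\uL(\xx)-\bbuNL(\xx))^2}{\zeta_\delta(\xx)}\,\d S_\xx,
\end{equation*}
all three terms being nonnegative. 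Coercivity in $\|\cdot\|_{\tilde H}$ then reduces to a Poincar\'e-type inequality: the first term controls $\|\nabla\uL\|_{L^2(\OL)}^2$, the second is the nonlocal Dirichlet energy of $\uNL$, and the third penalizes the mismatch between the trace of $\uL$ and the average of $\uNL$ across $\Gamma$. One shows that these three quantities together, under the zero-mean constraint built into $\tilde H$, bound $\|\uL\|_{H^1(\OL)}^2+\|\uNL\|_{L^2(\ONL)}^2$ from below: the nonlocal energy plus a nonlocal Poincar\'e inequality controls $\|\uNL-\langle\uNL\rangle\|_{L^2(\ONL)}^2$ (for a suitable average $\langle\uNL\rangle$), the interface term ties $\langle\uNL\rangle$ to the trace of $\uL$, and the gradient term plus the global zero-mean condition then pins down the remaining constant. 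This Poincar\'e argument — in particular getting the constants uniform in the relevant range and handling the coupling through $\Gamma$ — is where the real work lies; once it is in place, Lax--Milgram yields existence and uniqueness of $(\uL,\uNL)\in\tilde H$, and hence, via \eqref{uGamma}, of the full triple solving \eqref{coupling system}.
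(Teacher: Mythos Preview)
Your proposal is correct and follows essentially the same route as the paper: verify Lax--Milgram by proving continuity term-by-term and then coercivity via the key identity $B[\uL,\uNL;\uL,\uNL]=\lambda_1\|\nabla\uL\|^2+\tfrac{\lambda_2}{2\delta^2}\iint R_\delta(\uNL(\xx)-\uNL(\yy))^2+\lambda_2\int_\Gamma\zeta_\delta^{-1}(\uL-\bbuNL)^2$, followed by a Poincar\'e-type argument exploiting the zero-mean constraint. The paper organizes that last step through the smoothed function $\bbuNL\in H^1(\ONL)$ (Lemmas~\ref{lemma-1}--\ref{lemma-3}) rather than a constant average $\langle\uNL\rangle$, but this is the same mechanism you describe.
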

This theorem can be proved by Lax-Milgram theorem. That is to verify the continuity and coercivity of the bilinear form $B$.
\begin{proposition}
  \label{prop:Continuity}
  (Continuity)
  For any $(\uL,\uNL), (\vL,\vNL)\in \tilde{H}$, we have the following estimation
  \begin{align*}
    B[\uL,\uNL;\vL,\vNL]
\leq\dfrac{C}{\delta^2}\norm{(\uL,\uNL)}_{\tilde{H}}\norm{(\vL,\vNL)}_{\tilde{H}},
  \end{align*}
  where $C$ is independent of $\delta$.
\end{proposition}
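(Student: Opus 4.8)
The plan is to bound each of the four terms in the definition \eqref{bilinear form} of $B$ separately, estimate the auxiliary quantity $\uG(\uL,\uNL)$ via \eqref{uGamma}, and collect everything into the claimed $\frac{C}{\delta^2}$ bound. First I would handle the two terms that do not involve $\uG$. The term $\lambda_1\int_{\OL}\nabla\uL\cdot\nabla\vL\,\d\xx$ is immediately controlled by $\lambda_1\norm{\uL}_{H^1(\OL)}\norm{\vL}_{H^1(\OL)}\leq C\norm{(\uL,\uNL)}_{\tilde H}\norm{(\vL,\vNL)}_{\tilde H}$ by Cauchy--Schwarz, with no $\delta$-dependence. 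For the double-integral term, I would write $(\uNL(\xx)-\uNL(\yy))(\vNL(\xx)-\vNL(\yy))$, expand, and use the elementary bound $|\uNL(\xx)-\uNL(\yy)|^2\leq 2\uNL(\xx)^2+2\uNL(\yy)^2$ together with the Cauchy--Schwarz inequality in $L^2(\ONL\times\ONL)$; since $\int_{\ONL}\int_{\ONL}\Rd\,\d\xx\,\d\yy\leq C|\ONL|$ uniformly in $\delta$ by the first estimate in the Proposition on kernels, this term is bounded by $\frac{C}{\delta^2}\norm{\uNL}_{L^2(\ONL)}\norm{\vNL}_{L^2(\ONL)}$, which is the source of the $\delta^{-2}$ prefactor.

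Next I would estimate $\uG(\uL,\uNL)$ in $L^2(\Gamma)$. From \eqref{uGamma} we have $\uG=\frac{1}{\zeta_\delta}(\uL-\bbuNL)$. The denominator $\zeta_\delta(\xx)=\frac{2\delta^2}{\bwd(\xx)}\int_\Gamma\bbRd\,\d S_\yy$ is, by the kernel Proposition, bounded below by $\frac{c\delta^2}{\delta}\cdot\frac{1}{C}=c'\delta$ when $d(\xx,\Gamma)$ is small — and for $\xx\in\Gamma$ this is exactly the relevant regime — so $\frac{1}{\zeta_\delta(\xx)}\leq \frac{C}{\delta}$. For the numerator I would use the trace inequality $\norm{\uL}_{L^2(\Gamma)}\leq C\norm{\uL}_{H^1(\OL)}$, and for $\bbuNL$ I would apply Cauchy--Schwarz in the averaging integral together with $\bwd(\xx)\geq C_1$: $|\bbuNL(\xx)|^2\leq \frac{1}{\bwd(\xx)}\int_{\ONL}\bRd\,\uNL(\yy)^2\,\d\yy$, and then integrate over $\Gamma$ and use $\int_\Gamma\bRd\,\d S_\xx\leq C/\delta$ to get $\norm{\bbuNL}_{L^2(\Gamma)}^2\leq \frac{C}{\delta}\norm{\uNL}_{L^2(\ONL)}^2$. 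Hence $\norm{\uG}_{L^2(\Gamma)}\leq \frac{C}{\delta}\bigl(\norm{\uL}_{H^1(\OL)}+\delta^{-1/2}\norm{\uNL}_{L^2(\ONL)}\bigr)\leq \frac{C}{\delta^{3/2}}\norm{(\uL,\uNL)}_{\tilde H}$.

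With this in hand, the second term $\lambda_2\int_\Gamma\uG(\uL,\uNL)\vL\,\d S_\xx$ is bounded by $\lambda_2\norm{\uG}_{L^2(\Gamma)}\norm{\vL}_{L^2(\Gamma)}\leq \frac{C}{\delta^{3/2}}\norm{(\uL,\uNL)}_{\tilde H}\cdot C\norm{(\vL,\vNL)}_{\tilde H}$, which is dominated by $\frac{C}{\delta^2}$. For the fourth term, $\lambda_2\int_{\ONL}\vNL(\xx)\int_\Gamma\bRd\frac{\uG(\yy)}{\bwd(\yy)}\,\d S_\yy\,\d\xx$, I would bound the inner integral pointwise: $\bigl|\int_\Gamma\bRd\frac{\uG(\yy)}{\bwd(\yy)}\,\d S_\yy\bigr|\leq \frac{1}{C_1}\bigl(\int_\Gamma\bRd\,\d S_\yy\bigr)^{1/2}\bigl(\int_\Gamma\bRd\,\uG(\yy)^2\,\d S_\yy\bigr)^{1/2}$, and then integrate in $\xx$ over $\ONL$, using Fubini and $\int_{\ONL}\bRd\,\d\xx\leq C$ to obtain a bound of the form $\frac{C}{\sqrt\delta}\norm{\uG}_{L^2(\Gamma)}\norm{\vNL}_{L^2(\ONL)}\leq \frac{C}{\delta^2}\norm{(\uL,\uNL)}_{\tilde H}\norm{(\vL,\vNL)}_{\tilde H}$. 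Summing the four contributions gives the claim. The main obstacle is the bookkeeping of $\delta$-powers through the averaging operators $\bbuNL$ and $\uG$: one must be careful to use the lower bound on $\zeta_\delta$ (equivalently on $\int_\Gamma\bbRd\,\d S_\yy$) that holds precisely because $\xx\in\Gamma$ lies within $\frac{\sqrt2}{2}\delta$ of $\Gamma$, and to not lose an extra negative power of $\delta$ when passing the boundary-integral kernel estimates through; everything else is Cauchy--Schwarz, Fubini, and the trace theorem.
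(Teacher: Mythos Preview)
Your proposal is correct and follows essentially the same approach as the paper: both bound the four terms in \eqref{bilinear form} separately using Cauchy--Schwarz, the kernel estimates, the trace inequality, the lower bound $\zeta_\delta(\xx)\geq c\delta$ for $\xx\in\Gamma$, and the key computation $\norm{\bbuNL}_{L^2(\Gamma)}^2\leq \frac{C}{\delta}\norm{\uNL}_{L^2(\ONL)}^2$. The only cosmetic difference is that you first package these into an intermediate bound $\norm{\uG}_{L^2(\Gamma)}\leq C\delta^{-3/2}\norm{(\uL,\uNL)}_{\tilde H}$ and then apply it to the second and fourth terms, whereas the paper substitutes $\uG=\zeta_\delta^{-1}(\uL-\bbuNL)$ directly into each term and tracks the $\uL$ and $\bbuNL$ contributions separately (obtaining slightly sharper powers $\delta^{-1}$ and $\delta^{-3/2}$ on individual pieces, though all are dominated by $\delta^{-2}$).
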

\begin{proof}
If we can prove 
\begin{align}
\label{eq:continuous}
    B[\uL,\uNL;\vL,\vNL]
\leq\dfrac{C}{\delta^2}\left(\norm{\uL}_{H^1(\OL)}+\norm{\uNL}_{L^2(\ONL)}\right)\left(\norm{\vL}_{H^1(\OL)}+\norm{\vNL}_{L^2(\ONL)}\right),
  \end{align}
the continuity is then a corollary.
  
  There are four terms in (\ref{bilinear form}). For the first term, 
  \begin{align}
    \lambda_1\int_{\OL} \nabla \uL(\xx)\cdot \nabla \uNL(\xx)\d\xx&\leq \lambda_1\norm{\nabla \uL}_{L^2(\OL)}\norm{\nabla \vL}_{L^2(\OL)}
    \leq \lambda_1\norm{\uL}_{H^1(\OL)}\norm{\vL}_{H^1(\OL)}.\label{Continuity-1}
  \end{align}
  The interface function $\uG(\uL,\uNL)$ defined in (\ref{uGamma}) appears in the second and fourth terms of (\ref{bilinear form}). Since $C_1\delta\leq \zeta_\delta(\xx)\leq C_2\delta$, $\bwd(\xx)\geq C$ when $\xx\in\Gamma$, and 
  \begin{align*}
    \norm{\bbuNL}_{L^2(\Gamma)}^2
    =& \int_\Gamma\dfrac{1}{\bwd^2(\xx)}\left(\int_\ONL\bRd \uNL(\yy)\d\yy\right)^2\d S_\xx\\
    \leq& C\int_\Gamma\left(\int_\ONL\bRd\d\yy\right)\left(\int_\ONL\bRd \uNL^2(\yy)\d\yy\right)\d S_\xx\\
    \leq& C\int_\ONL \uNL^2(\yy)\int_\Gamma\bRd \d S_\xx\d\yy\\
    \leq& \frac{C}{\delta}\norm{\uNL}^2_{L^2(\ONL)},
  \end{align*}
  we can estimate the second term 
  \begin{align}
    &\lambda_2\int_{\Gamma} \uG(\uL,\uNL)(\xx)v(\xx)\d S_\xx\notag\\
    \leq&\dfrac{C}{\delta}\left(\int_{\Gamma}\left|\uL(\xx)\right|\left|\vL(\xx)\right|\d\xx+\int_{\Gamma}\left|\bbuNL(\xx)\right|\left|\vL(\xx)\right|\d\xx\right)\notag\\
    \leq&\dfrac{C}{\delta}\norm{\uL}_{L^2(\Gamma)}\norm{\vL}_{L^2(\Gamma)}+\dfrac{C}{\delta}\norm{\bbuNL}_{L^2(\Gamma)}\norm{\vL}_{L^2(\Gamma)}\notag\\
    \leq&\dfrac{C}{\delta}\norm{\uL}_{H^1(\OL)}\norm{\vL}_{H^1(\OL)}+\dfrac{C}{\delta^{3/2}}\norm{\uNL}_{L^2(\ONL)}\norm{\vL}_{H^1(\OL)}.\label{Continuity-2}
  \end{align}
  Similarly, the fourth term can be estimated as 
  \begin{align}
    &\lambda_2\int_{\ONL}\vNL(\xx)\int_{\Gamma}\bRd \dfrac{\uG(\uL,\uNL)(\yy)}{\bar{w}_\delta(\yy)}\d S_\yy\d\xx\notag\\
    \leq&C\int_{\ONL}\left|\vNL(\xx)\right|\int_{\Gamma}\bRd\left|\uL(\yy)\right|\d S_\yy\d\xx+C\int_{\ONL}\left|\vNL(\xx)\right|\int_{\Gamma}\bRd\left|\bbuNL(\yy)\right|\d S_\yy\d\xx\notag\\
    \leq& C\norm{\vNL}_{L^2(\ONL)}\left(\int_{\ONL}\left(\int_{\Gamma}\bRd\d S_\yy\right)\left(\int_{\Gamma}\bRd \uL^2(\yy)\d S_\yy\right)\d\xx\right)^{\frac{1}{2}}\notag\\
    &\ +C\norm{\vNL}_{L^2(\ONL)}\left(\int_{\ONL}\left(\int_{\Gamma}\bRd\d S_\yy\right)\left(\int_{\Gamma}\bRd \bbuNL^2(\yy)\d S_\yy\right)\d\xx\right)^{\frac{1}{2}}\notag\\
    \leq& \dfrac{C}{\sqrt{\delta}}\norm{\vNL}_{L^2(\ONL)}\norm{\uL}_{L^2(\Gamma)}+\dfrac{C}{\sqrt{\delta}}\norm{\vNL}_{L^2(\ONL)}\norm{\bbuNL}_{L^2(\Gamma)}\notag\\
    \leq& \dfrac{C}{\sqrt{\delta}}\norm{\uL}_{H^1(\OL)}\norm{\vNL}_{L^2(\ONL)}+\dfrac{C}{\delta}\norm{\uNL}_{L^2(\ONL)}\norm{\vNL}_{L^2(\ONL)}\label{Continuity-4}
  \end{align}

 What left in (\ref{bilinear form}) is the third term, we can directly estimate it by a similar way as above.
  \begin{align}
    &\dfrac{\lambda_2}{2\delta^2}\int_{\ONL}\int_{\ONL}\Rd (\uNL(\xx)-\uNL(\yy))(\vNL(\xx)-\vNL(\yy))\d\xx\d\yy\notag\\
  =&\dfrac{\lambda_2}{\delta^2}\int_{\ONL}\int_{\ONL}\Rd\left(\uNL(\xx)\vNL(\xx)-\uNL(\xx)\vNL(\yy)\right)\d\xx\d\yy\notag\\
  \leq& \dfrac{\lambda_2}{\delta^2}\int_{\ONL}\left|\uNL(\xx)\right|\left|\vNL(\xx)\right|\int_\ONL\bRd\d\yy\d\xx
+\dfrac{\lambda_2}{\delta^2}\int_{\ONL}\left|\uNL(\xx)\right|\int_{\ONL}\bRd\left|\vNL(\yy)\right|\d\yy\d\xx\notag\\
  \leq&\dfrac{C}{\delta^2}\norm{\uNL}_{L^2(\ONL)}\norm{\vNL}_{L^2(\ONL)}
+\dfrac{C}{\delta^2}\norm{\uNL}_{L^2(\ONL)}\left(\int_{\ONL}\left(\int_{\ONL}\bRd\left|\vNL(\yy)\right|\d\yy\right)^2\d\xx\right)^{\frac{1}{2}}\notag\\
  \leq& \dfrac{C}{\delta^2}\norm{\uNL}_{L^2(\ONL)}\norm{\vNL}_{L^2(\ONL)}.\label{Continuity-3}
  \end{align}
  Combining (\ref{Continuity-1})(\ref{Continuity-2})(\ref{Continuity-3}) and (\ref{Continuity-4}), we get (\ref{eq:continuous}), which means the estimation in Proposition \ref{prop:Continuity} is proved.
\end{proof}
\begin{proposition}
  \label{prop:Coercivity}
  (Coercivity)
  For a function pair $(\uL,\uNL)\in \tilde{H}$, there exists a constant $C$ such that  
  \begin{align*}
    B[\uL,\uNL;\uL,\uNL]\geq C\left(\norm{\uL}_{H^1(\OL)}^2+\norm{\uNL}_{L^2(\ONL)}^2\right).
  \end{align*}
  Here $C$ is independent of $\delta$.
\end{proposition}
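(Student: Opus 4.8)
The plan is to evaluate $B$ on the diagonal and exploit the fact that the normalizations built into (\ref{coupling system}) are exactly what make $B[\uL,\uNL;\uL,\uNL]$ collapse into a sum of manifestly nonnegative energies, and then to recover the full $\tilde H$-norm from these pieces by Poincar\'e-type inequalities. First I would substitute $\vL=\uL$, $\vNL=\uNL$ into (\ref{bilinear form}). The first term is $\lambda_1\norm{\nabla\uL}_{L^2(\OL)}^2$ and the third is $\frac{\lambda_2}{2\delta^2}\int_{\ONL}\int_{\ONL}\Rd(\uNL(\xx)-\uNL(\yy))^2\d\xx\d\yy$. In the fourth term I swap the order of integration and use $\int_{\ONL}\bRd\,\uNL(\xx)\d\xx=\bwd(\yy)\bbuNL(\yy)$ from (\ref{notation1}) together with the symmetry of the kernel, so that term becomes $-\lambda_2\int_{\Gm}\uG(\uL,\uNL)(\xx)\,\bbuNL(\xx)\d S_\xx$; adding the second term and invoking $\uL(\xx)-\bbuNL(\xx)=\zeta_\delta(\xx)\uG(\uL,\uNL)(\xx)$ from (\ref{uGamma}) turns the two interface terms into $\lambda_2\int_{\Gm}\zeta_\delta(\xx)\,\uG(\uL,\uNL)(\xx)^2\d S_\xx$. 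Thus $B[\uL,\uNL;\uL,\uNL]=E_L+E_\Gamma+E_{NL}$ with $E_L=\lambda_1\norm{\nabla\uL}_{L^2(\OL)}^2$, $E_\Gamma=\lambda_2\int_{\Gm}\zeta_\delta\,\uG(\uL,\uNL)^2\d S$, and $E_{NL}=\frac{\lambda_2}{2\delta^2}\int_{\ONL}\int_{\ONL}\Rd(\uNL(\xx)-\uNL(\yy))^2\d\xx\d\yy$, all three nonnegative. Using $C_1\delta\le\zeta_\delta\le C_2\delta$ on $\Gm$, I record the two facts $E_\Gamma\ge C_1\delta\,\Ltnorm{\uG(\uL,\uNL)}^2$ (this is what produces the $\delta\Ltnorm{\uG}^2$ term in Theorem~\ref{THEOREM:WELL-POSEDNESS}) and $E_\Gamma\ge(C_2\delta)^{-1}\norm{\uL-\bbuNL}_{L^2(\Gm)}^2$.

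Next I would reconstruct the norm piece by piece. Write $a=|\OL|^{-1}\int_{\OL}\uL$ and $b=|\ONL|^{-1}\int_{\ONL}\uNL$. Poincar\'e--Wirtinger on the connected $C^2$ domain $\OL$ gives $\norm{\uL-a}_{L^2(\OL)}^2\le CE_L$, and the nonlocal Poincar\'e inequality on the connected domain $\ONL$ — whose constant is $\delta$-independent for $\delta$ small because $R\ge\gamma_0$ near $0$ — gives $\norm{\uNL-b}_{L^2(\ONL)}^2\le CE_{NL}$. It remains to bound $a^2$; since the constraint $\int_{\OL}\uL+\int_{\ONL}\uNL=0$ forces $|\OL|a+|\ONL|b=0$, hence $|a|\le|a-b|$, it suffices to bound $(a-b)^2$. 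I would do this by splitting across $\Gm$: for $\xx\in\Gm$, $a-b=(a-\uL(\xx))+(\uL(\xx)-\bbuNL(\xx))+(\bbuNL(\xx)-b)$; taking $L^2(\Gm)$ norms, the first summand is bounded via the trace inequality and $\norm{\uL-a}_{H^1(\OL)}\le C\norm{\nabla\uL}_{L^2(\OL)}$ by $CE_L^{1/2}$, and the second is $\norm{\uL-\bbuNL}_{L^2(\Gm)}\le C(\delta E_\Gamma)^{1/2}\le CE_\Gamma^{1/2}$ by the remark above.

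The hard part — and the only non-routine step — is the third summand, for which I would need (or establish) a $\delta$-uniform nonlocal trace--Poincar\'e inequality $\norm{\bbuNL-b}_{L^2(\Gm)}^2\le C\,\delta^{-2}\int_{\ONL}\int_{\ONL}\Rd(\uNL(\xx)-\uNL(\yy))^2\d\xx\d\yy=CE_{NL}$ with $C$ independent of $\delta$. Granting it, the triangle inequality yields $(a-b)^2\le C(E_L+E_\Gamma+E_{NL})=C\,B[\uL,\uNL;\uL,\uNL]$, hence $a^2,b^2\le CB$, and then $\norm{\uL}_{H^1(\OL)}^2\le\norm{\nabla\uL}_{L^2(\OL)}^2+2\norm{\uL-a}_{L^2(\OL)}^2+2|\OL|a^2\le CB$ and $\norm{\uNL}_{L^2(\ONL)}^2\le2\norm{\uNL-b}_{L^2(\ONL)}^2+2|\ONL|b^2\le CB$, which is the asserted coercivity. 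To prove the key estimate I would start from $\bbuNL(\xx)-b=(\bwd(\xx)|\ONL|)^{-1}\int_{\ONL}\int_{\ONL}\bRd\,(\uNL(\yy)-\uNL(\zz))\d\yy\d\zz$, apply Cauchy--Schwarz against the probability density $\bRd/(\bwd(\xx)|\ONL|)$ in $\yy$, integrate over $\Gm$, and use $\bwd\ge C$ on $\Gm$ together with the surface kernel bound $\int_{\Gm}\bRd\,\d S_\xx\le C/\delta$, reducing everything to controlling $\int_{\ONL}\int_{\ONL}(\uNL(\yy)-\uNL(\zz))^2\d\yy\d\zz$ and a collar-localized variant by $E_{NL}$ via the nonlocal Poincar\'e/chaining argument. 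The main obstacle is precisely this $\delta$-uniformity: the naive route $\norm{\bbuNL-b}_{L^2(\Gm)}^2\le C\delta^{-1}\norm{\uNL-b}_{L^2(\ONL)}^2$ loses a full power of $\delta$ (as the example of an affine $\uNL$ already shows), so the missing factor $\delta$ must be regained from the fact that $\bbuNL$ restricted to $\Gm$ is a kernel-weighted average concentrated in a $2\delta$-collar of $\Gm$; with that in place, the rest is standard Poincar\'e, Wirtinger and trace bookkeeping made possible by the algebraic collapse of the first step.
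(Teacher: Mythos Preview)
Your diagonal computation is correct and matches the paper exactly: $B[\uL,\uNL;\uL,\uNL]=E_L+E_\Gamma+E_{NL}$ with the three nonnegative pieces you wrote, and the bound $E_\Gamma\ge (C\delta)^{-1}\norm{\uL-\bbuNL}_{L^2(\Gamma)}^2$ is precisely the paper's (\ref{boundary diff}).

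Where you diverge from the paper is in the Poincar\'e step, and there is a genuine gap. Your decomposition via the means $a,b$ reduces everything to the trace estimate $\norm{\bbuNL-b}_{L^2(\Gamma)}^2\le C\,E_{NL}$ with $C$ independent of $\delta$. This inequality is \emph{true}, but your proposed proof of it is only a heuristic: you acknowledge that Cauchy--Schwarz plus $\int_\Gamma\bRd\,\d S_\xx\le C/\delta$ loses a power of $\delta$, and then assert that the collar localization should regain it, without actually showing how the collar-localized quantity is controlled by $E_{NL}$ uniformly in $\delta$. The chaining argument you allude to would have to be carried out carefully, and as stated this step is not a proof.

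The paper avoids this difficulty entirely by observing (Lemma~\ref{lemma-1}) that $E_{NL}$ controls not only $\norm{\bbuNL-\uNL}_{L^2(\ONL)}^2$ but also $\norm{\nabla\bbuNL}_{L^2(\ONL)}^2$. This promotes $\bbuNL$ to a genuine $H^1(\ONL)$ function with $\delta$-uniform gradient bound, so the whole problem becomes a coupled-domain Poincar\'e inequality for the pair $(\uL,\bbuNL)\in H^1(\OL)\times H^1(\ONL)$, stated as Lemma~\ref{lemma-2}, together with the zero-mean correction of Lemma~\ref{lemma-3}. In particular, your missing trace estimate follows immediately once you have Lemma~\ref{lemma-1}: with $\bar b=|\ONL|^{-1}\int_{\ONL}\bbuNL$, standard trace and Poincar\'e--Wirtinger on $\ONL$ give $\norm{\bbuNL-\bar b}_{L^2(\Gamma)}\le C\norm{\nabla\bbuNL}_{L^2(\ONL)}\le C E_{NL}^{1/2}$, and $|\bar b-b|\le C\norm{\bbuNL-\uNL}_{L^2(\ONL)}\le C\delta E_{NL}^{1/2}$. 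So the missing ingredient in your argument is precisely the smoothing estimate $\norm{\nabla\bbuNL}_{L^2(\ONL)}^2\le C E_{NL}$; with it, either your route or the paper's finishes cleanly.
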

The proof of coercivity is more involved. We need three lemmas to derive the above proposition. 
\begin{lemma}
  \label{lemma-1}
	For arbitrary $\uNL\in L^2(\ONL)$ and $\bbuNL$ defined as in (\ref{notation1}), there exists two positive constants $C_1, C_2$ such that 
	\begin{equation*}
		\dfrac{1}{2\delta^2}\int_{\ONL}\int_{\ONL}\Rd (\uNL(\xx)-\uNL(\yy))^2\d\xx\d\yy\geq C_1\norm{\nabla \bbuNL}_{L^2(\ONL)}^2,
	\end{equation*}
  and 
	\begin{equation*}
		\norm{\bbuNL-\uNL}_{L^2(\ONL)}^2 \leq C_2 \int_{\ONL}\int_{\ONL}\Rd (\uNL(\xx)-\uNL(\yy))^2\d\xx\d\yy.
	\end{equation*}
\end{lemma}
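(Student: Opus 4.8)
\textbf{Proof proposal for Lemma \ref{lemma-1}.}

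The plan is to recognize both inequalities as nonlocal Poincaré-type estimates for the averaged function $\bbuNL$, which is essentially a mollification of $\uNL$ at scale $\delta$. The central observation is that $\bbuNL(\xx) = \frac{1}{\bwd(\xx)}\int_{\ONL}\bRd \uNL(\yy)\,\d\yy$ is a weighted local average, so its gradient can be controlled by nonlocal differences of $\uNL$, and conversely $\uNL$ cannot be far from its local average in $L^2$ unless the nonlocal Dirichlet energy is large.

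For the first inequality, I would first compute $\nabla\bbuNL$ explicitly by differentiating the quotient: $\nabla\bbuNL(\xx) = \frac{1}{\bwd(\xx)}\int_{\ONL}\nabla_\xx\bRd\,(\uNL(\yy)-\bbuNL(\xx))\,\d\yy$, using that $\int_{\ONL}\nabla_\xx\bRd\,\d\yy = \nabla\bwd(\xx)$ to insert the $-\bbuNL(\xx)$ term for free. Since $\nabla_\xx\bRd$ is, up to the factor $\frac{\xx-\yy}{2\delta^2}$, again a kernel of the form $\Rd$ (this is the defining relation $\bar R' = -R$), one gets $|\nabla\bbuNL(\xx)| \leq \frac{C}{\delta}\int_{\ONL}\Rd|\uNL(\yy)-\bbuNL(\xx)|\,\d\yy$ after using Proposition 2 to bound $\bwd$ from below. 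Then Cauchy–Schwarz in $\yy$ (against the measure $\Rd\,\d\yy$, whose mass is $O(1)$ by Proposition 2) gives $|\nabla\bbuNL(\xx)|^2 \leq \frac{C}{\delta^2}\int_{\ONL}\Rd|\uNL(\yy)-\bbuNL(\xx)|^2\,\d\yy$. Integrating in $\xx$ and splitting $\uNL(\yy)-\bbuNL(\xx) = (\uNL(\yy)-\uNL(\xx)) + (\uNL(\xx)-\bbuNL(\xx))$, the first piece is directly the desired double integral; the second piece, $\int_{\ONL}\frac{1}{\delta^2}\left(\int_{\ONL}\Rd\,\d\yy\right)|\uNL(\xx)-\bbuNL(\xx)|^2\,\d\xx \leq \frac{C}{\delta^2}\norm{\uNL-\bbuNL}_{L^2(\ONL)}^2$, is then absorbed using the \emph{second} inequality of the lemma (so the two estimates should really be proved in the order: second, then first, or jointly).

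For the second inequality I would write $\bbuNL(\xx)-\uNL(\xx) = \frac{1}{\bwd(\xx)}\int_{\ONL}\bRd(\uNL(\yy)-\uNL(\xx))\,\d\yy$, apply Cauchy–Schwarz against $\bRd\,\d\yy$ to get $|\bbuNL(\xx)-\uNL(\xx)|^2 \leq \frac{C}{\bwd(\xx)}\int_{\ONL}\bRd(\uNL(\yy)-\uNL(\xx))^2\,\d\yy \leq C\int_{\ONL}\bRd(\uNL(\yy)-\uNL(\xx))^2\,\d\yy$, using Proposition 2 for the lower bound on $\bwd$. Integrating in $\xx$ reduces matters to dominating $\int_{\ONL}\int_{\ONL}\bRd(\uNL(\xx)-\uNL(\yy))^2\,\d\xx\,\d\yy$ by $\int_{\ONL}\int_{\ONL}\Rd(\uNL(\xx)-\uNL(\yy))^2\,\d\xx\,\d\yy$. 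This is where I expect the main obstacle: $\bar R$ has support of radius $1$ (i.e. $|\xx-\yy|\leq 2\delta$), the same as $R$, but $\bar R$ does \emph{not} satisfy a pointwise lower bound by a multiple of $R$ near $r=1$. The standard fix is a chaining/convexity argument: for $\xx,\yy$ with $|\xx-\yy|\le 2\delta$, insert an intermediate point $\zz$ ranging over a ball of radius $\sim\delta/2$ around the midpoint (where $R\ge\gamma_0$ by nondegeneracy), write $\uNL(\xx)-\uNL(\yy) = (\uNL(\xx)-\uNL(\zz))+(\uNL(\zz)-\uNL(\yy))$, square, average over $\zz$, and use that for $\zz$ in that ball both $|\xx-\zz|$ and $|\zz-\yy|$ stay below $2\delta$ so the pairs $(\xx,\zz)$ and $(\zz,\yy)$ are controlled by $\Rd$ (via $R\ge\gamma_0$ on $[0,1/2]$). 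A change of variables and Fubini then bounds the $\bar R$-energy by a constant times the $R$-energy, with the constant depending only on $n$, $\gamma_0$, and $\sup\bar R$, not on $\delta$. One should check the geometry near $\Gamma$ and $\partial\ONL$ is not an issue — since $\ONL$ is bounded with smooth boundary, for $\delta$ small enough the intermediate ball can be chosen inside $\ONL$ (possibly replacing the midpoint by a point pushed slightly into the interior), which is the routine but slightly delicate domain-geometry point to dispatch.
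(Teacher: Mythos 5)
The paper does not actually prove Lemma \ref{lemma-1}: it attributes the first inequality to \cite{shi2017convergence} and the second to \cite{wang2023nonlocal}, so there is no in-text argument to compare against. Your self-contained reconstruction is, however, structurally sound and hits the genuine pressure points. The computation $\nabla\bbuNL(\xx) = \bwd(\xx)^{-1}\int_{\ONL}\nabla_\xx\bRd\,(\uNL(\yy)-\bbuNL(\xx))\,\d\yy$ is correct, and using $\bar R'=-R$ to turn $\nabla_\xx\bRd$ into a multiple of $\Rd\cdot\frac{\xx-\yy}{2\delta^2}$ is exactly the right mechanism; the subsequent reduction of the first inequality to the second via the split $\uNL(\yy)-\bbuNL(\xx)=(\uNL(\yy)-\uNL(\xx))+(\uNL(\xx)-\bbuNL(\xx))$ works, and you are right that the two estimates must be proved in the order second-then-first (or jointly). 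Likewise your Cauchy--Schwarz reduction of the second inequality to a comparison between the $\bar R$-energy and the $R$-energy, and your observation that a pointwise bound $\bar R\leq CR$ fails in general because $R$ may vanish on interior subintervals of $(1/2,1)$, is the correct diagnosis; a three-point chaining argument is the standard remedy, and your Fubini bookkeeping is the right shape.

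Two small quantitative slips in the chaining sketch should be corrected. The nondegeneracy hypothesis gives $R(r)\geq\gamma_0$ only for $r\leq 1/2$, i.e.\ $\Rd(\xx,\zz)\geq\gamma_0\alpha_n\delta^{-n}$ only when $|\xx-\zz|\leq\sqrt{2}\,\delta$, not when $|\xx-\zz|\leq 2\delta$ as you wrote. Consequently the intermediate ball around the midpoint $m$ of $\xx,\yy$ must have radius strictly less than $(\sqrt{2}-1)\delta\approx 0.41\delta$; a radius of order $\delta/2$ is too large, since $\zz$ could then be at distance up to $1.5\delta>\sqrt{2}\,\delta$ from $\xx$. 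Taking radius $\delta/4$, say, repairs this. Finally, the domain-geometry point you flag is real but routine: for $C^2$ boundary $\Gamma=\p\ONL$, the midpoint of two points of $\ONL$ at distance $\leq 2\delta$ lies within $O(\delta^2)$ of $\ONL$, so pushing it inward by a fixed small multiple of $\delta$ along the local inner normal places a ball of radius comparable to $\delta$ inside $\ONL\cap B(\xx,\sqrt{2}\delta)\cap B(\yy,\sqrt{2}\delta)$ once $\delta$ is small; this is the same boundary-straightening device the paper uses in Appendices~\ref{appendix:proof1} and~\ref{appendix:proof3}. With these adjustments the argument closes.
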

The first inequality is a classical result in nonlocal analysis, which can be found in \cite{shi2017convergence} and the second inequality was proved in \cite{wang2023nonlocal}. 
\begin{lemma}
  \label{lemma-2}
  There exists a constant $C>0$ depending only on the region, such that 
  \begin{align*}
    &\norm{\uL}_{L^2(\OL)}^2+\norm{\bbuNL}_{L^2(\ONL)}^2\\
    \leq& C\biggl[\norm{\uL-\bbuNL}_{L^2(\Gamma)}+\left(\norm{\nabla\uL}_{L^2(\OL)}^2+\norm{\nabla\bbuNL}_{L^2(\ONL)}^2\right)\\
&+\left(\int_{\OL}\uL(\xx)\d\xx+\int_{\ONL}\bbuNL(\xx)\d\xx\right)^2\biggr]
  \end{align*}
  for arbitrary $\uL\in H^1(\OL)$ and $\bbuNL\in H^1(\ONL)$.
\end{lemma}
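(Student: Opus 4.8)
The plan is to establish the estimate by contradiction, via a compactness (Rellich--Kondrachov) argument in the spirit of the classical proof of the Poincar\'e--Wirtinger inequality. Assume the inequality fails for every constant. Then for each positive integer $k$ there is a pair $(\uL^{(k)},\bbuNL^{(k)})\in H^1(\OL)\times H^1(\ONL)$ violating it with constant $k$, and after rescaling each pair by a suitable positive factor we may assume the normalization $\norm{\uL^{(k)}}_{L^2(\OL)}^2+\norm{\bbuNL^{(k)}}_{L^2(\ONL)}^2=1$, which forces the entire right--hand side of the claimed inequality to tend to $0$. In particular $\norm{\nabla\uL^{(k)}}_{L^2(\OL)}\to 0$ and $\norm{\nabla\bbuNL^{(k)}}_{L^2(\ONL)}\to 0$, $\norm{\uL^{(k)}-\bbuNL^{(k)}}_{L^2(\Gamma)}\to 0$, and $\int_{\OL}\uL^{(k)}(\xx)\,\d\xx+\int_{\ONL}\bbuNL^{(k)}(\xx)\,\d\xx\to 0$.

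Next I would extract a limit. The normalization and the vanishing of the gradients make $\{\uL^{(k)}\}$ bounded in $H^1(\OL)$ and $\{\bbuNL^{(k)}\}$ bounded in $H^1(\ONL)$; since $\OL$ and $\ONL$ are bounded smooth (hence Lipschitz) domains, Rellich--Kondrachov yields, along a subsequence, strong convergence $\uL^{(k)}\to u^\ast$ in $L^2(\OL)$ and $\bbuNL^{(k)}\to v^\ast$ in $L^2(\ONL)$, with weak $H^1$ limits. Since the gradients converge strongly to $0$, the convergence is strong in $H^1$ and $\nabla u^\ast=0$, $\nabla v^\ast=0$; because $\OL$ and $\ONL$ are connected, $u^\ast\equiv c_1$ and $v^\ast\equiv c_2$ for constants $c_1,c_2$. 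Strong $L^2$ convergence preserves the normalization, so $c_1^2|\OL|+c_2^2|\ONL|=1$; in particular $(c_1,c_2)\neq(0,0)$.

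Then I would pass to the limit in the two remaining terms. Continuity of the trace operators $H^1(\OL)\to L^2(\Gamma)$ and $H^1(\ONL)\to L^2(\Gamma)$, combined with the strong $H^1$ convergence, gives $\uL^{(k)}|_\Gamma\to c_1$ and $\bbuNL^{(k)}|_\Gamma\to c_2$ in $L^2(\Gamma)$, so $\norm{c_1-c_2}_{L^2(\Gamma)}=\lim_k\norm{\uL^{(k)}-\bbuNL^{(k)}}_{L^2(\Gamma)}=0$, which forces $c_1=c_2=:c$. Passing to the limit in the mean term gives $c\,|\Omega|=c\,(|\OL|+|\ONL|)=\lim_k\bigl(\int_{\OL}\uL^{(k)}+\int_{\ONL}\bbuNL^{(k)}\bigr)=0$, hence $c=0$, which contradicts $c_1^2|\OL|+c_2^2|\ONL|=1$. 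This contradiction proves the lemma.

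The step I expect to be the main obstacle is the reduction to constants together with the identification of the boundary traces on $\Gamma$: one must use that $\Gamma$ is a component of the boundary of each of the two subdomains, so that the traces of $\uL^{(k)}$ and of $\bbuNL^{(k)}$ on $\Gamma$ both make sense and converge in $L^2(\Gamma)$, and that each of $\OL$, $\ONL$ is connected, so that a gradient-free $H^1$ function on it is genuinely constant. The other ingredients --- $H^1$ boundedness, Rellich compactness, and passing to the limit --- are routine; the remaining delicate point is the rescaling used to normalize the contradiction sequence, which I would handle by setting up the contradiction hypothesis directly as the existence of a normalized sequence with $\norm{\uL^{(k)}}_{L^2(\OL)}^2+\norm{\bbuNL^{(k)}}_{L^2(\ONL)}^2=1$ whose right--hand side tends to $0$.
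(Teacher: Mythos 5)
Your argument is the standard normalization--compactness--contradiction proof, which is exactly the route the paper indicates: the paper does not prove this lemma itself, it only remarks that it is a Poincar\'e-type inequality provable ``by contradiction like the classical Poincar\'e inequality'' and defers to the cited reference. For the inequality that the paper actually uses later in the coercivity estimate (\ref{coercivity proof}) --- i.e.\ with $\norm{\uL-\bbuNL}_{L^2(\Gamma)}^2$ on the right-hand side --- your proof is complete and correct: normalization, Rellich compactness on each of the two bounded Lipschitz subdomains, reduction to constants via connectedness, identification of the traces on $\Gamma$, and the vanishing combined mean give the contradiction.

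The one genuine issue is that the statement as printed has the trace term to the \emph{first} power, so the inequality is not homogeneous of degree two, and this is precisely where your normalization step breaks. If $\norm{\uL^{(k)}}_{L^2(\OL)}^2+\norm{\bbuNL^{(k)}}_{L^2(\ONL)}^2>1$, rescaling by a factor $t_k<1$ multiplies the trace term by $t_k$ rather than $t_k^2$, so from the violated inequality you can only conclude $t_k\norm{\uL^{(k)}-\bbuNL^{(k)}}_{L^2(\Gamma)}\to 0$, not $\norm{\uL^{(k)}-\bbuNL^{(k)}}_{L^2(\Gamma)}\to 0$; and the fix you propose --- postulating directly a normalized sequence whose right-hand side tends to zero --- is not the negation of the inhomogeneous statement, so it only yields the inequality on the unit sphere of $L^2(\OL)\times L^2(\ONL)$, from which the general case does not follow by scaling. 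In fact the literal statement is false: take $\uL\equiv t c_1$ on $\OL$ and $\bbuNL\equiv t c_2$ on $\ONL$ with $c_1\left|\OL\right|+c_2\left|\ONL\right|=0$ and $(c_1,c_2)\neq(0,0)$; then the left-hand side grows like $t^2$ while the right-hand side grows only like $t$, since the gradient and mean terms vanish. So the exponent on the trace term is a typo in the lemma; state and prove the squared version (which is what (\ref{coercivity proof}) uses), and then your argument goes through verbatim.
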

This lemma is a Poincar$\mathrm{\acute{e}}$-type inequality, which can be proved by contradiction like the classical Poincar$\mathrm{\acute{e}}$'s inequality.  The proof can be found in \cite{zhang2021nonlocal}.
\begin{lemma}
  \label{lemma-3}
  For $(\uL,\uNL)\in \tilde{H}$ and $\bbuNL$ defined in (\ref{notation1}), there exists a constant $C>0$ independent of $\delta$ such that
  \begin{align}
  \label{Lemma equ4}
  \left(\int_{\OL}\uL(\xx)\d\xx+\int_{\ONL}\bbuNL(\xx)\d\xx\right)^2
  \leq C\int_{\ONL}\int_{\ONL}\Rd (\uNL(\xx)-\uNL(\yy))^2\d\xx\d\yy.
  \end{align}
\end{lemma}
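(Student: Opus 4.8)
The plan is to exploit the zero-mean constraint built into $\tilde H$ to rewrite the left-hand side as a quantity controlled by $\norm{\bbuNL-\uNL}_{L^2(\ONL)}$, and then invoke the second estimate of Lemma \ref{lemma-1}. Concretely, since $(\uL,\uNL)\in\tilde H$ we have $\int_{\OL}\uL(\xx)\d\xx=-\int_{\ONL}\uNL(\xx)\d\xx$, so
\begin{equation*}
  \int_{\OL}\uL(\xx)\d\xx+\int_{\ONL}\bbuNL(\xx)\d\xx
  =\int_{\ONL}\bigl(\bbuNL(\xx)-\uNL(\xx)\bigr)\d\xx.
\end{equation*}
This is the key observation: the combination of local and nonlocal averages appearing on the left collapses, modulo the constraint, to the mean over $\ONL$ of the ``smoothing defect'' $\bbuNL-\uNL$.

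Next I would apply the Cauchy--Schwarz inequality on $\ONL$:
\begin{equation*}
  \left(\int_{\ONL}\bigl(\bbuNL(\xx)-\uNL(\xx)\bigr)\d\xx\right)^2
  \le |\ONL|\,\norm{\bbuNL-\uNL}_{L^2(\ONL)}^2,
\end{equation*}
where $|\ONL|$ is the (finite, $\delta$-independent) Lebesgue measure of $\ONL$. Finally, the second inequality of Lemma \ref{lemma-1} bounds $\norm{\bbuNL-\uNL}_{L^2(\ONL)}^2$ by $C_2\int_{\ONL}\int_{\ONL}\Rd(\uNL(\xx)-\uNL(\yy))^2\d\xx\d\yy$, and chaining the three estimates gives the claim with $C=|\ONL|\,C_2$, which is independent of $\delta$.

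There is essentially no serious obstacle here: all the analytic difficulty (the nonlocal Poincar\'e-type control of the averaging defect) has been absorbed into Lemma \ref{lemma-1}, and the remaining work is the bookkeeping identity from the constraint plus Cauchy--Schwarz. The only point to be careful about is the sign/direction of the constraint manipulation and confirming that the constant produced is genuinely $\delta$-free, which is immediate since $|\ONL|$ and $C_2$ do not depend on $\delta$.
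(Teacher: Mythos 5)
Your argument matches the paper's proof step for step: use the zero-mean constraint from $\tilde H$ to rewrite the left side as $\bigl(\int_{\ONL}(\bbuNL-\uNL)\bigr)^2$, apply Cauchy--Schwarz, then invoke the second inequality of Lemma \ref{lemma-1}. The reasoning and the resulting $\delta$-independent constant are both correct.
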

\begin{proof}
  Since $(\uL,\uNL)\in \tilde{H}$, we have 
  \begin{align*}
    \int_\OL \uL(\xx)\d\xx+\int_\ONL \uNL(\xx)\d\xx=0.
  \end{align*}
  Now we can get
  \begin{align*}
    &\left(\int_{\OL}\uL(\xx)\d\xx+\int_{\ONL}\bbuNL(\xx)\d\xx\right)^2\\
  =&\left(\int_{\ONL}\bbuNL(\xx)\d\xx-\int_{\ONL}\uNL(\xx)\d\xx\right)^2\\
  \leq&C\int_{\ONL}(\bbuNL(\xx)-\uNL(\xx))^2\d\xx\\
  \leq&C\int_{\ONL}\int_{\ONL}\Rd (\uNL(\xx)-\uNL(\yy))^2\d\xx\d\yy,
  \end{align*}
  where the last inequality is ensured by Lemma {\ref{lemma-1}}.
\end{proof}

With these lemmas, we are ready to prove the coercivity. By a simple calculation,
\begin{align*}
	&B[\uL,\uNL;\uL,\uNL]\\
	=&\lambda_1\int_{\OL}\left|\nabla \uL(\xx)\right|^2\d\xx+\lambda_2\int_{\Gamma}\uG(\uL,\uNL)(\xx)\left(\uL(\xx)-\bbuNL(\xx)\right)\d S_\xx\\
	&\quad+\dfrac{\lambda_2}{2\delta^2}\int_{\ONL}\Rd (\uNL(\xx)-\uNL(\yy))^2\d\xx\d\yy\\
	=&\lambda_1\int_{\OL}\left|\nabla \uL(\xx)\right|^2\d\xx+\lambda_2\int_{\Gamma}\zeta_\delta(\xx)u_\Gamma^2(\uL,\uNL)(\xx)S_\xx\\
  &\hspace{3cm}+\dfrac{\lambda_2}{2\delta^2}\int_{\ONL}\Rd (\uNL(\xx)-\uNL(\yy))^2\d\xx\d\yy.
\end{align*}
Since $C_1\delta\leq \zeta_\delta(\xx)\leq C_2\delta$ when $\xx\in\Gamma$, the second term above can have a lower bound 
\begin{align}
\int_{\Gamma}\zeta_\delta(\xx)\uG^2(\uL,\uNL)(\xx)&=\int_{\Gamma}\dfrac{1}{\zeta_\delta(\xx)}\left(\uL(\xx)-\bbuNL(\xx)\right)^2\d S_\xx\nonumber\\
	&\geq\dfrac{C}{\delta}\int_{\Gamma}\left(\uL(\xx)-\bbuNL(\xx)\right)^2\d S_\xx\notag\\
	&\geq C\norm{\uL-\bbuNL}_{L^2(\Gamma)}^2.\label{boundary diff}
\end{align} 

Now we have the following estimation
\begin{align}
  &\norm{\uL}_{H^1(\OL)}^2+\norm{\uNL}_{L^2(\ONL)}^2\notag\\
  \leq&C\left(\norm{\uL}_{L^2(\OL)}^2+\norm{\bbuNL(\xx)}_{L^2(\ONL)}^2\right)+C\norm{\uNL-\bbuNL}_{L^2(\ONL)}^2+\norm{\nabla \uL}_{L^2(\OL)}^2\notag\\
  \leq&C\biggl[\left(\norm{\nabla\uL}_{L^2(\OL)}^2+\norm{\nabla\bbuNL}_{L^2(\ONL)}^2\right)+\left(\int_{\OL}\uL(\xx)\d\xx+\int_{\ONL}\bbuNL(\xx)\d\xx\right)^2\notag\\
  &+\norm{\uL-\bbuNL}_{L^2(\Gamma)}^2\biggr]+C \int_{\ONL}\int_{\ONL}\Rd (\uNL(\xx)-\uNL(\yy))^2\d\xx\d\yy+\norm{\nabla \uL}_{L^2(\OL)}^2\notag\\
  \leq&C\int_{\Gamma}\zeta_\delta(\xx)\uG^2(\uL,\uNL)(\xx)+C\int_{\ONL}\left|\nabla \uL(\xx)\right|^2\d\xx\notag\\
 &\hspace{3cm}+\dfrac{C}{2\delta^2}\int_{\ONL}\int_{\ONL}\Rd (\uNL(\xx)-\uNL(\yy))^2\d\xx\d\yy\notag\\
  \leq&C B[\uL,\uNL;\uL,\uNL].\label{coercivity proof}
\end{align}
This is exactly the coercivity.

Since the conditions of Lax-Milgram theorem have been verified, we can conclude Theorem \ref{theorem: weak existence and uniqueness}.

However, Theorem \ref{theorem: weak existence and uniqueness} is not equivalent to the existence and uniqueness of the solution to our local-nonlocal coupling model. 
To bridge the gap, we should further illustrate when $(f_1,f_2)$ is specified as $(f_L,f_{NL})$, the solution $(\uL,\uNL)$ in Theorem \ref{theorem: weak existence and uniqueness} satisfying 
\begin{align*}
  B[\uL,\uNL;\vL,\vNL]=(f_L,f_{NL};\vL,\vNL),\quad \forall \vL\in H^1(\OL),\forall \vNL\in L^2(\ONL).
 \end{align*}
In fact, for $\vL\in H^1(\OL)$, $\vNL\in L^2(\ONL)$ and a constant 
\[c=\dps\dfrac{1}{\left|\Omega\right|}\left(\int_{\OL}\vL(\xx)\d\xx+\int_{\ONL}\vNL(\xx)\d\xx\right),\] 
take $\bar{v}_{L}(\xx)=\vL(\xx)-c$ and $\bar{v}_{NL}(\xx)=\vNL(\xx)-c$, then $(\bar{v}_{L},\bar{v}_{NL})\in \tilde{H}.$  Now 
\begin{align*}
(f_L,f_{NL};\vL,\vNL)&=(f_L, f_{NL};\bar{v}_L+c,\bar{v}_{NL}+c)\\
&=c\left(\int_{\OL}f_L(\xx)\d\xx+\int_{\ONL}f_{NL}(\xx)\d\xx\right)+(f_L, f_{NL};\bar{v}_L,\bar{v}_{NL})\\
&=(f_L, f_{NL};\bar{v}_L,\bar{v}_{NL}),
\end{align*} 
and 
\begin{align*}
	B[\uL,\uNL;\vL,\vNL]&= B[\uL,\uNL; \bar{v}_L+c,\bar{v}_{NL}+c]\\
	&=B[\uL,\uNL; \bar{v}_L,\bar{v}_{NL}]+c\int_\Gamma \uG(\uL,\uNL)(\xx)\d S_\xx\\
	&\hspace{2cm}-c\int_{\Gamma} \uG(\uL,\uNL)(\yy)\dfrac{1}{\bar{w}_\delta(\yy)}\int_{\ONL}\bRd\d\xx\d S_\yy\\
	&=B[\uL,\uNL; \bar{v}_L,\bar{v}_{NL}].
\end{align*}
Because $B[\uL,\uNL;\bar{v}_L,\bar{v}_{NL}]=(f_L, f_{NL};\bar{v}_L,\bar{v}_{NL})$, this equation holds true for $\vL\in H^1(\OL)$, $\vNL\in L^2(\ONL)$.   
If we take $\vNL=0$, we can get for any $v_L\in H^1(\ONL)$
\begin{align}
  &\lambda_1\int_{\OL}\nabla u_L(\xx)\cdot\nabla \vL(\xx)\d\xx+ \lambda_2 \int_{\Gamma}\uG(\uL,\uNL)(\xx)\vL(\xx)\d\xx=\int_{\OL}f_L(\xx)\vL(\xx)\d\xx.\label{weak solution}
\end{align} 
In addition, if $\vL=0$ and $\vNL$ is taken as  
\begin{align*}
  \vNL (\xx)&=\dfrac{\lambda_2}{\delta^2}\int_{\ONL}\Rd (\uNL(\xx)-\uNL(\yy))\d\xx\notag\\
  &\hspace{3cm}-\lambda_2\int_{\Gamma}\bRd \dfrac{u_\Gamma(\uL,\uNL)(\yy)}{\bar{w}_\delta(\yy)}\d S_\yy-f_{NL}(\xx),
\end{align*}
we can obtain when $x\in\ONL$,
\begin{align}
  \dfrac{\lambda_2}{\delta^2}\int_{\ONL}\Rd (\uNL(\xx)-\uNL(\yy))\d\xx-\lambda_2\int_{\Gamma}\bRd \dfrac{u_\Gamma(\uL,\uNL)(\yy)}{\bar{w}_\delta(\yy)}\d S_\yy
  =f_{NL}(\xx).\label{coupling eq2}
\end{align}
Since $\uG(\uL,\uNL)$ is determined by $\uL$ and $\uNL$ according to the third equation in (\ref{coupling system}), taking a function $\uG(\xx)=\uG(\uL,\uNL)(\xx),\ \xx\in \Gamma$, $(\uL,\uNL,\uG)$ satisfies the third 
equation in (\ref{coupling system}). Substituting $\uG$ into the two results above, we can find (\ref{coupling eq2}) is exactly the second equation in (\ref{coupling system}) and (\ref{weak solution}) implies $\vL$ is the 
standard weak solution of the local part in (\ref{coupling system}). Here we have proved the existence and uniqueness of the solution to our model.

To complete the proof of Theorem \ref{THEOREM:WELL-POSEDNESS}, we define a new bilinear form $\hat B:\hat{H}\times \hat{H}\rightarrow \RR$, 
\begin{align*}
  &\hat{B}[\uL,\uNL,\uG;\vL,\vNL,\vG]\\
	=&\lambda_1\int_{\OL}\nabla \uL(\xx)\cdot\nabla \vL(\xx)\d\xx+\lambda_2\int_{\Gamma}\uG(\xx)\vL(\xx)\d S_\xx\\
	&+\dfrac{\lambda_2}{2\delta^2}\int_{\ONL}\Rd (\uNL(\xx)-\uNL(\yy))(\vNL(\xx)-\vNL(\yy))\d\xx\\
  &-\lambda_2\int_{\ONL}\vNL(\xx)\int_{\Gamma}\bRd \dfrac{\uG(\yy)}{\bar{w}_\delta(\yy)}\d S_\yy\d\xx\\
	&-\lambda_2\int_\Gamma \vG(\xx)\left(u_L(\xx)-\dfrac{1}{\bar{w}_\delta(\xx)}\int_{\ONL}\bRd u_{NL}(\yy)\d\yy\right)\d S_\xx+\lambda_2\int_{\Gamma}\zeta_{\delta}(\xx)u_{\Gamma}(\xx)v_\Gamma(\xx)\d S_\xx.
\end{align*}

A simple calculation gives 
\begin{align*}
  &\hat{B}[\uL,\uNL,\uG;\uL,\uNL,\uG]\\
	=&\lambda_1\int_{\OL}\left|\nabla \uL(\xx)\right|^2\d\xx+\lambda_2\int_\Gamma \zeta_\delta(\xx)\uG^2(\xx)\d S_\xx\\
  &+\dfrac{\lambda_2}{2\delta^2}\int_{\ONL}\int_{\ONL}\bRd(\uNL(\xx)-\uNL(\yy))^2\d\xx\d\yy
\end{align*}

Under the precondition $(\uL,\uNL,\uG)\in \hat{H}$ is the solution of our model, we can derive the following estimation
\begin{equation}
  \label{triple coercivity}
  \hat{B}[\uL,\uNL,\uG;\uL,\uNL,\uG]\geq C\left(\norm{\uL}_{H^1(\OL)}^2+\norm{\uNL}_{L^2(\ONL)}^2+\delta\norm{\uG}_{L^2(\Gamma)}^2\right)
\end{equation}
via a same way in the preceding proof of coercivity.
\begin{remark}
  \label{remark: bilinear form}
  It is notable (\ref{triple coercivity}) only holds when $(\uL,\uNL,\uG)$ solves (\ref{coupling system}), which means we should not use the bilinear form $\hat{B}$ to 
  prove the existence and uniqueness. Otherwise, these three functions are independent of each other, which results in the coupling condition on interface are violated. 
  The reason we solve $\uG$ from (\ref{coupling system}) in advance to define $B$ is to ascertain the relation between $\uG$ and $(\uL,\uNL)$, which is crucial to control $\norm{\uL-\bbuNL}_{L^2(\Gamma)}^2$
  in (\ref{boundary diff}) and (\ref{coercivity proof}).
\end{remark}

Next, we further prove $\uNL\in H^1(\ONL)$ and estimate its $H^1$ norm. In fact, from (\ref{coupling system}), we can get 
\begin{align*}
  \uNL(\xx)=\buNL(\xx)+\dfrac{\delta^2}{w_\delta(\xx)}\int_{\Gamma} \bRd \dfrac{\uG(\yy)}{\bar{w}_\delta(\yy)}\d S_\yy+\dfrac{\delta^2}{\lambda_2w_\delta(\xx)}f_{NL}(\xx),
\end{align*}
where $\buNL(\xx)$ is defined in (\ref{notation1}). Similar to Lemma \ref{lemma-1}, the following estimation also holds.
\begin{align}
  \norm{\nabla\buNL}_{L^2(\ONL)}^2\leq \dfrac{C}{2\delta^2}\int_{\ONL}\int_{\ONL}\Rd (\uNL(\xx)-\uNL(\yy))^2\d\xx\d\yy.\label{nabla baru}
\end{align}
Meanwhile, 
\begin{align*}
  &\nabla\left(\dfrac{\delta^2}{w_\delta(\xx)}\int_{\Gamma} \bRd \dfrac{\uG(\yy)}{\bar{w}_\delta(\yy)}\d S_\yy\right)\\
  =&\frac{\delta^2}{\wdd(\xx)}\int_{\Gamma}\nabla_\xx\bRd\dfrac{\uG(\yy)}{\bar{w}_\delta(\yy)}\d S_\yy-\frac{\delta^2\nabla \wdd(\xx)}{\wdd^2(\xx)}\int_{\Gamma} \bRd \dfrac{\uG(\yy)}{\bar{w}_\delta(\yy)}\d S_\yy\\
  \overset{d}{=}&I_1(\xx)+I_2(\xx).
\end{align*}
We can estimate 
\begin{align}
  \norm{I_1}_{L^2(\ONL)}^2
=&\delta^4\int_{\ONL}\dfrac{1}{\wdd^2(\xx)}\left(\int_{\Gamma}\Rd\frac{\xx-\yy}{2\delta^2}\dfrac{\uG(\yy)}{\bar{w}_\delta(\yy)}\d S_\yy\right)^2\d \xx\notag\\
\leq&C\delta^2\int_{\ONL}\left(\int_{\Gamma}\Rd\d S_\yy\right)\left(\int_{\Gamma}\Rd \uG^2(\yy)\d S_\yy\right)\d\xx\notag\\
\leq&C\delta\int_{\Gamma}\uG^2(\yy)\left(\int_{\ONL}\Rd\d \xx\right)\d S_\yy 
\leq C\int_\Gamma\zeta_\delta(\xx)\uG^2(\xx)\d S_\xx,\label{I1}
\end{align}
and 
\begin{align}
  \norm{I_2}_{L^2(\ONL)}^2
  =&\delta^4\int_{\ONL}\dfrac{1}{\wdd^4(\xx)}\left(\int_{\Gamma}\bRd\dfrac{\uG(\yy)}{\bar{w}_\delta(\yy)}\d S_\yy\right)^2\left(\int_{\ONL}R_\delta'(\xx,\yy)\frac{\xx-\yy}{2\delta^2}\d\yy\right)^2\d\xx\notag\\
  \leq&C\delta^2\int_{\ONL}\left(\int_{\Gamma}\bRd\d S_\yy\right)\left(\int_{\Gamma}\bRd \uG^2(\yy)\d S_\yy\right)\d\xx\notag\\
  \leq&C\delta\int_{\Gamma}\uG^2(\yy)\left(\int_{\ONL}\bRd\d \xx\right)\d S_\yy
\leq C\int_\Gamma\zeta_\delta(\xx)\uG^2(\xx)\d S_\xx.\label{I2}
\end{align}
In addition, 
\begin{align*}
  &\nabla\left(\dfrac{\delta^2}{\lambda_2w_\delta(\xx)}f_{NL}(\xx)\right)\\
  &=\dfrac{\delta^2}{\lambda_2\wdd(\xx)} \int_{\ONL}\nabla_\xx\bRd f(\yy)\d\yy+\dfrac{\delta^2\nabla \wdd(\xx)}{\lambda_2\wdd^2(\xx)}\left(\int_{\ONL}\bRd f(\yy)\d\yy+\bar{f}\right)\\
  &\overset{d}{=}J_1(\xx)+J_2(\xx).
\end{align*}
$J_1(\xx)$ can be estimated as 
\begin{align*}
  \norm{J_1}_{L^2(\ONL)}^2
=&\dfrac{\delta^4}{\lambda_2}\int_{\ONL}\dfrac{1}{\wdd^2(\xx)}\left(\int_{\ONL}\Rd \frac{\xx-\yy}{2\delta^2}f(\yy)\d\yy\right)^2\d\xx\\
\leq& C\delta^2\int_\ONL f^2(\yy)\left(\int_{\ONL}\Rd\d\xx\right)\d\yy\\
\leq& C\delta^2\norm{f}_{L^2(\Omega)}.
\end{align*}
With the help of Lemma \ref{LEMMA:F}, we have  
\begin{align*}
  \norm{J_2(\xx)}_{L^2(\ONL)}^2
  =&\dfrac{\delta^4}{\lambda_2}\int_{\ONL}\dfrac{1}{\wdd^4(\xx)}\left(\int_{\ONL}\bRd f(\yy)\d\yy+\bar{f}\right)^2\left(\int_{\ONL}R'_\delta(\xx,\yy)\dfrac{\xx-\yy}{2\delta^2}\right)^2\d\xx\\
  \leq&C\delta^2\int_{\ONL}\left(\int_{\ONL}\bRd f(\yy)\d\yy+\bar{f}\right)^2\d\xx\\
  \leq&C\delta^2\left|\bar{f}\right|^2+C\delta^2\int_{\ONL}\left(\int_{\ONL}\bRd f(\yy)\d\yy\right)^2\d\xx\\
  \leq&C\delta^2\norm{f}_{L^2(\ONL)}^2.
\end{align*}

Based on these estimations above, we can conclude that 
\begin{align}
  \norm{\nabla \uNL}_{L^2(\ONL)}^2&\leq C\delta^2\norm{f}_{L^2(\ONL)}^2+C\int_{\Gamma}\zeta_\delta(\xx)\uG(\xx)\d S_\xx\notag\\
  &\hspace{0.5cm}+ \dfrac{C}{2\delta^2}\int_\ONL\int_\ONL\Rd (\uNL(\xx)-\uNL(\yy))^2\d\xx\d\yy\label{grad estimation}.
\end{align}
Combine (\ref{grad estimation}) and (\ref{triple coercivity}), we get the following result 
\begin{align*}
  &\norm{\uL}_{H^1(\OL)}^2+\norm{\uNL}_{H^1(\ONL)}^2+\delta\norm{\uG}_{L^2(\Gamma)}^2\leq C\hat{B}[\uL,\uNL,\uG;\uL,\uNL,\uG]+C\delta^2\norm{f}_{L^2(\Omega)}^2.
\end{align*}
Furthermore, $(\uL,\uNL,\uG)$ solves (\ref{coupling system}) implies 
\begin{align*}
  &\hat{B}[\uL,\uNL,\uG;\uL,\uNL,\uG]\\
  =&(f_L,f_{NL},0;\uL,\uNL,\uG)\\
  =&\int_{\OL}f_L(\xx)\uL(\xx)\d\xx+\int_{\ONL}f_{NL}(\xx)\uNL(\xx)\d\xx\\
  \leq& C(\epsilon)\left(\norm{f_L}_{L^2(\OL)}^2+\norm{f_{NL}}_{L^2(\OL)}^2\right)+\epsilon\left(\norm{\uL}_{L^2(\OL)}^2+\norm{\uNL}_{L^2(\ONL)}^2\right)\\
  \leq& C(\epsilon)\norm{f}_{L^2(\Omega)}^2+\epsilon\left(\norm{\uL}_{H^1(\OL)}^2+\norm{\uNL}_{H^1(\ONL)}^2\right)
\end{align*}
Take $\epsilon$ small enough, we can get the following estimation 
\begin{align*}
  \norm{\uL}_{H^1(\OL)}^2+\norm{\uNL}_{H^1(\ONL)}^2+\delta\norm{\uG}_{L^2(\Gamma)}^2\leq C\norm{f}_{L^2(\Omega)}^2.
\end{align*}
Here we have completed the proof of Theorem \ref{THEOREM:WELL-POSEDNESS}.

\section{Proof of convergence(Theorem \ref{THEOREM:CONVERGENCE})}
\label{sec:convergence proof}
In this section, we present the proof of Theorem \ref{THEOREM:CONVERGENCE}. A system about the errors will be established, and the truncation errors will be analyzed.
We will also build a new estimation like the coercivity, which will help us derive the convergence result.
\subsection{Truncation error analysis}
Our local-nonlocal coupling model is modified from (\ref{PIM}) and (\ref{interfaceequ1}). Before analyzing the convergence order, we should be clear about the truncation errors. 
\begin{proposition}
  \label{PROP:TRUNCATION ERROR INTERFACE}
  Let $u\in H^1(\Omega)\cap H^3(\OL)\cap H^3(\ONL)$ solves (\ref{original system}), the truncation error 
  \begin{align*}
    r_\Gamma(\xx)=-\lambda_2\left(u(\xx)-\dfrac{1}{\bwd(\xx)}\int_{\ONL}\bRd u(\yy)\d\yy\right)+\lambda_2\zeta_\delta(\xx)\pd{u}{\nn}(\xx),\ \xx\in\Gamma
  \end{align*}
  has the following estimation 
  \begin{align*}
    \norm{r_\Gamma}_{L^2(\Gamma)}\leq C\delta^2 \norm{u}_{H^3(\ONL)}.
  \end{align*}
\end{proposition}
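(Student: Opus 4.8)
The plan is to expand $u(\yy)$ around $\xx\in\Gamma$ using Taylor's theorem and exploit the normalization and symmetry properties of the kernels. First I would write
\[
u(\xx)-\frac{1}{\bwd(\xx)}\int_{\ONL}\bRd u(\yy)\,\d\yy
=\frac{1}{\bwd(\xx)}\int_{\ONL}\bRd\bigl(u(\xx)-u(\yy)\bigr)\,\d\yy,
\]
and Taylor-expand $u(\yy)=u(\xx)+\nabla u(\xx)\cdot(\yy-\xx)+\tfrac12(\yy-\xx)^\T D^2u(\xx)(\yy-\xx)+\cdots$. The leading nonzero term is the gradient term: its integral over $\ONL$ does not vanish because $\ONL$ is only a half-space-like region near $\Gamma$ (the kernel is symmetric, but $\ONL$ is not symmetric about $\xx\in\Gamma$). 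Flattening the boundary locally and using that near $\Gamma$ the domain looks like a half-space with inner normal $-\nn$, the tangential components integrate to zero by symmetry, while the normal component produces exactly a term proportional to $\delta^2\,\partial u/\partial\nn(\xx)$ times $\int_{\Gamma}\bbRd\,\d S_\yy$ (after an integration-by-parts / change of the $\bar R\to\bar{\bar R}$ type identity, which is the mechanism behind the point integral method). The coefficient is arranged precisely so that this matches $\lambda_2\zeta_\delta(\xx)\,\partial u/\partial\nn(\xx)$, and hence cancels in $r_\Gamma$.

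The key steps in order: (i) reduce $r_\Gamma$ to $-\frac{\lambda_2}{\bwd(\xx)}\int_{\ONL}\bRd(u(\xx)-u(\yy))\,\d\yy+\lambda_2\zeta_\delta(\xx)\partial u/\partial\nn(\xx)$ and Taylor-expand with integral remainder; (ii) split the first-order term into tangential and normal parts, show the tangential part vanishes up to $O(\delta^2\|u\|_{H^3})$ by oddness of the kernel together with the smoothness of $\Gamma$ (the deviation of $\ONL$ from the flat half-space contributes a curvature term of the right order); (iii) identify the normal part, via the identity relating $\int\bar R(|\xx-\yy|^2/4\delta^2)$-type integrals over a half-space to $\delta^2\int_{\Gamma}\bar{\bar R}$-type boundary integrals, with $\lambda_2\zeta_\delta(\xx)\partial u/\partial\nn(\xx)$, so that these two cancel; (iv) bound the second-order Taylor term, which carries a factor $|\yy-\xx|^2\sim\delta^2$ against the kernel, giving $O(\delta^2\|D^2u\|)$, and the remainder term similarly using $u\in H^3(\ONL)$ together with the boundary-value estimates of the Proposition on kernel integrals and a trace/Hardy-type argument to control things near $\Gamma$; (v) take $L^2(\Gamma)$ norms, using $\bwd(\xx)\ge C$ and $\zeta_\delta(\xx)\le C\delta$ on $\Gamma$ from the kernel estimates, to collect everything into $C\delta^2\|u\|_{H^3(\ONL)}$.

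The main obstacle I expect is step (ii)–(iii): the careful bookkeeping near $\Gamma$. Because $\xx$ sits exactly on the interface, the integral $\int_{\ONL}$ is over a region whose boundary passes through $\xx$, so one must localize with a boundary-fitted coordinate system, control the error between $\ONL$ and its tangent half-space (a curvature-order perturbation), and make sure the cancellation of the first-order normal term against $\zeta_\delta\,\partial u/\partial\nn$ is exact rather than merely order-consistent — this is precisely where the specific definitions of $\bar R,\bar{\bar R}$ and of $\zeta_\delta$ in (\ref{right}) are used. A secondary technical point is that $u$ has only $H^3$ regularity, so the Taylor remainder must be handled in an integrated (Sobolev) sense, and one should invoke the standard point-integral-method truncation estimates (as in \cite{shi2017convergence,li2017point}) rather than pointwise Taylor bounds; I would cite those results for the half-space model computation and only spell out the modification coming from the $\delta^2$-rescaling of the kernels used on $\Gamma$.
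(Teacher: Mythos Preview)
Your overall strategy is correct and would lead to the stated bound, but it takes a noticeably different and more laborious route than the paper's proof.

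The key difference is in how the first-order Taylor term is handled. You propose to flatten $\Gamma$ locally, compare $\ONL$ with its tangent half-space, split $\nabla u(\xx)\cdot(\yy-\xx)$ into tangential and normal parts, kill the tangential part by oddness up to a curvature error, and then match the normal part with $\lambda_2\zeta_\delta(\xx)\,\partial u/\partial\nn(\xx)$ via a half-space identity. The paper instead observes the exact algebraic relation
\[
(\yy-\xx)\,\bRd=-2\delta^2\,\nabla_\yy\bbRd,
\]
and applies the divergence theorem directly on the curved domain $\ONL$ to get
\[
\int_{\ONL}\bRd\,\nabla u(\xx)\cdot(\yy-\xx)\,\d\yy=-2\delta^2\int_\Gamma\bbRd\,\nabla u(\xx)\cdot\nn(\yy)\,\d S_\yy.
\]
This produces the term $\lambda_2\zeta_\delta(\xx)\,\partial u/\partial\nn(\xx)$ exactly, the only residual being $2\delta^2\int_\Gamma\bbRd\,\nabla u(\xx)\cdot(\nn(\xx)-\nn(\yy))\,\d S_\yy$, which is immediately $O(\delta^2)$ since $|\nn(\xx)-\nn(\yy)|\le C\delta$. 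No boundary flattening, no half-space comparison, and no separate tangential/normal splitting are needed. Your steps (ii)--(iii) are then replaced by a single line, and the curvature error you anticipate as ``the main obstacle'' simply never appears as a separate term.

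For the remaining pieces your plan matches the paper: the second-order Taylor term is bounded pointwise by $C\delta^2|D^2u(\xx)|$, and the $H^3$ remainder is handled in integral form via the change of variables $\zz=\xx+s(\yy-\xx)$ (this is the Sobolev-level argument you correctly flag in your last paragraph). So your proposal is sound; it just misses the shortcut that makes the cancellation in (iii) exact rather than approximate.
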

We put the proof of above proposition in Appendix \ref{appendix:proof2}.
\begin{proposition}
  \label{prop: truncation error nonlocal}
  Let $u\in H^1(\Omega)\cap H^3(\ONL)\cap H^3(\ONL)$ be the solution of (\ref{original system}), then the truncation error $r_{NL}$ is the summation of the following two functions
\begin{align*}
  r_{NL,1}(\xx)&=\dfrac{1}{\delta^2}\int_{\ONL}\Rd(u(\xx)-u(\yy))\d\yy-2\int_\Gamma \bRd\pd{u}{\nn}(\yy)\d S_\yy
-\int_{\ONL}\bRd f(\yy)\d\yy,\\
  r_{NL,2}(\xx)&=\int_{\Gamma}\bRd\pd{u}{\nn}(\yy)\dfrac{2\bar{w}_\delta(\yy)-1}{\bar{w}_\delta(\yy)}\d S_\yy.
\end{align*}
Furthermore, $r_{NL,1}(\xx)$ can be decomposed into $r_{NL,1}(\xx)=r_{in}(\xx)+r_{bl}(\xx)$ and 
\begin{alignat*}{3}
	\norm{r_{in}}^2_{L^2(\ONL)}&\leq C\delta^2 \norm{u}^2_{H^3(\ONL)},\quad &\norm{\nabla r_{in}}^2_{L^2(\ONL)}&\leq C \norm{u}^2_{H^3(\ONL)},\\
	\norm{r_{bl}}^2_{L^2(\ONL)}&\leq C\delta \norm{u}^2_{H^3(\ONL)},\quad&\norm{\nabla r_{bl}}^2_{L^2(\ONL)}&\leq \dfrac{C}{\delta} \norm{u}^2_{H^3(\ONL)},\\
	\norm{r_{NL,2}}^2_{L^2(\ONL)}&\leq C\delta \norm{u}^2_{H^3(\ONL)},\quad &\norm{\nabla r_{NL,2}}^2_{L^2(\ONL)}&\leq \dfrac{C}{\delta} \norm{u}^2_{H^3(\ONL)}.
\end{alignat*}
In particular, 
\[r_{bl}(\xx)=\int_{\Gamma}(x^j-y^j)n^l(\yy)\nabla^j\nabla^l u(\yy)\bRd\d S_\yy.\]
For $h\in H^1(\ONL)$, we have
\[\int_{\ONL} r_{bd}(\xx)h(\xx)\d\xx\leq C\delta \norm{u}_{H^3(\ONL)}\norm{h}_{H^1(\ONL)}.\]
\end{proposition}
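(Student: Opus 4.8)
The plan is to split the task into the two pieces named in the statement. First I would verify the decomposition $r_{NL}=r_{NL,1}+r_{NL,2}$ directly: substituting the exact solution $u$ of (\ref{original system}) and its normal trace $\pd{u}{\nn}$ into the nonlocal equation of (\ref{coupling system}) and comparing with the point integral identity (\ref{PIM}), the part $r_{NL,1}$ is the genuine truncation error of the point integral method applied in $\ONL$ to $-\lambda_2\Delta u=f$, while $r_{NL,2}$ is precisely the discrepancy between the constant weight $2$ in front of the interface integral in (\ref{PIM}) and the weight $1/\bwd(\yy)$ actually used in (\ref{coupling system}). All the remaining work is in the estimates.

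For $r_{NL,1}$ I would first take $u\in C^3(\overline{\ONL})$ and pass to $H^3$ by density at the end, all constants being $\delta$-independent. The form of $r_{NL,1}$ comes from two integrations by parts over $\ONL$ against $\bRd$, using the kernel identities $\gd_\yy\bRd=\tfrac{\xx-\yy}{2\delta^2}\Rd$, $\gd_\yy\bbRd=\tfrac{\xx-\yy}{2\delta^2}\bRd$ and a second-order Taylor expansion of $u$ about $\yy$ with integral remainder: the first integration by parts reproduces the interface term of (\ref{PIM}); the second, applied to the quadratic part, produces the interface integral $r_{bl}$ written in the statement together with volume Laplacian terms that cancel against $-\int_{\ONL}\bRd f\,\d\yy$ (since $-\lambda_2\Delta u=f$ in $\ONL$), leaving $r_{in}=-\int_{\ONL}\bRd(\xx-\yy)\cdot\gd\Delta u\,\d\yy$ plus the cubic Taylor remainder. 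The bounds on $r_{in}$ are exactly the point integral method estimates of \cite{li2017point,shi2017convergence}: the $L^2$ bound uses Young's convolution inequality (the factor $\xx-\yy$ yielding the extra $\delta$) together with Minkowski's integral inequality and a change of variables for the cubic remainder, and the $H^1$ bound goes the same way, using that an $\xx$-derivative of $\bRd$ costs $\delta^{-1}$ which is absorbed by the companion factor $|\xx-\yy|\le 2\delta$. The bounds on $r_{bl}$ are a scaling count: $r_{bl}$ is $O(1)$ and $\gd r_{bl}$ is $O(\delta^{-1})$ pointwise, both supported in the $2\delta$-collar of $\Gamma$, whose volume is $O(\delta)$, so with the kernel estimates $\int_\Gamma\bRd\,\d S\le C/\delta$, $\int_{\ONL}\bRd\,\d\xx\le C$ and the trace bound $\norm{\nabla^2u}_{L^2(\Gamma)}\le C\norm{u}_{H^3(\ONL)}$ one gets $\norm{r_{bl}}^2_{L^2(\ONL)}\le C\delta\norm{u}^2_{H^3(\ONL)}$ and $\norm{\gd r_{bl}}^2_{L^2(\ONL)}\le\tfrac{C}{\delta}\norm{u}^2_{H^3(\ONL)}$.

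The duality bound for $r_{bl}$ is the one genuinely new ingredient and the place where an extra power of $\delta$ must be won back. For $h\in H^1(\ONL)$ I would swap the order of integration, write $(x^j-y^j)\bRd=-2\delta^2\p_{x^j}\bbRd$, and integrate by parts in $\xx$ over $\ONL$, obtaining
\begin{align*}
\int_{\ONL}r_{bl}(\xx)h(\xx)\,\d\xx
&=-2\delta^2\int_\Gamma\int_\Gamma n^l(\yy)n^j(\xx)\nabla^j\nabla^l u(\yy)\,h(\xx)\,\bbRd\,\d S_\xx\,\d S_\yy\\
&\quad+2\delta^2\int_\Gamma n^l(\yy)\nabla^j\nabla^l u(\yy)\int_{\ONL}\bbRd\,\p_{x^j}h(\xx)\,\d\xx\,\d S_\yy.
\end{align*}
For the first term, Cauchy--Schwarz in the $\Gamma\times\Gamma$ variables with $\int_\Gamma\bbRd\,\d S\le C/\delta$ and the trace inequalities $\norm{\nabla^2u}_{L^2(\Gamma)}\le C\norm{u}_{H^3(\ONL)}$, $\norm{h}_{L^2(\Gamma)}\le C\norm{h}_{H^1(\ONL)}$ give $C\delta^2\cdot\delta^{-1/2}\cdot\delta^{-1/2}\norm{u}_{H^3(\ONL)}\norm{h}_{H^1(\ONL)}=C\delta\norm{u}_{H^3(\ONL)}\norm{h}_{H^1(\ONL)}$; for the second term the same estimate, now pairing $\gd h$ over $\ONL$, produces the even smaller $C\delta^{3/2}\norm{u}_{H^3(\ONL)}\norm{h}_{H^1(\ONL)}$. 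It is exactly this integration by parts that forces the trace of $h$ onto $\Gamma$, which is why $h\in H^1(\ONL)$ is the natural hypothesis, and balancing the $\delta^2$ gained from $\bbRd$ against the two $\delta^{-1/2}$ lost to the surface kernels is the crux of the argument.

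Finally, for $r_{NL,2}(\xx)=\int_\Gamma\bRd\,\pd{u}{\nn}(\yy)\,\tfrac{2\bwd(\yy)-1}{\bwd(\yy)}\,\d S_\yy$ the key observation is that $2\bwd(\yy)-1$ is small on $\Gamma$: since $\bwd(\yy)=\int_{\ONL}\bar{R}_\delta(\yy,\zz)\,\d\zz$ while $\int_{\RRn}\bar{R}_\delta(\yy,\zz)\,\d\zz=1$ by (\ref{kernel normalization}), and since $\Gamma$ is $C^2$ so that after rescaling by $\delta$ the region $\ONL$ near $\yy$ differs from the tangent half-space by a set of $\delta$-mass, one has $|2\bwd(\yy)-1|\le C\delta$ for $\yy\in\Gamma$, while $\bwd(\yy)\ge C_1>0$ by the nondegeneracy of $R$. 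Hence $|r_{NL,2}(\xx)|\le C\delta\int_\Gamma\bRd\,|\pd{u}{\nn}(\yy)|\,\d S_\yy$, and Cauchy--Schwarz with $\int_\Gamma\bRd\,\d S\le C/\delta$, $\int_{\ONL}\bRd\,\d\xx\le C$ and the trace bound $\norm{\pd{u}{\nn}}_{L^2(\Gamma)}\le C\norm{u}_{H^2(\ONL)}$ gives $\norm{r_{NL,2}}^2_{L^2(\ONL)}\le C\delta\norm{u}^2_{H^3(\ONL)}$; the gradient estimate is identical, the extra $\delta^{-1}$ coming from $|\gd_\xx\bRd|\le\delta^{-1}\Rd$ on the support. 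The main obstacle is the duality bound for $r_{bl}$: every naive pointwise or scaling argument only gives $\norm{r_{bl}}_{L^2(\ONL)}=O(\delta^{1/2})$, which is too weak to close the first-order convergence of Theorem \ref{THEOREM:CONVERGENCE}, so transferring a derivative onto the test function $h$ and carefully accounting for the resulting $\Gamma$-traces of $h$ and of $\nabla^2u$ in terms of the $H^1$ and $H^3$ norms is essential.
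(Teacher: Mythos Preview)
Your proposal is correct and, for the only part the paper actually proves here, takes essentially the same route. The paper explicitly says ``Here we only prove the results about $r_{NL,2}$, the rest conclusions can be found in \cite{shi2017convergence}'': it invokes Lemma~\ref{INTEGRAL ERROR} to get $|2\bwd(\yy)-1|\le C\delta$ on $\Gamma$ and then runs the same Cauchy--Schwarz/kernel-scaling computation you describe for $\norm{r_{NL,2}}_{L^2}$ and $\norm{\nabla r_{NL,2}}_{L^2}$. For $r_{NL,1}=r_{in}+r_{bl}$ and the duality bound on $r_{bl}$ you actually supply more than the paper does, since those are deferred to the cited reference; your sketch (Taylor expansion about $\yy$, two integrations by parts using $\nabla_\yy\bRd=\frac{\xx-\yy}{2\delta^2}\Rd$, and for the duality estimate the identity $(x^j-y^j)\bRd=-2\delta^2\partial_{x^j}\bbRd$ followed by an integration by parts in $\xx$ onto $h$) is exactly the standard point integral method argument and the $\delta$-bookkeeping you give is accurate.
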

Here we only prove the results about $r_{NL,2}$, the rest conclusions can be found in \cite{shi2017convergence}. 
To derive the two estimations in Lemma \ref{prop: truncation error nonlocal}, we need the following lemma.
\begin{lemma}
  \label{INTEGRAL ERROR}
  For the kernel $\bRd$, when $\delta$ is small enough, there exists a constant $C$ independent of $\delta$ such that 
  \begin{align*}
    \left|2\int_{\ONL}\bRd\d\yy-1\right|\leq C\delta,\quad\forall\xx\in\Gamma.
  \end{align*}
\end{lemma}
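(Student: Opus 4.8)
The plan is to compare the integral $\int_{\ONL}\bRd\,\d\yy$ for $\xx\in\Gamma$ against the full-space integral $\int_{\RR^n}\bRd\,\d\yy$, whose value is known. By the normalization \eqref{kernel normalization} applied to the kernel $\bar R$, the latter equals $1$ exactly. More precisely, for any fixed $\xx$ we have $\int_{\RR^n}\alpha_n\delta^{-n}\bar R(|\xx-\yy|^2/4\delta^2)\,\d\yy=1$ (the integral is translation invariant), so $\int_{\ONL}\bRd\,\d\yy = 1 - \int_{\RR^n\setminus\ONL}\bRd\,\d\yy$. Since $\bar R$ is supported in $[0,1]$, the kernel $\bRd$ vanishes once $|\xx-\yy|\geq 2\delta$; hence the complementary integral only sees points $\yy$ with $|\xx-\yy|<2\delta$ lying outside $\ONL$. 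The goal therefore reduces to showing that this "half-ball" contribution is within $C\delta$ of $\tfrac12$, i.e. that $\int_{\RR^n\setminus\ONL}\bRd\,\d\yy = \tfrac12 + O(\delta)$, which gives $|2\int_{\ONL}\bRd\,\d\yy - 1|\leq C\delta$.

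The key geometric step is to flatten $\Gamma$ near $\xx$. Since $\Gamma$ is a $C^2$ (indeed smooth) closed hypersurface, in a neighborhood of $\xx$ of size comparable to $\delta$ it is a graph over its tangent plane $T_\xx\Gamma$ with $C^2$ bound, so it deviates from the tangent hyperplane by $O(\delta^2)$ on the ball $B(\xx,2\delta)$. First I would compute the contribution exactly when $\Gamma$ is replaced by the tangent hyperplane: by the symmetry of $\bar R(|\cdot|^2/4\delta^2)$ under reflection across that hyperplane, the half-space integral is precisely $\tfrac12\int_{\RR^n}\bRd\,\d\yy=\tfrac12$. Then I would estimate the error incurred by replacing $\Gamma$ by its tangent plane: the symmetric difference of the two half-balls is contained in a slab of thickness $O(\delta^2)$ intersected with $B(\xx,2\delta)$, a region of volume $O(\delta^2)\cdot O(\delta^{n-1})=O(\delta^{n+1})$, on which $|\bRd|\leq \alpha_n\delta^{-n}\|\bar R\|_\infty$, so the error is $O(\delta^{-n})\cdot O(\delta^{n+1})=O(\delta)$. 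Collecting terms yields $\int_{\RR^n\setminus\ONL}\bRd\,\d\yy=\tfrac12+O(\delta)$, and the claim follows.

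The main obstacle is making the "flattening" estimate uniform in $\xx\in\Gamma$: one must ensure the $C^2$ graph representation, the curvature bound, and the resulting $O(\delta^2)$ deviation all hold with constants depending only on $\Gamma$ (not on the particular point), which is where the smoothness and compactness of $\Gamma$ enter. A minor technical point is that $\xx\in\Gamma$ means $\xx$ is literally on the boundary of $\ONL$, so one should also check the trivial lower-order cases (when $\delta$ is so small that $B(\xx,2\delta)$ lies in the region where the graph coordinates are valid — guaranteed for $\delta$ small enough, as stated). Everything else is the routine change of variables $\yy=\xx+2\delta\zz$ and the volume bound above; I would not belabor those computations.
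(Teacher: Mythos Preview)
Your proposal is correct and follows essentially the same route as the paper: compare $\int_{\ONL}\bRd\,\d\yy$ with the half-space integral (which equals $\tfrac12$ by symmetry and the normalization \eqref{kernel normalization}), then bound the discrepancy by the $\delta^{-n}$ kernel amplitude times the volume $O(\delta^{n+1})$ of the region between $\Gamma$ and its tangent hyperplane inside $B(\xx,2\delta)$, using the $C^2$ regularity and compactness of $\Gamma$ for uniformity. The paper carries out the uniformity step explicitly via a finite cover of $\Gamma$ by coordinate charts and Lebesgue's number lemma, which is precisely the mechanism you allude to in your final paragraph.
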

The proof of this lemma is put in Appendix \ref{appendix:proof3}. With this lemma, 
\begin{align*}
  \norm{r_{NL,2}}_{L^2(\ONL)}^2&=\int_{\ONL}\left(\int_{\Gamma}\bRd\pd{u}{\nn}(\yy)\dfrac{2\bar{w}_\delta(\yy)-1}{\bar{w}_\delta(\yy)}\d S_\yy\right)^2\d\xx\\
  &\leq C\delta^2\int_\ONL\left(\int_\Gamma\bRd\d S_\yy\right)\left(\int_\Gamma\bRd \left|\pd{u}{\nn}(\yy)\right|^2\d S_\yy\right)\d\xx\\
  &\leq C\delta\int_\Gamma \left|\pd{u}{\nn}(\yy)\right|^2\d S_\yy
  \leq C\delta \norm{u}_{H^3(\ONL)}^2
\end{align*}
and 
\begin{align*}
  \norm{\nabla r_{NL,2}}_{L^2(\ONL)}^2&=\int_{\ONL}\left(\int_{\Gamma}\Rd \dfrac{\xx-\yy}{2\delta^2}\pd{u}{\nn}(\yy)\dfrac{2\bar{w}_\delta(\yy)-1}{\bar{w}_\delta(\yy)}\right)^2\d S_\yy\\
  &\leq C\int_\ONL\left(\int_\Gamma\bRd\d S_\yy\right)\left(\int_\Gamma\bRd \left|\pd{u}{\nn}(\yy)\right|^2\d S_\yy\right)\d\xx\\
  &\leq \dfrac{C}{\delta}\int_\Gamma \left|\pd{u}{\nn}\right|^2\d S_\yy
  \leq \dfrac{C}{\delta}\norm{u}_{H^3(\ONL)}^2.
\end{align*}

\subsection{Convergence analysis}
Let $u\in H^1(\Omega)\cap H^3(\ONL)\cap H^3(\ONL)$ be the solution to the elliptic transmission problem (\ref{original system}) and $(\uL,\uNL,\uG)$ solves our local-nonlocal coupling system (\ref{coupling system}), we denote 
\begin{align*}
  \eL(\xx)=u(\xx)-\uL(\xx);\quad \eNL(\xx)=u(\xx)-\uNL(\xx);\quad \eG(\xx)={\pd{u}{\nn}}^+(\xx)-\uG(\xx).
\end{align*}
Now we can establish the following system about $(\eL,\eNL,\eG)$ by a simple calculation,
\begin{equation}
\label{error system}
\left\{
\begin{aligned}
  &\lambda_1\int_{\OL}\nabla \eL(\xx)\cdot\nabla v(\xx)\d\xx+\lambda_2\int_{\Gamma}e_\Gamma(\xx)v(\xx)\d S_\xx=0, \quad \forall \vL\in H^1(\OL);\\
		&\dfrac{\lambda_2}{\delta^2}\int_{\ONL}\Rd (\eNL(\xx)-\eNL(\yy))\d\xx-\lambda_2\int_{\Gamma}\bRd \dfrac{e_\Gamma(\yy)}{\bwd(\yy)}\d S_\yy=r_{NL}(\xx)-\bar{f}; \\
		&- \lambda_2\left(\eL(\xx)-\dfrac{1}{\bar{w}_\delta(\xx)}\int_{\ONL}\bRd\eNL(\yy)\d\yy\right)+\lambda_2\zeta_\delta(\xx)e_\Gamma(\xx)=r_{\Gamma}(\xx).
\end{aligned} 
\right.
\end{equation}
Here $r_{NL},r_{\Gamma}$ in the right-hand side is exactly the functions in Proposition \ref{prop: truncation error nonlocal} and Proposition \ref{PROP:TRUNCATION ERROR INTERFACE}. In addition, $\bar{f}$ is the constant introduced in Lemma \ref{LEMMA:F}.
As mentioned in Remark \ref{remark: bilinear form}, the coercivity  (\ref{triple coercivity}) of bilinear form $\hat{B}$ can not hold since $r_\Gamma\neq 0$ in (\ref{error system}).
However, we can build a similar estimation 
\begin{align}
  \norm{\eL}_{H^1(\ONL)}^2+\norm{\eNL}_{L^2(\ONL)}^2+&\delta\norm{\eG}_{L^2(\Gamma)}^2
  \leq C\hat{B}[\eL,\eNL,\eG;\eL,\eNL,\eG]+\frac{C}{\delta}\norm{r_\Gamma}_{L^2(\Gamma)}^2.\label{new coercivity}
\end{align}
In fact 
\begin{equation*}
	\begin{aligned}
		&\hat{B}[\eL,\eNL,\eG;\eL,\eNL,\eG]\\
		=&\int_{\OL}\left|\nabla \eL(\xx)\right|^2\d\xx+\dfrac{1}{2\delta^2}\int_{\ONL}\int_{\ONL}\bRd(\eNL(\xx)-\eNL(\yy))^2\d\xx\d\yy+\int_\Gamma \zeta_\delta(\xx)\eG^2(\xx)\d S_\xx.
	\end{aligned}
\end{equation*}
We only need to estimate the third term. Following the notations in (\ref{notation1}),
\begin{equation*}
  \eG(\xx)=\dfrac{1}{\zeta_\delta(\xx)}\left(\dfrac{r_\Gamma(\xx)}{\lambda_2}+(\eL(\xx)-\bar{\bar{e}}_{NL}(\xx))\right).
\end{equation*}
We can get
\begin{align*}
    &\int_\Gamma\zeta_\delta(\xx)\eG^2(\xx)\d S_\xx\\
    =&\int_\Gamma \dfrac{1}{\lambda_2^2\zeta_\delta(\xx)}r_\Gamma^2(\xx)\d S_\xx+\int_\Gamma\dfrac{1}{\zeta_\delta(\xx)}(\eL(\xx)-\bbeNL(\xx))^2\d S_\xx+\int_\Gamma \dfrac{2r_\Gamma(\xx)}{\lambda_2\zeta_\delta(\xx)}(\eL(\xx)-\bbeNL(\xx))\d S_\xx\\
    \geq&\int_\Gamma \dfrac{1}{\lambda_2^2\zeta_\delta(\xx)}r_\Gamma^2(\xx)\d S_\xx+\int_\Gamma\dfrac{1}{\zeta_\delta(\xx)}(\eL(\xx)-\bbeNL(\xx))^2\d S_\xx-2\int_\Gamma \dfrac{\sqrt{2}\left|r_\Gamma(\xx)\right|}{\lambda_2\sqrt{\zeta_\delta(\xx)}}\dfrac{\left|\eL(\xx)-\bbeNL(\xx)\right|}{\sqrt{2}\sqrt{\zeta_\delta(\xx)}}\d S_\xx\\
    \geq&-\int_\Gamma \dfrac{1}{\lambda_2^2\zeta_\delta(\xx)}r_\Gamma^2(\xx)\d S_\xx+\dfrac{1}{2}\int_\Gamma\dfrac{1}{\zeta_\delta(\xx)}(\eL(\xx)-\bbeNL(\xx))^2\d S_\xx\\
    \geq&-\dfrac{C}{\delta}\int_\Gamma r_\Gamma^2(\xx)\d S_\xx+C\int_\Gamma (\eL(\xx)-\bbeNL(\xx))^2\d S_\xx,
\end{align*}
which means 
\begin{equation}
  \label{new interface estimation}
	\norm{\eL-\bbeNL}_{L^2(\Gamma)}^2\leq \dfrac{C}{\delta}\norm{r_\Gamma}_{L^2(\Gamma)}^2+C\int_\Gamma \zeta_\delta(\xx)\eG^2(\xx)\d S_\xx.
\end{equation}
Then, noticing that 
\begin{align*}
  \int_{\OL}\eL(\xx)\d\xx+\int_{\ONL}\eNL(\xx)\d\xx=0,
\end{align*}
Lemma \ref{lemma-3} also holds for $(\eL,\eNL)$. Similar to (\ref{coercivity proof}), combine Lemma \ref{lemma-1}, Lemma \ref{lemma-2}, Lemma \ref{lemma-3} and (\ref{new interface estimation}), we have 
\begin{align}
  &\norm{\eL}_{H^1(\OL)}^2+\norm{\eNL}_{L^2(\ONL)}^2+\delta\norm{\eG}_{L^2(\Gamma)}^2\notag\\
  \leq&C\norm{\eNL-\bbeNL}_{L^2(\ONL)}^2+\norm{\nabla \eL}_{L^2(\OL)}^2+\delta\norm{\eG}_{L^2(\Gamma)}^2+C\left(\norm{\eL}_{L^2(\OL)}^2+\norm{\bbeNL(\xx)}_{L^2(\ONL)}^2\right)\notag\\
  \leq& C \int_{\ONL}\int_{\ONL}\Rd (\eNL(\xx)-\eNL(\yy))^2\d\xx\d\yy+\norm{\nabla \eL}_{L^2(\OL)}^2+\int_{\Gamma}\zeta_\delta(\xx)\eG^2(\xx)\d\xx\notag\\
  &+C\biggl[\left(\norm{\nabla\eL}_{L^2(\OL)}^2+\norm{\nabla\bbeNL}_{L^2(\ONL)}^2\right)+\left(\int_{\OL}\eL(\xx)\d\xx+\int_{\ONL}\bbeNL(\xx)\d\xx\right)^2+\norm{\eL-\bbeNL}_{L^2(\Gamma)}^2\biggr]\notag\\
  \leq&C\int_{\Gamma}\zeta_\delta(\xx)\eG^2(\xx)+\dfrac{C}{\delta}\norm{r_{\Gamma}}_{L^2(\Gamma)}^2+C\int_{\ONL}\left|\nabla \eL(\xx)\right|^2\d\xx\notag\\
  &\hspace{3cm}+\dfrac{C}{2\delta^2}\int_{\ONL}\int_{\ONL}\Rd (\eNL(\xx)-\eNL(\yy))^2\d\xx\d\yy\notag\\
  \leq&C \hat{B}[\eL,\eNL,\eG;\eL,\eNL,\eG]+\dfrac{C}{\delta}\norm{r_{\Gamma}}_{L^2(\Gamma)}^2.\label{semi-coercivity}
\end{align}

In addition, from (\ref{error system}) we can get 
\[\eNL(\xx)=\dfrac{\delta^2}{\lambda_2w_\delta(\xx)}(r_{NL}(\xx)-\bar{f})+\bar{e}_{NL}(\xx)+\dfrac{\delta^2}{w_\delta(\xx)}\int_{\Gamma}\bRd \dfrac{\eG(\yy)}{\bar{w}_\delta(\yy)}\d S_\yy\]
The gradient of second term and the third term can be estimated in a same way like (\ref{nabla baru}) (\ref{I1}) and (\ref{I2}). The results are 
\begin{align}
  \norm{\nabla\bar{e}_{NL}}_{L^2(\ONL)}^2\leq \dfrac{C}{\delta^2}\int_{\ONL}\int_{\ONL}\Rd (\uNL(\xx)-\uNL(\yy))^2\d\xx\d\yy\label{nabla second term}
\end{align}
and  
\begin{align}
  \norm{\nabla\left(\dfrac{\delta^2}{w_\delta(\xx)}\int_{\Gamma}\bRd \dfrac{\eG(\yy)}{\bar{w}_\delta(\yy)}\d S_\yy\right)}_{L^2(\ONL)}^2\leq C\int_{\Gamma}\zeta_\delta(\xx)\eG(\xx)^2\d S_\xx.\label{nabla third term}
\end{align}
We next estimate the gradient of the first term. 
\begin{align*}
  \nabla \left(\dfrac{\delta^2}{\lambda_2w_\delta(\xx)}\left(r_{NL}(\xx)-\bar{f}\right)\right)
&=\dfrac{\delta^2}{\lambda_2\wdd(\xx)}\nabla r_{NL}(\xx)-\dfrac{\delta^2\nabla \wdd(\xx)}{\lambda_2\wdd^2(\xx)}(r_{NL}(\xx)-\bar{f})
\\&\overset{d}{=}K_1(\xx)+K_2(\xx).
\end{align*}
Then we have 
\begin{align}
  \norm{K_1(\xx)}_{L^2(\ONL)}^2&=\dfrac{\delta^4}{\lambda_2^2}\int_{\ONL}\dfrac{1}{\wdd^2(\xx)}\left(\nabla r_{in}(\xx)+\nabla r_{bl}(\xx)+\nabla r_{NL,2}(\xx)\right)^2\d\xx\notag\\
  &\leq C\delta^4\norm{\nabla r_{in}}_{L^2(\ONL)}^2+C\delta^4\norm{\nabla r_{in}}_{L^2(\ONL)}^2+C\delta^4\norm{\nabla r_{NL,2}}_{L^2(\ONL)}^2\notag\\
  &\leq C\delta^3\norm{u}_{H^3(\ONL)}^2
  \leq C\delta^3\norm{f}_{H^1(\Omega)}^2\label{K1}
\end{align}
and 
\begin{align}
  \norm{K_2(\xx)}_{L^2(\ONL)}^2&=\dfrac{\delta^4}{\lambda_2^2}\int_{\ONL}\dfrac{1}{\wdd^2(\xx)}\left(r_{NL,2}(\xx)-\bar{f}\right)^2\left(\int_{\ONL}R_\delta'(\xx,\yy)\dfrac{\xx-\yy}{2\delta^2}\d\yy\right)^2\d\xx\notag\\
  &\leq C\delta^2\norm{r_{NL,2}}_{L^2(\ONL)}^2+C\delta^2\left|\bar{f}\right|^2\leq C\delta^3\norm{f}^2_{H^1(\Omega)}.\label{K2}
\end{align}
Combining (\ref{K1})(\ref{K2})(\ref{nabla second term})(\ref{nabla third term}) and (\ref{semi-coercivity}), we get 
\begin{align}
&\norm{\eL}_{H^1(\OL)}^2+\norm{\eNL}_{H^1(\ONL)}^2+\delta\norm{\eG}_{L^2(\Gamma)}^2\notag\\
  \leq& C \hat{B}[\eL,\eNL,\eG;\eL,\eNL,\eG]+\dfrac{C}{\delta}\norm{r_\Gamma}_{L^2(\Gamma)}^2+C\delta^3\norm{f}_{H^1(\Omega)}^2.\label{eq: H1aid}
\end{align}
Furthermore, from the three equations in (\ref{error system}), 
\begin{align*}
    &\hat{B}[\eL,\eNL,\eG;\eL,\eNL,\eG]\\
  =&\int_{\ONL}(r_{NL}(\xx)-\bar{f})\eNL(\xx)\d\xx+\int_{\Gamma}r_{\Gamma}(\xx)\eG(\xx)\d\xx\notag\\
  =&\int_{\ONL}r_{in}(\xx)\eNL(\xx)\d\xx+\int_{\ONL}r_{bl}(\xx)\eNL(\xx)\d\xx+\int_{\ONL}r_{NL,2}(\xx)\eNL(\xx)\d\xx\notag\\
  &\hspace{3cm}-\int_{\ONL}\bar{f}\eNL(\xx)\d\xx+\int_{\ONL}r_{\Gamma}(\xx)\eG(\xx)\d\xx
\end{align*}
With the truncation error analysis in Proposition \ref{PROP:TRUNCATION ERROR INTERFACE} and Proposition \ref{prop: truncation error nonlocal} as well as the  estimation of $\bar{f}$ in Lemma \ref{LEMMA:F}, the bilinear form can be estimated as follows,
\begin{align}
  &\hat{B}[\eL,\eNL,\eG;\eL,\eNL,\eG]\notag\\
  \leq& \norm{r_{in}}_{L^2(\ONL)}\norm{\eNL}_{L^2(\ONL)}+C\delta\norm{u}_{H^3(\ONL)}\norm{\eNL}_{H^1(\ONL)}\notag\\
  &\hspace{0.5cm}+C\left|\bar{f}\right|\norm{\eNL}_{L^2(\ONL)}+\dfrac{1}{\sqrt{\delta}}\|r_{\Gamma}\|_{L^2(\Gamma)}\sqrt{\delta}\norm{\eG}_{L^2(\Gamma)}+\int_{\ONL}r_{NL,2}(\xx)\eNL(\xx)\d\xx\notag\\
  \leq& C(\epsilon)\left(\norm{r_{in}}_{L^2(\ONL)}^2+\delta^2\norm{u}_{H^3(\ONL)}^2+\delta^2\norm{f}_{H^1(\Omega)}^2+\dfrac{1}{\delta}\norm{r_{\Gamma}}_{L^2(\Gamma)}^2\right)\notag\\
  &\hspace{3cm}+\epsilon\left(\norm{\eNL}_{L^2(\ONL)}^2+\delta\norm{\eG}_{L^2(\Gamma)}^2\right)+\int_{\ONL}r_{NL,2}(\xx)\eNL(\xx)\d\xx.\notag\\
  \leq& C(\epsilon)\left(\delta^2\norm{u}_{H^3(\ONL)}^2+\delta^2\norm{f}_{H^1(\Omega)}^2+\delta^3\norm{u}_{H^3(\ONL)}^2\right)\notag\\
  &\hspace{2cm}+\epsilon\left(\norm{\eNL}_{H^1(\ONL)}^2+\delta\norm{\eG}_{L^2(\Gamma)}^2\right)+\int_{\ONL}r_{NL,2}(\xx)\eNL(\xx)\d\xx.\label{rightside}
\end{align}
We can further estimate 
\begin{align}
&\int_{\ONL}r_{NL,2}(\xx)\eNL(\xx)\d\xx\notag\\
=&\int_{\ONL}e_{NL}(\xx)\int_{\Gamma}\bRd\pd{u}{\nn}(\yy)\dfrac{2\bar{w}_\delta(\yy)-1}{\bar{w}_\delta(\yy)}\d S_\yy\d\xx\notag\\
=&\int_{\Gamma}\pd{u}{\nn}(\yy)\dfrac{2\bar{w}_\delta(\yy)-1}{\bar{w}_\delta(\yy)}\int_{\ONL}\bRd \eNL(\xx)\d\xx\d S_\yy\notag\\
=&\int_{\Gamma}\pd{u}{\nn}(\yy)(2\bar{w}_\delta(\yy)-1)\bbeNL(\yy)\d S_\yy\notag\\
\leq& C\int_{\Gamma}\left|\pd{u}{\nn}(\yy)\right|^2(2\bar{w}_\delta(\yy)-1)^2\d S_\yy+\epsilon \int_{\Gamma}\bbeNL^2(\yy)\d S_\yy\notag\\
\leq& C\delta^2\norm{u}^2_{H^2(\ONL)}+C\epsilon \norm{\bbeNL}^2_{H^1(\ONL)}.\notag\\
\leq& C\delta^2\norm{u}^2_{H^2(\ONL)}+C\epsilon\norm{\eNL}_{L^2(\ONL)}^2+C\epsilon\dfrac{1}{\delta^2}\int_{\ONL}\int_{\ONL}\Rd (\eNL(\xx)-\eNL(\yy))^2\d\xx\d\yy\label{extra error}
\end{align}
Here, Lemma \ref{lemma-1} is used in the last inequality. Taking $\epsilon$ small enough, we can get the following estimation about $\hat{B}[\eL,\eNL,\eG;\eL,\eNL,\eG]$ from (\ref{rightside}) and (\ref{extra error}), 
\begin{align}
  &\hat{B}[\eL,\eNL,\eG;\eL,\eNL,\eG]\notag\\
  \leq& C\delta^2\left(\norm{u}_{H^3(\ONL)}^2+\norm{f}_{H^1(\Omega)}^2\right)+C\epsilon\left(\norm{\eNL}_{H^1(\ONL)}^2+\delta\norm{\eG}_{L^2(\Gamma)}^2\right)\notag\\
  \leq& C\delta^2\norm{f}_{H^1(\Omega)}^2+C\epsilon\left(\norm{\eNL}_{H^1(\ONL)}^2+\delta\norm{\eG}_{L^2(\Gamma)}^2\right).\label{bilinear estimation}
\end{align}
Combining (\ref{eq: H1aid})(\ref{bilinear estimation}) and Proposition \ref{PROP:TRUNCATION ERROR INTERFACE}, we derive the following estimation 
\begin{align*}
\norm{\eL}_{H^1(\OL)}^2+\norm{\eNL}_{H^1(\ONL)}^2+\delta\norm{\eG}_{L^2(\Gamma)}^2\leq C\delta^2\norm{f}_{H^1(\Omega)}^2,
\end{align*}
which is exactly the final convergence result.

\section{Discussion and Conclusion}
In this paper, we proposed a local-nonlocal coupling model. By establishing some proper bilinear forms, the well-posedness of our model and 
a convergence to elliptic transmission problem in $H^1$ norm with order $O(\delta)$ have been proved. In addition, we notice that our model have no truncation error in 
local part. Hence, improving the accuracy of the nonlocal part may prompt a second order local-nonlocal coupling method. In fact, some works like \cite{zhang2021nonlocal} and \cite{meng2023maximum} have presented 
some second order nonlocal models, which is useful to design new coupling model. We will investigate this interesting problem in the future work.

\appendix

\section{Proof of Lemma \ref{LEMMA:F}}
\label{appendix:proof1}
There are two estimations about the constant $\bar{f}$ in Lemma \ref{LEMMA:F}. The first one is simple. Because $\bar{f}$ is introduced for 
\begin{align*}
	\int_{\OL}f(\xx)\d\xx+\int_{\ONL}\left(\int_{\ONL}\bRd f(\yy)\d\yy+\bar{f}\right)\d\xx=0.
\end{align*}
Thus,
\begin{align*}
	\left|f\right|&\leq \dfrac{1}{\left|\ONL\right|}\left(\int_\OL\left|f(\xx)\right|\d\xx+\int_{\ONL}\int_{\ONL}\bRd \left|f(\yy)\right|\d\yy\d\xx\right)\\
	&\leq C\norm{f}_{L^2(\OL)}+C\left(\int_{\ONL}\left(\int_{\ONL}\bRd \left|f(\yy)\right|\d\yy\right)^2\d\xx\right)^{\frac{1}{2}}\\
	&\leq C\norm{f}_{L^2(\OL)}+C\norm{f}_{L^2(\ONL)}\\
	&\leq C\norm{f}_{L^2(\Omega)}.
\end{align*}
The second estimation is more involved. Elliptic transmission problem (\ref{original system}) implies the compatibility condition 
\begin{equation*}
	\int_{\Omega}f(\xx)\d\xx=\int_{\OL}f(\xx)\d\xx+\int_{\ONL}f(\xx)\d\xx=0,
\end{equation*}
which gives 
\begin{align*}
	\bar{f}&=\dfrac{1}{\left|\ONL\right|}\left(-\int_{\OL}f(\xx)\d\xx-\int_{\ONL}\int_{\ONL}\bRd f(\yy)\d\yy\d\xx\right)\\
	&=\dfrac{1}{\left|\ONL\right|}\left(\int_{\ONL}f(\xx)\d\xx-\int_{\ONL}f(\yy)\int_{\ONL}\bRd \d\xx\d\yy\right)\\
	&=\dfrac{1}{\left|\ONL\right|}\left(\int_{\ONL}f(\xx)\left(1-\int_{\ONL}\bRd \d\yy\right)\d\xx\right)
\end{align*}
Let us denote $\Gamma_{2\delta}=\left\{\xx\in \ONL \big|\ d(\xx,\Gamma)<2\delta\right\}$. Notice that 
\begin{align*}
	1-\int_{\ONL}\bRd\d\yy=0,\quad \forall \xx\in (\ONL\backslash\Gamma_{2\delta}),
\end{align*}
we can further get 
\begin{align*}
	\left|\bar{f}\right|&=\left|\dfrac{1}{\left|\ONL\right|}\left(\int_{\Gamma_{2\delta}}f(\xx)\left(1-\int_{\ONL}\bRd \d\yy\right)\d\xx\right)\right|\leq C\int_{\Gamma_{2\delta}}\left|f(\xx)\right|\d\xx.
\end{align*}

To estimate the integral in $\Gamma_{2\delta}$, we need a special cover of $\Gamma_{2\delta}$. Due to the smoothness of $\Gamma$ (at least $C^2$), each $\xx\in \Gamma$ has a neighborhood $B(\xx,r_\xx)$ 
such that $\p \ONL\cap B(\xx,r_\xx)$ coincides with the graph of a $C^2$ function and $\ONL\cap B(\xx,r_\xx)$ is the epigraph of this function after necessary relabeling and reorienting. These properties
allow straightening out the boundary.

Since $\Gamma$ is compact and $\bigcup\limits_{\xx\in \Gamma} B(\xx,\frac{1}{3}r_\xx)$ is an open cover of $\Gamma$, there exists a finite sub-cover $\bigcup\limits_{i=1}^m B(\xx_i,\frac{1}{3}r_{\xx_i})$. Take $r=\min\limits_{1\leq i\leq m}r_{\xx_i}$ and $\delta<\frac{1}{6}r$, 
we can assert $\bigcup\limits_{i=1}^m B(\xx_i,\frac{2}{3}r_{\xx_i})$ is an open cover of $\Gamma_{2\delta}$. That is because for each $\yy\in \Gamma_{2\delta}$, there exists a $\yy_{0}\in \Gamma$ such that $d(\yy,\Gamma)=\left|\yy-\yy_0\right|$. Let $\yy_0\in B(\xx_i,\frac{1}{3}r_{\xx_i})$, then 
\begin{align*}
	\left|\yy-\xx_i\right|\leq \left|\yy-\yy_0\right|+\left|\yy_0-\xx_i\right|< 2\delta+\frac{1}{3}r_{\xx_i}<\dfrac{2}{3}r_{\xx_i}.
\end{align*}
Now we have 
\begin{align*}
	\int_{\Gamma_{2\delta}}\left|f(\xx)\right|\d\xx\leq\sum_{i=1}^{m}\int_{B(\xx_i,\frac{2}{3}r_{\xx_i})\cap \Gamma_{2\delta}}\left|f(\xx)\right|\d\xx.
\end{align*}

Furthermore, for each $\xx_i$, it is rational to assume $B(\xx_i,r_i)\cap \Gamma$ is flat and $B(\xx_i,r_i)\cap \ONL$ is upper hemisphere, otherwise we can straighten the boundary. 
When $f\in C^1(\overline{\ONL})\cap H^1(\Omega)$ specially,
\begin{align*}
	&\int_{B(\xx_i,\frac{2}{3}r_{\xx_i})\cap \Gamma_{2\delta}}\left|f(\xx)\right|\d\xx\\
	\leq&\int_{B(\xx_i,\frac{2}{3}r_{\xx_i})\cap \Gamma_{2\delta}}\int_0^{2\delta}\left|f(\xx',x_n)\right|\d x_n\d {\xx'}\\
	\leq&\int_{B(\xx_i,\frac{2}{3}r_{\xx_i})\cap \Gamma_{2\delta}}\int_0^{2\delta}\left|f(\xx',0)+\int_{0}^{x_n}\pd{f}{x_n}(\xx',t)\d t\right|\d x_n\d {\xx'}\\
	\leq&\int_{B(\xx_i,\frac{2}{3}r_{\xx_i})\cap \Gamma_{2\delta}}\int_0^{2\delta}\left|f(\xx',0)\right|\d x_n\d \xx'+\int_0^{x_n}\int_{B(\xx_i,\frac{2}{3}r_{\xx_i})\cap \Gamma_{2\delta}}\int_0^{2\delta}\left|\pd{f}{x_n}(\xx',t)\right|\d x_n\d {\xx'}\d t\\
	\leq& C_i\delta\int_{B(\xx_i,k_i\delta)\cap \Gamma_{2\delta}}\left|f(\xx)\right|\d S_{\xx}+C_i\delta \int_{B(\xx_i,k_i\delta)\cap \ONL}\left|\nabla f(\xx)\right|\d {\xx}\\
	\leq& C_i\delta\norm{f}_{L^2(\Gamma)}+C_i\delta\norm{\nabla f}_{L^2(\ONL)}\\
	\leq& C_i\delta \norm{f}_{H^1(\ONL)}
\end{align*}
Take $C=m\max\limits_{1\leq i\leq m} C_i$,  we can get 
\begin{align*}
	\int_{\Gamma_{2\delta}}\left|f(\xx)\right|\d\xx\leq \sum_{i=1}^m\int_{B(\xx_i,\frac{2}{3}r_{\xx_i})\cap \Gamma_{2\delta}}\left|f(\xx)\right|\d\xx\leq C\delta \norm{f}_{H^1(\ONL)}.
\end{align*}
In addition, because $C^1(\overline{\ONL})$ is dense in $H^1(\ONL)$, the inequality above holds for $f\in H^1(\Omega)$. Now we can conclude 
\begin{align*}
	\left|\bar{f}\right|\leq C\delta \norm{f}_{H^1(\ONL)}\leq C\delta\norm{f}_{H^1(\Omega)}.
\end{align*} 

\section{Proof of Proposition \ref{PROP:TRUNCATION ERROR INTERFACE}}
\label{appendix:proof2}
For $u\in H^1(\Omega)\cap H^3(\OL)\cap H^3(\ONL)$, we define 
\begin{align*}
	\hat{r}_{\Gamma}(\xx)=\int_{\ONL}\bRd(u(\yy)-u(\xx))\d\yy+2\delta^2\pd{u}{\nn}(\xx)\int_{\Gamma}\bbRd \d S_\yy
\end{align*}
Noticing that 
\begin{align*}
	r_{\Gamma}(\xx)=\dfrac{1}{\wdd(\xx)}\hat{r}_{\Gamma}(\xx),
\end{align*}
if we can prove $\norm{\hat{r}_{\Gamma}}_{L^2(\Gamma)}\leq C\delta^2\norm{u}_{H^3(\ONL)}$, we can get 
\begin{align*}
	\norm{{r}_{\Gamma}}_{L^2(\Gamma)}=\left(\int_{\Gamma} \dfrac{1}{\wdd^2(\xx)}\hat{r}_{\Gamma}^2(\xx)\d\xx\right)^{\frac{1}{2}}\leq C\norm{\hat{r}_{\Gamma}}_{L^2(\Gamma)}\leq C\delta^2\norm{u}_{H^3(\ONL)}.
\end{align*}

We next estimate $\norm{\hat{r}_{\Gamma}}_{L^2(\ONL)}$. When $u\in C^3(\overline{\ONL})$, we have 
\begin{align*}
	&\int_{\ONL}\bRd(u(\yy)-u(\xx))\d\yy\\
	=&\int_{\ONL}\bRd(u(\yy)-u(\xx)-\nabla u(\xx)\cdot(\yy-\xx))\d\yy+\int_{\ONL}\bRd\left(\nabla u(\xx)\cdot(\yy-\xx)\right)\d\yy\\
	=&\int_{\ONL}\bRd(u(\xx)-u(\yy)-\nabla u(\xx)\cdot(\yy-\xx))\d\yy-2\delta^2\nabla u(\xx)\int_{\ONL}\nabla\bbRd\d\yy\\
	=&\int_{\ONL}\bRd(u(\xx)-u(\yy)-\nabla u(\xx)\cdot(\yy-\xx))\d\yy-2\delta^2\int_\Gamma \bbRd\nabla u(\xx)\cdot\nn(\yy)\d S_\yy.
\end{align*}
Thus,
\begin{align*}
	\hat{r}(\xx)&=\int_{\ONL}\bRd(u(\yy)-u(\xx)-\nabla u(\xx)\cdot(\yy-\xx))\d\yy+2\delta^2\int_{\Gamma}\bbRd \nabla u(\xx)\cdot(\nn(\xx)-\nn(\yy))\d S_\yy.
\end{align*}
The second term can be estimated as 
\begin{align*}
	&\int_\Gamma\left(2\delta^2\int_{\Gamma}\bbRd \nabla u(\xx)\cdot(\nn(\xx)-\nn(\yy))\d S_\yy\right)^2\d S_\xx\\
	\leq&C\delta^6\int_{\Gamma}\left(\int_{\Gamma}\bbRd \left|\nabla u(\xx)\right|\d S_\yy\right)^2\d S_\xx\\
	\leq&C\delta^6\int_{\Gamma}\left|\nabla u(\xx)\right|^2\left(\int_{\Gamma}\bbRd\d S_\yy\right)^2\d S_\xx\\
	\leq&C\delta^4\norm{\nabla u}_{L^2(\Gamma)}^2
	\leq C\delta^4\norm{u}_{H^3(\ONL)}^2.
\end{align*}
For the first term,
\begin{align*}
	&u(\yy)-u(\xx)-\sum_{i=1}^n\nabla^i u(\xx)(y_i-x_i)\\
	=&\int_0^1\dfrac{\d}{\d s_1}(u(\xx+s_1(\yy-\xx)))\d s_1-\sum_{i=1}^n\nabla^i u(\xx)(y_i-x_i)\\
	=&\sum_{i=1}^n\int_0^1\left(\nabla^i u(\xx+s_1(\yy-\xx))-\nabla^i u(\xx)\right)(y_i-x_i)\d s_1\\
	=&\sum_{i=1}^n\int_0^1\int_0^1\dfrac{\d}{\d s_2}\nabla^i u(\xx+s_2s_1(\yy-\xx))\d s_2\d s_1(y_j-x_j)\\
	=&\sum_{i,j=1}^n\int_0^1\int_0^1\nabla^i\nabla^j u(\xx+s_2s_1(\yy-\xx))s_1\d s_2\d s_1(y_i-x_i)(y_j-x_j),
\end{align*} 
Noticing that, with $\mathbf{z} = \xx + s(\yy-\xx)$,
\begin{align*}
	&\delta^4\int_{\Gamma}\int_{\ONL}\bRd\left|\nabla^i\nabla^j u(\xx+s(\yy-\xx))\right|^2\d\yy\d S_\xx\\
	\leq &\delta^4\int_{\Gamma}\int_{\ONL} \delta^{-n}\bar{R}\left(\frac{|\xx-\mathbf{z}|^2}{4s^2\delta^2}\right)\left|\nabla^i\nabla^j u(\mathbf{z})\right|^2\frac{1}{s^n}\d \mathbf{z}\d S_\xx\\
	=&\delta^4\int_{\Gamma}\int_{\ONL} \bar{R}_{s\delta}(\xx,\mathbf{z})\left|\nabla^i\nabla^j u(\mathbf{z})\right|^2\d \mathbf{z}\d S_\xx\\
	=&\delta^4\int_{\Gamma}\int_{\Gamma_{2s\delta}}\bar{R}_{s\delta}(\xx,\mathbf{z})\left|\nabla^i\nabla^j u(\mathbf{z})\right|^2\d \mathbf{z}\d S_\xx\\
	\leq&\delta^4\frac{1}{s\delta}\int_{\Gamma_{2s\delta}}\left|\nabla^i\nabla^j u(\mathbf{z})\right|^2\d \mathbf{z}\\
	\leq&\delta^4 \norm{u}_{H^3(\ONL)}.
\end{align*}
In the last inequality, we used an estimation
\begin{align*}
	\norm{u}_{L^2(\Gamma_{\sigma})}\leq C \sigma \norm{u}_{H^1(\ONL)}, \quad \text{for } u\in H^1(\ONL),\sigma<<1,
\end{align*}
which can be proved with a same method in Appendix \ref{appendix:proof1}.
Now we can get 
\begin{align*}
	\norm{\int_{\ONL}\bRd(u(\xx)-u(\yy)-\nabla u(\xx)\cdot(\yy-\xx))\d\yy}_{L^2(\ONL)}\leq C\delta^2\norm{u}_{H^3(\ONL)}.
\end{align*}
Combine these results, we conclude
\begin{align*}
	\norm{\hat{r}_\Gamma}_{L^2(\Gamma)}\leq C\delta^2\norm{u}_{H^3(\ONL)}.
\end{align*}
Because $C^3(\overline{\ONL})$ is dense in $H^3(\ONL)$, and we can verify the map $T: u\rightarrow \norm{\hat{r}_\Gamma}_{L^2(\Gamma)}$ is continuous in $H^3(\ONL)$,
this estimation also holds for $u\in H^3(\ONL)$.
\begin{remark}
	When we estimate the first term, we have in fact assume $\ONL$ is convex to help $\xx+s(\yy-\xx)$ make sense. In fact, this assumption is not indispensable.
	We can adopt the idea of parametrization introduced in \cite{shi2017convergence} to get the local convexity in the parameter space. Here we omit that complicated parametrization to make our 
	proof highlight the essence.
\end{remark}

\section{Proof of Lemma \ref{INTEGRAL ERROR}}
\label{appendix:proof3}
In section \ref{sec:convergence proof}, Lemma \ref{INTEGRAL ERROR} is stated following the notations in the context. Here we prove this lemma in a general bounded open region $U\subset \RR^n$ with a $C^2$ boundary $\partial U$.
\begin{figure}[h]
	\centering
	\begin{tikzpicture}[scale=2.3]
		\draw[->] (-2.6,0) -- (2.5,0) node[right] {$\RR^{n-1}$};
		\draw[->] (0,-0.4) -- (0,1.2) node[above] {$x^{(n)}$};
		
		\draw[domain=-2.6:2.5,smooth,variable=\x,black] plot ({\x},{0.2*\x*\x});
		
		\draw[domain=-2.6:-0.5,smooth,variable=\x,black] plot ({\x},{-0.8*\x-0.8});
		
		\filldraw[black] (-2, 0.8) circle (0.5pt) node[above right] {};
		\draw[black] (-2, 0.8) circle (0.6); 
		
		\begin{scope}
			\clip (-2,0.8) circle (0.6);
			\fill[red, opacity=2] 
			plot[domain=-2.5:2.5] (\x, {-0.8*\x-0.8}) --
			plot[domain=2.5:-2.5] (\x, {0.2*\x*\x}) -- cycle;
		\end{scope}
		\node at (1, 1) {$U$};
		\node at (2.3, 0.5) {$\varphi(x^{(1)},\cdots,x^{(n-1)})$};
		\node at (-2.2,0.6) {$(\xx',\varphi(\xx'))$};
		\node[below right] at (0,0) {$\xx_0$};
	\end{tikzpicture}
	\caption{A zoomed-in view of a neighborhood of $\xx_0$. In this region, the boundary of $U$ is the graph of a function $\varphi$ under a coordinate frame with origin $\xx_0$.
		The difference between the integral and $\frac{1}{2}$ in Lemma \ref{INTEGRAL ERROR} is exactly the volume of the red region.}
	\label{local coordinate frame}
\end{figure}
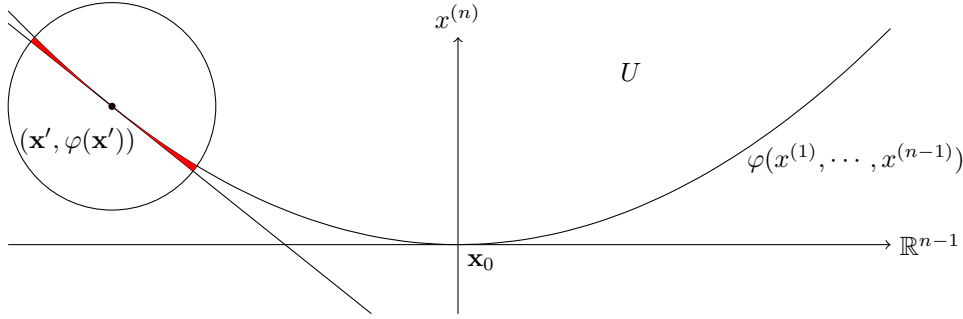

Since the boundary $\p U$ is $C^2$, at each point $\xx\in \p U$, there exists a positive constant $r_x$ and a $C^2$ function $\varphi:\RR^{n-1}\rightarrow \RR$ such that 
\begin{align*}
	B(\xx,r_\xx)\cap U=\left\{\zz\in B(\xx,r_\xx)\big|z_n>\varphi(z^{(1)},\cdots,z^{(n-1)})\right\}
\end{align*}
after relabeling and reorienting the coordinate frame if necessary. Now $\bigcup_{\xx\in\p U} B(\xx,r_\xx) $ is an open cover of $\p U$. 
Because $\p U$ is compact, we can find a finite sub-cover $\bigcup_{i=0}^m B(\xx_i,r_{\xx_i})$. In addition, Lebesgue's number lemma ensures a constant $\sigma>0$, such that for arbitrary $\xx\in \p U$, $B(\xx,\sigma)$ contains in some elements of this cover. 

Now we fix $\delta<\frac{1}{2}\sigma$. As shown in Figure \ref{local coordinate frame}, for $\xx\in \p U$, we assume $B(\xx,2\delta)\subset B(\xx_0,r_{\xx_0})$. In the local coordinate frame with origin $\xx_0$, the coordinate of $\xx$ is $(\xx',\varphi(\xx'))$, 
where the first $n-1$ coordinate components are abbreviated as $\xx'$. Additionally, the tangent hyperplane at $\xx$ splits $\RR^n$ into two parts $H^+$ and $H^-$. It doesn't matter to assume $(B(\xx,2\delta)\cap U) \subset H^+$.
Because $\bar{R}$ has compact support and the normalization (\ref{kernel normalization}), we have
\begin{align*}
	\left|\int_{U}\bRd\d\yy-\frac{1}{2}\right|
	=&\left|\int_{U}\bRd\d\yy-\int_{B(\xx,2\delta)\cap H^+}\bRd\d\yy\right|\\
	=&\left|\int_{B(\xx,2\delta)}\bRd \chi_{U}(\yy)\d\yy-\int_{B(\xx,2\delta)}\bRd\chi_{H^+}(\yy)\d\yy\right|\\
	\leq&\int_{B(\xx,2\delta)}\bRd\left|\chi_{U}(\yy)-\chi_{H^+}(\yy)\right|\d\yy\\
	\leq&C\delta^{-n}\int_{B(\xx,2\delta)}\left|\chi_{U}(\yy)-\chi_{H^+}(\yy)\right|\d\yy\\
	=&C\delta^{-n}\left|(B(\xx,2\delta)\cap H^+)-(B(\xx,2\delta)\cap U)\right|.
\end{align*}  
In Figure \ref{local coordinate frame}, $(B(\xx,2\delta)\cap H^+)-(B(\xx,2\delta)\cap U)$ is exactly the region filled with red. To estimate its measurement, 
we only need the height from $\p U$ to the tangent hyperplane. The normal direction at $\xx$ can be calculated as 
\begin{align*}
	\mathbf{v}=\left(\pd{\varphi}{x^{(1)}}(\xx'),\cdots,\pd{\varphi}{x^{(n-1)}}(\xx'),-1\right)^T.
\end{align*}
Hence, for $\yy\in B(\xx,2\delta)\cap \p U$, the projection of $\yy-\xx$ to $\mathbf{v}$ can be estimated as 
\begin{align*}
	\left|P_{\mathbf{v}}(\yy-\xx)\right|
	=&\left|\left\langle\yy-\xx,\frac{\mathbf{v}}{\left|\mathbf{v}\right|}\right\rangle\right|\\
	=&\frac{1}{|\mathbf{v}|}\left|\sum_{i=1}^{n-1}\pd{\varphi}{x^{(i)}}(\xx')(y^{(i)}-x^{(i)})-\left(\varphi(y^{(1)}\cdots,y^{(n-1)})-\varphi(x^{(1)}\cdots,x^{(n-1)})\right)\right|\\
	=&\frac{1}{\left|\mathbf{v}\right|}\left|\sum_{i,j=1}^{n-1}\dfrac{1}{2}\pd{^2\varphi}{x^{(i)}\p x^{(j)}}(\xi')(y^{(i)}-x^{(i)})(y^{(j)}-x^{(j)})\right|\\
	\leq& C(\xx_0,r_{\xx_0})\delta^2
\end{align*}
Here, the constant $C(\xx_0,r_{\xx_0})$ depends on the second order derivatives of $\varphi$ in $B(\xx_0,r_{\xx_0})$. Furthermore, because in $B(\xx,2\delta)$, the measurement on the tangent hyperplane is $O(\delta^{n-1})$,
we conclude 
\begin{align*}
	\left|\int_{U}\bRd\d\yy-\frac{1}{2}\right|\leq C(\xx_0,r_{\xx_0})\delta^{-n}\delta^2\delta^{n-1}=C(\xx_0,r_{\xx_0})\delta.
\end{align*} 
Take $C=\max\limits_{0\leq i\leq m} C(\xx_i,r_{\xx_i})$, we can get the required estimation
\begin{align*}
	\left|\int_{U}\bRd\d\yy-\frac{1}{2}\right|\leq C\delta.
\end{align*}

\bibliographystyle{abbrv}

\bibliography{ref}

\begin{thebibliography}{10}

\bibitem{askari2008peridynamics}
E.~Askari, F.~Bobaru, R.~Lehoucq, M.~Parks, S.~Silling, and O.~Weckner.
\newblock Peridynamics for multiscale materials modeling.
\newblock In {\em Journal of Physics: Conference Series}, volume 125, page
  012078. IOP Publishing, 2008.

\bibitem{bobaru2016handbook}
F.~Bobaru, J.~T. Foster, P.~H. Geubelle, and S.~A. Silling.
\newblock {\em Handbook of peridynamic modeling}.
\newblock CRC press, 2016.

\bibitem{bucur2016nonlocal}
C.~Bucur, E.~Valdinoci, et~al.
\newblock {\em Nonlocal diffusion and applications}, volume~20.
\newblock Springer, 2016.

\bibitem{du2012analysis}
Q.~Du, M.~Gunzburger, R.~B. Lehoucq, and K.~Zhou.
\newblock Analysis and approximation of nonlocal diffusion problems with volume
  constraints.
\newblock {\em SIAM review}, 54(4):667--696, 2012.

\bibitem{du2015integral}
Q.~Du, R.~B. Lehoucq, and A.~M. Tartakovsky.
\newblock Integral approximations to classical diffusion and smoothed particle
  hydrodynamics.
\newblock {\em Computer Methods in Applied Mechanics and Engineering},
  286:216--229, 2015.

\bibitem{du2018quasi}
Q.~Du, X.~H. Li, J.~Lu, and X.~Tian.
\newblock A quasi-nonlocal coupling method for nonlocal and local diffusion
  models.
\newblock {\em SIAM Journal on Numerical Analysis}, 56(3):1386--1404, 2018.

\bibitem{d2021formulation}
M.~D’Elia and P.~Bochev.
\newblock Formulation, analysis and computation of an optimization-based
  local-to-nonlocal coupling method.
\newblock {\em Results in Applied Mathematics}, 9:100129, 2021.

\bibitem{d2016coupling}
M.~D’Elia, M.~Perego, P.~Bochev, and D.~Littlewood.
\newblock A coupling strategy for nonlocal and local diffusion models with
  mixed volume constraints and boundary conditions.
\newblock {\em Computers \& Mathematics with Applications}, 71(11):2218--2230,
  2016.

\bibitem{han2012coupling}
F.~Han and G.~Lubineau.
\newblock Coupling of nonlocal and local continuum models by the arlequin
  approach.
\newblock {\em International Journal for Numerical Methods in Engineering},
  89(6):671--685, 2012.

\bibitem{han2016morphing}
F.~Han, G.~Lubineau, Y.~Azdoud, and A.~Askari.
\newblock A morphing approach to couple state-based peridynamics with classical
  continuum mechanics.
\newblock {\em Computer methods in applied mechanics and engineering},
  301:336--358, 2016.

\bibitem{li2017quasi}
X.~H. Li and J.~Lu.
\newblock Quasi-nonlocal coupling of nonlocal diffusions.
\newblock {\em SIAM Journal on Numerical Analysis}, 55(5):2394--2415, 2017.

\bibitem{li2017point}
Z.~Li, Z.~Shi, and J.~Sun.
\newblock Point integral method for solving poisson-type equations on manifolds
  from point clouds with convergence guarantees.
\newblock {\em Communications in Computational Physics}, 22(1):228--258, 2017.

\bibitem{lubineau2012morphing}
G.~Lubineau, Y.~Azdoud, F.~Han, C.~Rey, and A.~Askari.
\newblock A morphing strategy to couple non-local to local continuum mechanics.
\newblock {\em Journal of the Mechanics and Physics of Solids},
  60(6):1088--1102, 2012.

\bibitem{meng2023maximum}
Y.~Meng and Z.~Shi.
\newblock Maximum principle preserving nonlocal diffusion model with dirichlet
  boundary condition.
\newblock {\em arXiv e-prints}, pages arXiv--2310, 2023.

\bibitem{osher2017low}
S.~Osher, Z.~Shi, and W.~Zhu.
\newblock Low dimensional manifold model for image processing.
\newblock {\em SIAM Journal on Imaging Sciences}, 10(4):1669--1690, 2017.

\bibitem{oterkus2012peridynamic}
E.~Oterkus and E.~Madenci.
\newblock Peridynamic analysis of fiber-reinforced composite materials.
\newblock {\em Journal of Mechanics of Materials and Structures}, 7(1):45--84,
  2012.

\bibitem{seleson2013force}
P.~Seleson, S.~Beneddine, and S.~Prudhomme.
\newblock A force-based coupling scheme for peridynamics and classical
  elasticity.
\newblock {\em Computational Materials Science}, 66:34--49, 2013.

\bibitem{seleson2015concurrent}
P.~Seleson, Y.~D. Ha, and S.~Beneddine.
\newblock Concurrent coupling of bond-based peridynamics and the navier
  equation of classical elasticity by blending.
\newblock {\em International Journal for Multiscale Computational Engineering},
  13(2), 2015.

\bibitem{shi2017weighted}
Z.~Shi, S.~Osher, and W.~Zhu.
\newblock Weighted nonlocal laplacian on interpolation from sparse data.
\newblock {\em Journal of Scientific Computing}, 73(2):1164--1177, 2017.

\bibitem{shi2017convergence}
Z.~Shi and J.~Sun.
\newblock Convergence of the point integral method for laplace-beltrami
  equation on point cloud.
\newblock {\em Research in the Mathematical Sciences}, 4(1):1--39, 2017.

\bibitem{silling2010crack}
S.~A. Silling, O.~Weckner, E.~Askari, and F.~Bobaru.
\newblock Crack nucleation in a peridynamic solid.
\newblock {\em International Journal of Fracture}, 162(1):219--227, 2010.

\bibitem{tao2018nonlocal}
Y.~Tao, Q.~Sun, Q.~Du, and W.~Liu.
\newblock Nonlocal neural networks, nonlocal diffusion and nonlocal modeling.
\newblock {\em Advances in Neural Information Processing Systems}, 31, 2018.

\bibitem{tao2019nonlocal}
Y.~Tao, X.~Tian, and Q.~Du.
\newblock Nonlocal models with heterogeneous localization and their application
  to seamless local-nonlocal coupling.
\newblock {\em Multiscale Modeling \& Simulation}, 17(3):1052--1075, 2019.

\bibitem{tian2017trace}
X.~Tian and Q.~Du.
\newblock Trace theorems for some nonlocal function spaces with heterogeneous
  localization.
\newblock {\em SIAM Journal on Mathematical Analysis}, 49(2):1621--1644, 2017.

\bibitem{vazquez2012nonlinear}
J.~L. V{\'a}zquez.
\newblock Nonlinear diffusion with fractional laplacian operators.
\newblock In {\em Nonlinear partial differential equations}, pages 271--298.
  Springer, 2012.

\bibitem{wang2023nonlocal}
T.~Wang and Z.~Shi.
\newblock A nonlocal diffusion model with $ h^{1}$ convergence for dirichlet
  boundary.
\newblock {\em Commun. Math. Sci.}, 22(7):1863--1896, 2024.

\bibitem{wang2018non}
X.~Wang, R.~Girshick, A.~Gupta, and K.~He.
\newblock Non-local neural networks.
\newblock In {\em Proceedings of the IEEE conference on computer vision and
  pattern recognition}, pages 7794--7803, 2018.

\bibitem{wang2019concurrent}
X.~Wang, S.~S. Kulkarni, and A.~Tabarraei.
\newblock Concurrent coupling of peridynamics and classical elasticity for
  elastodynamic problems.
\newblock {\em Computer methods in applied mechanics and engineering},
  344:251--275, 2019.

\bibitem{weinan2006uniform}
E.~Weinan, J.~Lu, and J.~Z. Yang.
\newblock Uniform accuracy of the quasicontinuum method.
\newblock {\em Physical Review B}, 74(21):214115, 2006.

\bibitem{you2020asymptoticallycoupling}
H.~You, Y.~Yu, and D.~Kamensky.
\newblock An asymptotically compatible formulation for local-to-nonlocal
  coupling problems without overlapping regions.
\newblock {\em Computer Methods in Applied Mechanics and Engineering},
  366:113038, 2020.

\bibitem{yu2018partitioned}
Y.~Yu, F.~F. Bargos, H.~You, M.~L. Parks, M.~L. Bittencourt, and G.~E.
  Karniadakis.
\newblock A partitioned coupling framework for peridynamics and classical
  theory: analysis and simulations.
\newblock {\em Computer Methods in Applied Mechanics and Engineering},
  340:905--931, 2018.

\bibitem{zhang2021nonlocal}
Y.~Zhang and Z.~Shi.
\newblock A nonlocal model of elliptic equation with jump coefficients on
  manifold.
\newblock {\em Communications in Mathematical Sciences}, 19(7):1881--1912,
  2021.

\end{thebibliography}

\end{document}